\documentclass[a4paper,12pt,leqno]{amsart}
\usepackage{latexsym}
\usepackage[all]{xy}

\usepackage{amssymb} 
\usepackage{amsmath} 
\usepackage{color}
\usepackage{comment}
\usepackage{mathtools}

\usepackage{graphicx}
\usepackage{stmaryrd}

\definecolor{gray}{gray}{0.7}
\definecolor{Gray}{gray}{0.3}

\textwidth=16cm
\textheight=23cm
\topmargin=0.5cm
\oddsidemargin=0.0cm
\evensidemargin=0.0cm

\usepackage{amscd}

\numberwithin{equation}{section}

\theoremstyle{break}
 \newtheorem{theorem}{Theorem}[section]
 \newtheorem{proposition}[theorem]{Proposition}
 \newtheorem{corollary}[theorem]{Corollary}
 \newtheorem{lemma}[theorem]{Lemma}

 \theoremstyle{definition}
 \newtheorem{definition}[theorem]{Definition}
 \newtheorem{remark}[theorem]{Remark}
 \newtheorem{example}[theorem]{Example}

\allowdisplaybreaks[3]

\def\C{\mathbb C}
\def\R{\mathbb R}

\def\Z{\mathbb Z}

\def\CR{\mathcal{R}}

\DeclareMathOperator{\Poin}{Poin}
\DeclareMathOperator{\Hess}{Hess}
\DeclareMathOperator{\point}{pt}

\DeclareMathOperator{\height}{ht}

\def\t{t}

\def\e{e}
\def\x{x}
\def\m{m}
\def\Hilb{F}

\begin{document}
  
\title[An additive basis for the cohomlogy of Hessenberg varieties]{An additive basis for the cohomology rings of regular nilpotent Hessenberg varieties}
\author [M. Enokizono]{Makoto Enokizono}
\address{Department of Mathematics, Faculty of Science and Technology, 
Tokyo University of Science, 
2641 Yamazaki, Noda, Chiba 278-8510, Japan}
\email{enokizono\_makoto@ma.noda.tus.ac.jp}

\author [T. Horiguchi]{Tatsuya Horiguchi}
\address{National institute of technology, Ube college, 2-14-1, Tokiwadai, Ube, Yamaguchi, Japan 755-8555}
\email{tatsuya.horiguchi0103@gmail.com}

\author [T. Nagaoka]{Takahiro Nagaoka}
\address{Research Institute for Mathematical Sciences, Kyoto University, Kyoto 606-8502, Japan}
\email{takahiro.nagaoka3617@gmail.com}

\author [A. Tsuchiya]{Akiyoshi Tsuchiya}
\address{Department of Information Science, Faculty of Science, Toho University, Miyama, Funabashi-shi, Chiba, 274-8510, Japan}
\email{akiyoshi@is.sci.toho-u.ac.jp}

\keywords{flag varieties, Hessenberg varieties, Poincar\'e dual.} 

\begin{abstract}
In this paper we construct an additive basis for the cohomology ring of a regular nilpotent Hessenberg variety which is obtained by extending all Poincar\'e duals of smaller regular nilpotent Hessenberg subvarieties.
In particular, all of the Poincar\'e duals of smaller regular nilpotent Hessenberg subvarieties are linearly independent. 
\end{abstract}

\maketitle

\setcounter{tocdepth}{1}

\tableofcontents

\section{Introduction}
\label{sec:introduction}

Hessenberg varieties are subvarieties of a full flag variety, which are introduced by De~Mari--Procesi--Shayman (\cite{dMPS, dMS}).
Their topology makes connections with other research areas such as the logarithmic derivation modules in hyperplane arrangements and Stanley's chromatic symmetric functions in graph theory (see e.g. the survey article \cite{AH}).

In this paper, we consider regular nilpotent Hessenberg varieties $\Hess(N,I)$ where $N$ is a regular nilpotent element and $I$ is a lower ideal (see Section~\ref{section:Hessenberg varieties} for the definitions).
They can be regarded as a (discrete) family of subvarieties of the flag variety connecting the Peterson variety and the flag variety itself. 
Here, the Peterson variety arises in the study of the quantum cohomology
of the flag variety (\cite{Ko, R}).
An explicit presentation by generators and relations of the cohomology ring\footnote{In this paper, unless stated otherwise, we work with singular cohomology with coefficients in $\R$.} of the Peterson variety is given by \cite{FHM} in type $A$, and soon after is given by \cite{HHM} in all Lie types. 
Then, the results in the special case of Peterson varieties can be generalized to arbitrary regular nilpotent Hessenberg varieties (\cite{AHHM, AHMMS, EHNT1}).

Our result gives an explicit additive basis for the cohomology ring of a regular nilpotent Hessenberg variety in classical types and type $G$ using an explicit presentation of the cohomology ring.
This basis is motivated as follows. 
The flag variety admits a complex cellular decomposition by the Schubert cells, which implies that the Poincar\'e duals of Schubert varieties in the flag variety form an additive basis for the cohomology of the flag variety. 
More generally, given a Schubert variety $X_w$, the homology classes of smaller Schubert varieties $X_{w'}$ in $X_w$  ($w' \leq w$ in Bruhat order)  form an additive basis for the homology of the given Schubert variety $X_w$.
Motivated by this result, we can also ask related questions:

\begin{enumerate}
\item Given a regular nilpotent Hessenberg variety $\Hess(N,I)$, is the set of Poincar\'e duals of smaller regular nilpotent Hessenberg subvarieties $\Hess(N,I')$ in $\Hess(N,I)$ ($I' \subset I$ in a natural inclusion) linearly independent? 
\item If the question $(1)$ is true, then can we find a ``natural'' basis $\mathcal{B}$ for the cohomology of $\Hess(N,I)$ so that $\mathcal{B}$ extends the set of Poincar\'e duals of smaller regular nilpotent Hessenberg subvarieties $\Hess(N,I')$ in $\Hess(N,I)$? 
Here, we say that a basis $\mathcal{B}$ of $H^*(\Hess(N,I))$ extends the set of Poincar\'e duals
of $\Hess(N,I')$ for $I' \subset I$ if $[\Hess(N,I')] \in \mathcal{B}$ for all $I' \subset I$.
\end{enumerate}
In the question $(1)$ above, the Poincar\'e dual of $\Hess(N,I')$ can be written as certain monomial in positive roots.
From this fact, we can expect a natural basis for the cohomology of $\Hess(N,I)$ whose elements are monomials in positive roots in the question $(2)$.

Our first main result is to construct such a basis. In order to describe our basis, we need a notion of Hessenberg functions $h$. The Hessenberg function $h$ uniquely determines a lower ideal $I$, but we remark that this notion depends on a choice of a decomposition $\Phi^+=\coprod_{i=1}^n \ \Phi^+_i$ of the set of positive roots $\Phi^+$. (See Section~\ref{section:Gysin map} for more details.) Here and below, we choose some decomposition $\Phi^+=\coprod_{i=1}^n \ \Phi^+_i$ described in Sections~\ref{section:Main theorem} and \ref{section:typeD}, and $\Hess(N,I)$ is denoted by $\Hess(N,h)$ where $h$ is the Hessenberg function which corresponds to the lower ideal $I$.
We now describe the main result of this paper. 
All undefined terms are specified in the sections below.

\begin{theorem} \label{theorem_intro}
Let $\Hess(N,h)$ be a regular nilpotent Hessenberg variety of types $A, B, C, G$ of rank $n$.
We fix a permutation $w^{(i)}$ on a set $\{i+1, i+2, \dots, h(i)\}$ for each $i=1,\ldots,n$.
Then, the following cohomology classes 
\begin{equation} \label{eq:basis_Intro}
\prod_{i=1}^{n} \alpha_{i,w^{(i)}(h(i))} \cdot \alpha_{i,w^{(i)}(h(i)-1)} \cdots \alpha_{i,w^{(i)}(h(i)-\m_i+1)},
\end{equation}
with $0 \leq \m_i \leq h(i)-i$, form an additive basis for the cohomology $H^*(\Hess(N,h))$ over $\R$.
Here, we take the convention $\alpha_{i,w^{(i)}(h(i))} \cdot \alpha_{i,w^{(i)}(h(i)-1)} \cdots \alpha_{i,w^{(i)}(h(i)-\m_i+1)}=1$ whenever $\m_i=0$.
\end{theorem}

The $\alpha_{i,j}$ in \eqref{eq:basis_Intro} denotes a positive root (see Section~\ref{section:Main theorem} for the definition).
The basis in Theorem~\ref{theorem_intro} is, in fact, an extended basis of all Poincar\'e duals $[\Hess(N,h')]$ of smaller regular nilpotent Hessenberg varieties $\Hess(N,h')$ in $H^*(\Hess(N,h))$ ($h' \subset h$ in a natural inclusion, i.e., $h'(i) \leq h(i)$ for any $i=1,\ldots,n$).
More specifically, take the identity maps as permutations $w^{(i)}$ in Theorem~\ref{theorem_intro}.
Then we will see that the cohomology class in \eqref{eq:basis_Intro} for $m_i=h(i)-h'(i) \ (1 \leq i \leq n)$ is the Poincar\'e dual $[\Hess(N,h')]$ in $H^*(\Hess(N,h))$ up to a non-zero scalar multiplication in Section~\ref{section:Gysin map}.
Hence, Theorem~\ref{theorem_intro} gives an affirmative answer to the questions $(1)$ and $(2)$ for types $A,B,C,G$.

It is technically difficult to construct a positive root basis for the cohomology of $\Hess(N,h)$ in type $D$.
Nevertheless we are able to prove the linear independence of the set of Poincar\'e duals of smaller regular nilpotent Hessenberg varieties $\Hess(N,h')$ by constructing a different type of a basis as follows.

\begin{theorem} \label{theorem_intro_typeD}
Let $\Hess(N,h)$ be a regular nilpotent Hessenberg variety of type $D$ of rank $n$.
Then, the following cohomology classes 
\begin{equation} \label{eq:basis_Intro_typeD}
\prod_{i=1}^{n} \alpha_{i,h(i)}^{(h-m)} \cdot \alpha_{i,h(i)-1}^{(h-m)} \cdots \alpha_{i,h(i)-m_i+1}^{(h-m)},
\end{equation}
with $0 \leq \m_i \leq h(i)-i$, form an additive basis for the cohomology $H^*(\Hess(N,h))$ over $\R$.
Here, we take the convention $ \alpha_{i,h(i)}^{(h-m)} \cdot \alpha_{i,h(i)-1}^{(h-m)} \cdots \alpha_{i,h(i)-m_i+1}^{(h-m)}=1$ whenever $\m_i=0$.
\end{theorem}

Note that $\alpha^{(h-m)}_{i,j}$ in \eqref{eq:basis_Intro_typeD} is not necessarily a positive root (see Section~\ref{section:typeD} for the definition), but it is a positive root if we take $m_i=h(i)-h'(i)$ for $1 \leq i \leq n$ (Lemma~\ref{lemma:linear independence_typeD}) where $h'$ denotes a Hessenberg function with $h' \subset h$.
Thus, the cohomology class in \eqref{eq:basis_Intro_typeD} for $m_i=h(i)-h'(i) \ (1 \leq i \leq n)$ is  the Poincar\'e dual $[\Hess(N,h')]$ in $H^*(\Hess(N,h))$ up to a non-zero scalar multiplication (see Section~\ref{section:Gysin map} for the details).
Namely, the basis in Theorem~\ref{theorem_intro_typeD} is also an extended basis of all Poincar\'e duals $[\Hess(N,h')]$ of smaller regular nilpotent Hessenberg varieties $\Hess(N,h')$ in $H^*(\Hess(N,h))$.

We do expect the analogue of the linear independence to hold for other exceptional types
and give an affirmative answer to question $(1)$ for types $F_4$ and $E_6$ by using Maple.
We summarize the results as follows.

\begin{theorem} \label{theorem:Intro_ABCDE6FG}
Let $\Hess(N,h)$ be a regular nilpotent Hessenberg variety of types $A_{n},B_n,C_n$, $D_n, E_6, F_4, G_2$. 
Then, the set $\{[\Hess(N,h')] \in H^*(\Hess(N,h)) \mid h' \subset h \}$ of the Poincar\'e duals is linearly independent. 
\end{theorem}

The paper is organized as follows.
After reviewing the definition and properties of Hessenberg varieties in Section~\ref{section:Hessenberg varieties}, we introduce the main object of the paper, namely the Poincar\'e duals of regular nilpotent Hessenberg varieties, in Section~\ref{section:Gysin map}. 
A key lemma in commutative algebra, necessary for the proof of Theorems~\ref{theorem_intro} and \ref{theorem_intro_typeD}, is recorded in Section~\ref{section:preliminaries}. 
We prove Theorem~\ref{theorem_intro} in type $A$ and explain how to adjust the proof in types $B$, $C$, and $G$, in Section~\ref{section:Main theorem}. 
We sketch the outline of the proof of Theorem~\ref{theorem_intro_typeD} in Section~\ref{section:typeD} and discuss the linear independence of the Poincar\'e duals (Theorem~\ref{theorem:Intro_ABCDE6FG}) in Section~\ref{section:Poincareduals}.

\bigskip
\noindent \textbf{Acknowledgements.}  

We are grateful to Satoshi Murai for his invaluable help in the commutative algebra. 
We appreciate Hiraku Abe for fruitful discussions on a flat family of regular Hessenberg varieties.
We also appreciate Martha Precup for kindly explaining a connectedness of regular semisimple Hessenberg varieties.
The first author was partially supported by JSPS Grant-in-Aid for Research Activity Start-up: 19K23407. 
The second author was partially supported by JSPS Grant-in-Aid for JSPS Research Fellow: 17J04330. 
He was also partially supported by JSPS Grant-in-Aid for Young Scientists: 19K14508. 
The third author was partially supported by JSPS Grant-in-Aid for JSPS Research Fellow: 19J11207.
The fourth author was partially supported by JSPS Grant-in-Aid for JSPS Research Fellow: 19J00312. 
He was also partially supported by JSPS Grant-in-Aid for Young Scientists: 19K14505.

\bigskip

\section{Hessenberg varieties}
\label{section:Hessenberg varieties}

Let $G$ be a semisimple linear algebraic group of rank $n$ and fix a Borel subgroup $B$ of $G$. We denote by $\mathfrak{g}$ and $\mathfrak{b}$ the Lie algebras of $G$ and $B$, respectively.
A \textbf{Hessenberg space} is defined to be a $\mathfrak{b}$-submodule of $\mathfrak{g}$ containing $\mathfrak{b}$.  
Fix a set $\{\alpha_1,\ldots,\alpha_n \}$ of simple roots and we denote by $\Phi^+$ the set of positive roots.
A subset $I \subset \Phi^+$ is called a \textbf{lower ideal} if it satisfies the following condition:
\begin{align*}
\textrm{if $\alpha \in \Phi^+$ and $\beta \in I$ with $\alpha \preceq \beta$, then $\alpha \in I$,}
\end{align*}
where $\alpha \preceq \beta$ if and only if $\beta - \alpha$ can be written as a linear combination of the simple roots $\alpha_1,\ldots,\alpha_n$ with non-negative coefficients.

One can see that there is one-to-one correspondence between the set of lower ideals and the set of Hessenberg spaces which sends $I \subset \Phi^+$ to 
$$
H(I):=\mathfrak{b}\oplus \big(\bigoplus_{\alpha \in I} \mathfrak{g}_{-\alpha} \big),
$$ 
where $\mathfrak{g}_{\alpha}$ is the root space for a root $\alpha$. 
The \textbf{Hessenberg variety} $\Hess(X,I)$ associated with an element $X\in \mathfrak{g}$ and a lower ideal $I \subset \Phi^+$ is defined to be the following subvariety of the flag variety $G/B$: 
\begin{equation*} 
\Hess(X,I):=\{g B \in G/B \mid \mbox{Ad}(g^{-1})(X) \in H(I)\}.
\end{equation*}
An element $X\in \mathfrak{g}$ is \textbf{nilpotent} if $\mbox{ad}(X)$ is nilpotent, i.e.,\ $\mbox{ad}(X)^k=0$ for some $k>0$.
An element $X\in \mathfrak{g}$ is \textbf{regular} if the $G$-orbit $G \cdot X$ under the adjoint action has the largest possible dimension. 
In this paper, we concentrate on Hessenberg varieties $\Hess(N,I)$ for a regular nilpotent element $N \in \mathfrak{g}$ which are called \textbf{regular nilpotent Hessenberg varieties}. 
Note that if $I=\Phi^+$, then $\Hess(N,I)$ coincides with the flag variety $G/B$. 
If we take $I$ as the set $\{\alpha_1,\ldots,\alpha_n \}$ of simple roots, then the associated regular nilpotent Hessenberg variety is called the \textbf{Peterson variety} which arises in the study of the quantum cohomology of the flag variety (\cite{Ko, R}).
The following results are some basic properties of $\Hess(N,I)$:
\begin{enumerate}
\item $\Hess(N,I)$ has no odd degree cohomology and the complex dimension of $\Hess(N,I)$ is given by $|I|$ ({\cite{Pre13}).
\item $\Hess(N,I)$ is irreducible (\cite{AFZ, Pre18}).
\item $\Hess(N,I)$ has a singular point in general (\cite{IY, Ko}}).
\end{enumerate}

Let $T$ be a maximal torus contained in the Borel subgroup $B$ and $\hat T$ the character group of $T$.
Any character $\alpha \in \hat T$ extends to a character of $B$, so $\alpha$ defines a complex line bundle $L_\alpha:=G \times_B \C$ over the flag variety $G/B$.
Here, $L_\alpha$ is the quotient of the product $G \times \C$ by the right $B$-action given by $(g,z) \cdot b  = (gb, \alpha(b)z)$ for $b \in B$ and $(g,z) \in G \times \C$.
To each $\alpha \in \hat T$ we assign the Euler class $e(L_\alpha) \in H^2(G/B)$. 
In what follows, we regard $\hat T$ as an additive group so that $\hat T \otimes_{\Z} \R$ is identified with the dual space $\mathfrak{t}^*$ of the Lie algebra of the maximal compact torus $T_\R$.
According to Borel's theorem \cite{B}, the ring homomorphism 
\begin{equation} \label{eq:varphi}
\varphi\colon \CR=\mbox{Sym}(\mathfrak{t}^*) \to H^*(G/B); \ \ \ \alpha \mapsto e(L_{\alpha})
\end{equation}
is surjective. 
Composing $\varphi$ with the restriction map $H^*(G/B) \to H^*(\Hess(N,I))$, we have a ring homomorphism
\begin{equation} \label{eq:varphiNilpotent}
\varphi_I\colon \CR\to H^*(\Hess(N,I)); \ \ \ \alpha \mapsto e(L_{\alpha})|_{\Hess(N,I)}
\end{equation}
where $e(L_{\alpha})|_{\Hess(N,I)} \in H^2(\Hess(N,I))$ denotes the restriction of the Euler class $e(L_{\alpha}) \in H^2(G/B)$.

\begin{theorem}[{\cite{AHMMS}}] \label{theorem:AHMMS}
Let $I$ be a lower ideal and $\Hess(N,I)$ the associated regular nilpotent Hessenberg variety.
Then, the following holds.
\begin{enumerate}
\item The ring homomorphism $\varphi_I$ in \eqref{eq:varphiNilpotent} is surjective. 
\item The cohomology ring $H^*(\Hess(N,I))$ is a Poincar\'e duality algebra, namely the usual paring $H^{2k}(\Hess(N,I)) \times H^{2|I|-2k}(\Hess(N,I)) \to H^{2|I|}(\Hess(N,I)) \cong \R$ via the cup product is non-degenerate.
\item The Poincar\'e polynomial of $\Hess(N,I)$ is equal to
\begin{equation} \label{eq:PoinHess0}
\Poin(\Hess(N,I),\sqrt\t)=\prod_{\alpha \in I} \frac{1-t^{\height(\alpha)+1}}{1-t^{\height(\alpha)}}
\end{equation}
where $\height(\alpha)$ denotes the height of a root $\alpha$.
\end{enumerate}
\end{theorem}

\begin{remark}
A regular nilpotent Hessenberg variety is singular in general, but its cohomology ring is a Poincar\'e duality algebra.
\end{remark}

\bigskip

\section{Gysin map}
\label{section:Gysin map}

In this section we begin with the Gysin map among regular nilpotent Hessenberg varieties. 
We then define the Poincar\'e duals of regular nilpotent Hessenberg varieties and explain their properties.

Let $I$ and $I'$ be two lower ideals with $I' \subset I$. 
By the definition of Hessenberg varieties, we have the inclusion map $\iota: \Hess(N,I') \subset \Hess(N,I)$.
Since the cohomology rings of regular nilpotent Hessenberg varieties are Poincar\'e duality algebras, 
we can algebraically define the Gysin map  
\begin{equation} \label{eq:GysinMap}
\iota_{!}: H^{*}(\Hess(N,I')) \to H^{*+2c}(\Hess(N,I))
\end{equation}
where $c:=|I|-|I'|$ is the complex codimention of $\Hess(N,I')$ in $\Hess(N,I)$. 
More specifically, the Gysin map is defined to be the following composition map
$$
\iota_{!}: H^{2k}(\Hess(N,I')) \cong H_{2|I'|-2k}(\Hess(N,I')) \xrightarrow{\iota_*} H_{2|I'|-2k}(\Hess(N,I)) \cong H^{2k+2c}(\Hess(N,I))
$$
where the first and the third isomorphisms are the Poincar\'e duality maps and the second map $\iota_*$ denotes the pushforward of the inclusion $\iota$.
Here, we need to determine an identification between the top degree cohomology $H^{2|I|}(\Hess(N,I))$ and $\R$.
In this paper we take as a generator $x_I^N$ for $H^{2|I|}(\Hess(N,I))$ an element  
$$
x_I^N:= \frac{1}{|W_I|}\prod_{\alpha \in I} e(L_{\alpha})|_{\Hess(N,I)},
$$
where $W_I$ is the parabolic subgroup of the Weyl group $W$ associated with simple roots contained in $I$, namely $W_I$ is generated by simple reflections $s_i$ associated with simple roots $\alpha_i$ in $I$.
Then, the evaluation map for $\Hess(N,I)$ is given by 
\begin{equation} \label{eq:intNilpotent}
\int: H^{2|I|}(\Hess(N,I)) \xrightarrow{\cong} \R; \ \ \  \x_I^N \mapsto 1. 
\end{equation}
We briefly explain below why we take $x_I^N$ as the generator for $H^{2|I|}(\Hess(N,I))$.
Let $S \in \mathfrak{g}$ be a regular semisimple element (i.e., $\mbox{ad}(X)$ is diagonalizable) and we consider the associated Hessenberg varieties $\Hess(S,I)$ which are called  \textbf{regular semisimple Hessenberg varieties}.
It is known that $\Hess(S,I)$ is connected if and only if $I$ contains all simple roots ({\cite[Corollary 9]{dMPS}}). 
We first consider the case when $\Hess(S,I)$ is connected, namely $I$ contains all simple roots.
Since $\Hess(S,I)$ is compact and smooth ({\cite[Theorem 6]{dMPS}}), the evaluation map on the fundamental class of $\Hess(S,I)$ gives the isomorphism 
\begin{equation} \label{eq:orientationHess(S,I)}
\int: H^{2|I|}(\Hess(S,I);\Z) \xrightarrow{\cong} \Z.
\end{equation}
Here, the Poincar\'e dual of a point in $\Hess(S,I)$ is given by 
\begin{equation} \label{eq:ptPoincaredualcnnected}
[\point]=\frac{1}{|W|}\prod_{\alpha \in I} e(L_{\alpha})|_{\Hess(S,I)} \ \ \ \ \ \ {\rm in} \ H^*(\Hess(S,I)).
\end{equation}
In fact, $\prod_{\alpha \in I} e(L_{\alpha})|_{\Hess(S,I)}$ is the Euler class of the tangent bundle of $\Hess(S,I)$ and the Euler characteristic is equal to $|W|$ (\cite[Lemma~7 and Theorem~8]{dMPS}).

The cohomology ring $H^*(\Hess(S,I))$ admits the Weyl group action introduced by Tymoczko (\cite{Tym08}). The Euler class $e(L_{\alpha})|_{\Hess(S,I)}$ is $W$-invariant for arbitrary roots $\alpha$ (\cite[Lemma 8.8.]{AHMMS}), so the Poincar\'e dual of a point is also $W$-invariant by \eqref{eq:ptPoincaredualcnnected}.
Hence, \eqref{eq:orientationHess(S,I)} leads us to the evaluation map 
\begin{align} \label{eq:evaluation_W-inv}
\int: H^{2|I|}(\Hess(S,I))^W = H^{2|I|}(\Hess(S,I)) \xrightarrow{\cong} \R; \ \ \ \frac{1}{|W|}\prod_{\alpha \in I} e(L_{\alpha})|_{\Hess(S,I)} \mapsto 1.
\end{align}
However, by the results of \cite{AHHM, AHMMS} there is a ring isomorphism between $H^*(\Hess(N,I))$ and $H^*(\Hess(S,I))^W$ so that the following diagram commutes:
\begin{equation} \label{eq:AHMMScd}
  \xymatrix{
  & H^*(G/B) \ar[dl] \ar[dr]& \\
  H^*(\Hess(N,I)) \ar[rr]^{\cong}& & H^*(\Hess(S,I))^W
  }
\end{equation}
where the slanting arrows denote restriction maps which are surjective.
In particular, the isomorphism $H^*(\Hess(N,I)) \cong H^*(\Hess(S,I))^W$ sends the Euler class $e(L_{\alpha})|_{\Hess(N,I)}$ in $\Hess(N,I)$ to the Euler class $e(L_{\alpha})|_{\Hess(S,I)}$ in $\Hess(S,I)$.
The evaluation map \eqref{eq:evaluation_W-inv} through the isomorphism $H^*(\Hess(N,I)) \cong H^*(\Hess(S,I))^W$ is nothing but \eqref{eq:intNilpotent} because now $I$ contains all simple roots.

When $I$ is an arbitrary lower ideal, we put 
$$
x_I^S: = \frac{1}{|W_I|}\prod_{\alpha \in I} e(L_{\alpha})|_{\Hess(S,I)}
$$
and define the evaluation map by
\begin{equation} \label{eq:intSemisimpleW}
\int: H^{2|I|}(\Hess(S,I))^W \xrightarrow{\cong} \R ; \ \ \ x_I^S \mapsto 1.
\end{equation}
Then, \eqref{eq:intSemisimpleW} corresponds to \eqref{eq:intNilpotent} under the isomorphism $H^*(\Hess(N,I)) \cong H^*(\Hess(S,I))^W$.

Using the Gysin map \eqref{eq:GysinMap}, we define the Poincar\'e dual of $\Hess(N,I')$ in $\Hess(N,I)$ by 
$$
[\Hess(N,I')]:=\iota_{!}(1) \in H^{2(|I|-|I'|)}(\Hess(N,I)).
$$

\begin{lemma} [\cite{AHMMS}] \label{lemma:Gysinmap}
Let $I'$ and $I$ be lower ideals with $I' \subset I$.
Then, the Gysin map $\iota_{!}: H^*(\Hess(N,I')) \to H^{*}(\Hess(N,I))$ in \eqref{eq:GysinMap} is injective.
The Poincar\'e dual of $\Hess(N,I')$ in $\Hess(N,I)$ is equal to 
\begin{equation} \label{eq:PoincaredualNilpotent}
[\Hess(N,I')]=\frac{|W_{I'}|}{|W_{I}|}\prod_{\alpha \in I \setminus I'} e(L_{\alpha})|_{\Hess(N,I)} \ \ \ \ \ \ {\rm in} \ H^*(\Hess(N,I)).
\end{equation}
\end{lemma}

For the convenience of the reader, we here explain Lemma~\ref{lemma:Gysinmap} briefly.
By \cite[Lemma~8.11]{AHMMS} the Poincar\'e dual of $\Hess(S,I')$ in $\Hess(S,I)$ is equal to 
\begin{equation} \label{eq:PoincaredualSemisimple}
[\Hess(S,I')]=\prod_{\alpha \in I \setminus  I'}e(L_{\alpha})|_{\Hess(S,I)} \ \ \ \ \ \ {\rm in} \ H^*(\Hess(S,I)).
\end{equation} 
Since the Gysin map $j_{!}: H^*(\Hess(S,I')) \to H^{*+2(|I|-|I'|)}(\Hess(S,I))$ is $W$-equivariant (\cite[Proposition~8.12]{AHMMS}), it induces $j_{!}: H^*(\Hess(S,I'))^W \to H^{*+2(|I|-|I'|)}(\Hess(S,I))^W$ and we have
$$
j_{!}(x^S_{I'})=\left(\frac{1}{|W_{I'}|}\prod_{\alpha \in I'} e(L_{\alpha})|_{\Hess(S,I)} \right) \cdot \left(\prod_{\alpha \in I \setminus I'} e(L_{\alpha})|_{\Hess(S,I)} \right)=\frac{|W_I|}{|W_{I'}|} x^S_{I}.
$$
On the other hand, we can define algebraically the Gysin map $j^W_{!}: H^*(\Hess(S,I'))^W \to H^{*+2(|I|-|I'|)}(\Hess(S,I))^W$ by using the identification in \eqref{eq:intSemisimpleW}.
Then, $j^W_{!}$ maps the generator $x_{I'}^S \in H^{2|I'|}(\Hess(S,I'))^W$ to the generator $x_I^S \in H^{2|I|}(\Hess(S,I))^W$. 
That is, this makes the following commutative diagram:
\[\xymatrix{H^*(\Hess(S,I'))\ar[r]^-{\frac{|W_{I'}|}{|W_{I}|}j_{!}} &H^*(\Hess(S,I))\\ H^*(\Hess(S,I'))^W\ar@{^{(}->}[u]\ar[r]^-{j^W_{!}}&H^*(\Hess(S,I))^W.\ar@{^{(}->}[u]}\]
Therefore, we conclude that $j^W_{!}(1) = \frac{|W_{I'}|}{|W_{I}|} \prod_{\alpha \in I \setminus  I'}e(L_{\alpha})|_{\Hess(S,I)}$, which implies \eqref{eq:PoincaredualNilpotent}.

Next, we see the injectivity for the Gysin map.
One has $H^*(\Hess(N,I)) \cong \CR/\ker \varphi_I$ by Theorem~\ref{theorem:AHMMS}.
Setting $\beta^I_{I'}:=\frac{|W_{I'}|}{|W_{I}|}\prod_{\alpha \in I \setminus  I'} \alpha \in \CR$, a relation between $\ker \varphi_I$ and $\ker \varphi_{I'}$ is given by $\ker \varphi_{I'} = \ker \varphi_I : \beta^I_{I'}$ (\cite[Proposition~4.2]{AHMMS}) and hence the multiplication map $\CR/\ker \varphi_{I'} \xrightarrow{\times \beta^I_{I'}} \CR/\ker \varphi_I$ by $\beta^I_{I'}$ is injective.
One can easily see that this map corresponds to the Gysin map $\iota_{!}: H^*(\Hess(N,I')) \to H^{*}(\Hess(N,I))$ under the isomorphism $H^*(\Hess(N,I)) \cong \CR/\ker \varphi_I$.
More specifically, the following commutative diagram holds. 
\begin{equation} \label{eq:CDGysinNilpotent}
  \begin{CD}
     H^*(\Hess(N,I')) @>{\iota_{!}}>> H^{*}(\Hess(N,I)) \\
  @V{\cong}VV    @VV{\cong}V \\
     \CR/\ker \varphi_{I'}   @>{\times \beta^I_{I'}}>>  \CR/\ker \varphi_I
  \end{CD}
\end{equation}
Therefore, we conclude that $\iota_{!}$ is injective.

\begin{remark}
Let $I = \Phi^+$. If $I'$ contains all simple roots, then the formula in \eqref{eq:PoincaredualNilpotent} coincides with the usual definition of the Poincar\'e dual of the subvariety $\Hess(N,I')$ (\cite[Corollary~3.9]{AFZ}). 
\end{remark}

The main theorem is to construct an additive basis $\mathcal{B}$ for $H^*(\Hess(N,I))$ such that $\mathcal{B}$ extends the set $\{[\Hess(N,I')] \in H^*(\Hess(N,I)) \mid I' \subset I \}$. 
In order to construct an additive basis, we need a notion of Hessenberg functions as described below. 

\begin{definition} \label{definition;decomposition}
Let $\e_1,\e_2,\ldots,\e_n$ be the exponents\footnote{For the list of exponents $e_1,\ldots,e_n$, see for example \cite[p.59 Table~1 and p.81 Theorem~3.19]{Hum1990}.} of the Weyl group $W$. 
We define a decomposition $\Phi^+=\coprod_{i=1}^n \ \Phi^+_i$ as follows. 
For $i =1,2,\ldots,n$, $\Phi^+_i$ is a set of positive roots $\alpha_{i,i+1}, \alpha_{i,i+2}, \ldots, \alpha_{i,i+\e_i}$ such that 
\begin{align}
&\alpha_{i,i+1}=\alpha_i \ \ \ {\rm the \ simple \ root}, \label{eq:decomposition1} \\
&\alpha_{i,j} \lessdot \alpha_{i,j+1} \ \ {\rm for \ any} \ j \ {\rm with} \ i<j<i+\e_i. \label{eq:decomposition2}
\end{align}
Here, we denote the covering relation by the symbol $\lessdot$, namely, there is no element $\beta \in \Phi^+$ such that $\alpha_{i,j} < \beta < \alpha_{i,j+1}$.
\end{definition}

\begin{definition} \label{definition;Hessenberg function}
Let $\Phi^+=\coprod_{i=1}^n \ \Phi^+_i$ be a decomposition in Definition~\ref{definition;decomposition}.
Then the \textbf{Hessenberg function $h_I: \{1,2,\ldots,n\} \to \Z_{\geq 0}$ associated with a lower ideal $I$} is defined by 
\begin{equation} \label{eq:Hessft}
h_I(i):=i+ \# (I \cap \Phi^+_i) 
\end{equation}
for $1 \leq i \leq n$.
\end{definition}

\begin{example}
Consider the $A_3$ root system with $\Phi^+=\{x_i-x_j \mid 1 \leq i < j \leq 4 \}$. We take the simple roots $\alpha_1, \alpha_2, \alpha_3$ as $\alpha_i=x_i-x_{i+1}$ for $1 \leq i \leq 3$.
\begin{enumerate}
\item Let $e_1=3, e_2=2, e_3=1$ and define 
\begin{align*}
\Phi^+_1=\{x_1-x_2, x_1-x_3, x_1-x_4 \}, \Phi^+_2=\{x_2-x_3, x_2-x_4 \}, \Phi^+_3=\{x_3-x_4 \}.
\end{align*}
Then, the decomposition $\Phi^+=\coprod_{i=1}^3 \ \Phi^+_i$ satisfies \eqref{eq:decomposition1} and \eqref{eq:decomposition2}.
If we take a lower ideal $I$ as $I=\{x_1-x_2, x_1-x_3, x_2-x_3, x_3-x_4 \}$, then the corresponding Hessenberg function $h_I$ is given by $h_I(1)=2, h_I(2)=1$, and $h_I(3)=1$.
\item Taking $e_1=1, e_2=3, e_3=2$ and  
\begin{align*}
\Phi^+_1=\{x_1-x_2 \}, \Phi^+_2=\{x_2-x_3, x_1-x_3, x_1-x_4 \}, \Phi^+_3=\{x_3-x_4, x_2-x_4 \},
\end{align*}
the decomposition $\Phi^+=\coprod_{i=1}^3 \ \Phi^+_i$ also satisfies \eqref{eq:decomposition1} and \eqref{eq:decomposition2}.
In this case, the Hessenberg function $h_I$ associated with $I=\{x_1-x_2, x_1-x_3, x_2-x_3, x_3-x_4 \}$ is defined by $h_I(1)=1, h_I(2)=2$, and $h_I(3)=1$.
\end{enumerate}
\end{example}

\begin{remark}
As seen in the example above, the definition of Hessenberg function $h_I$ depends on a choice of a decomposition $\Phi^+=\coprod_{i=1}^n \ \Phi^+_i$ in Definition~\ref{definition;decomposition}.
In later sections we will choose some decomposition $\Phi^+=\coprod_{i=1}^n \ \Phi^+_i$ on each type, which corresponds to the original definition of Hessenberg function (see \cite{dMPS, dMS, Tym06}).
\end{remark}

Let $\Phi^+=\coprod_{i=1}^n \ \Phi^+_i$ be a decomposition in Definition~\ref{definition;decomposition}.
Let $h$ be the Hessenberg function associated with a lower ideal $I$. 
Then, we denote by $\Hess(N,h)$ the regular nilpotent Hessenberg variety $\Hess(N,I)$.
We can rewrite \eqref{eq:PoinHess0} as
\begin{equation} \label{eq:PoinHess}
\Poin(\Hess(N,h),\sqrt\t)=\prod_{i=1}^n (1+\t+\t^2+\cdots+\t^{h(i)-i}).
\end{equation}
In particular, one has
$$
\dim_{\C} \Hess(N,h)=\sum_{i=1}^{n} (h(i)-i).
$$ 
For any two Hessenberg functions $h$ and $h'$, we define $h' \subset h$ if $h'(i) \leq h(i)$ for any $i=1,\ldots,n$.
Then, one can easily see that 
\begin{equation} \label{eq:inclusion}
h' \subset h \iff \Hess(N,h') \subset \Hess(N,h).
\end{equation}
We also rewrite \eqref{eq:PoincaredualNilpotent} as
\begin{equation} \label{eq:PoincaredualNilpotent_h}
[\Hess(N,h')]=\frac{|W_{h'}|}{|W_{h}|}\prod_{i=1}^{n} \prod_{j=h'(i)+1}^{h(i)} e(L_{\alpha_{i,j}})|_{\Hess(N,h)} \ \ \ \ \ \ {\rm in} \ H^*(\Hess(N,h)),
\end{equation}
where $W_h$ is the parabolic subgroup generated by simple reflections $s_i$ for $i$ with $h(i)>i$.
In particular, the coefficient $\frac{|W_{h'}|}{|W_{h}|}$ in \eqref{eq:PoincaredualNilpotent_h} is equal to $1$ whenever $h(i)>i$ and $h'(i)>i$ for all $i=1,\ldots,n$.

\begin{remark}
We use two notations $\Hess(N,I)$ and $\Hess(N,h)$ for regular nilpotent Hessenberg varieties.
The first notation $\Hess(N,I)$ is useful to uniformly define Hessenberg varieties across Lie types, whereas the second notation $\Hess(N,h)$ is needed for a construction of an additive basis for $H^*(\Hess(N,h))$ which will be explained in next section. 
\end{remark}

\bigskip

\section{A preliminary lemma in commutative algebra}
\label{section:preliminaries}

Our arguments depend heavily on techniques from commutative algebra.
We begin with the definition of a regular sequence.

\begin{definition}
Let $S$ be a ring. A sequence $\theta_1,\ldots,\theta_r \in S$ is called a \textbf{regular sequence} of $S$ if it satisfies the following two conditions:
\begin{itemize}
\item[(i)] $\theta_i$ is not a zero-divisor in $S/(\theta_1,\ldots,\theta_{i-1})$ for $i = 1, \ldots , r$,
\item[(ii)] $S/(\theta_1,\ldots,\theta_r) \neq 0$.
\end{itemize}
\end{definition}

Recall that a finitely generated graded $\R$-algebra $R$ is Artinian if and only if it is a finite-dimensional vector space over $\R$.
Let $S=\R[x_1,\ldots,x_n]$ be a polynomial ring with $\deg x_i=1$ for any $i=1,\ldots,n$ and $I$ a homogeneous ideal of $S$.
Then, the Hilbert series of the quotient graded ring $R:=S/I$ is defined to be 
$$
\Hilb(R,\t)=\sum_{i}(\dim_{\R} R_i) \, \t^i
$$
where $R_i$ is the degree $i$ piece of $R$. 
The graded $\R$-algebra $R=S/I$ is a \textbf{complete intersection} if $I$ is generated by a regular sequence of $S$.
Let $f_1,\ldots,f_i$ be a regular sequence of $S$. 
It is known that if $R=S/(f_1,\ldots,f_i)$ is complete intersection, then its Krull dimension is $n-i$.
In particular, $R$ is Artinian if and only if $i=n$. 
The following fact is well-known in commutative algebra (see e.g. \cite[p.35]{Stanley1996}.)

\begin{lemma} \label{lemma:wellknown}
If the quotient ring $R=S/I$ is Artinian and $I$ is generated by $n$ homogeneous polynomials $f_1,\ldots,f_n$, then $R$ is complete intersection.
Moreover, the Hilbert series of $R$ is given by
$$
\Hilb(R,\t)=\prod_{i=1}^n (1+\t+\t^2+\cdots+\t^{\deg f_i-1}).
$$
\end{lemma}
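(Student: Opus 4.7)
The plan is to establish that $f_1,\ldots,f_n$ form a regular sequence, and then to obtain the Hilbert series from the standard multiplicative behavior of the Hilbert series under quotienting by a non-zero-divisor.

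First I would pin down the Krull dimension. Because $R=S/I$ is Artinian, $\dim R=0$, so the ideal $I$ must have height $n$ in $S$ (since $\dim S = n$ and $\dim S - \height I = \dim R$ for a quotient of a polynomial ring by a homogeneous ideal, via the graded Nullstellensatz and Krull dimension). On the other hand, $I$ is generated by the $n$ homogeneous elements $f_1,\ldots,f_n$, and Krull's Hauptidealsatz bounds the height of such an ideal from above by $n$. So $\height I = n$, the maximum possible.

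Next I would invoke Cohen-Macaulayness of $S$. The polynomial ring $S=\R[x_1,\ldots,x_n]$ is Cohen-Macaulay. In a Cohen-Macaulay ring, an ideal generated by $n$ elements that attains height $n$ is automatically generated by a regular sequence (equivalently, any system of parameters is a regular sequence in a Cohen-Macaulay local ring; the graded statement follows by localizing at the irrelevant maximal ideal $(x_1,\ldots,x_n)$, or directly from the unmixedness theorem). Applying this to $f_1,\ldots,f_n$, the sequence is regular, and hence $R$ is a complete intersection by definition. (Condition (ii) in the definition of a regular sequence, $R\ne 0$, is automatic since $I$ is a proper homogeneous ideal.)

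For the Hilbert series, I would argue by induction on $n$. Set $S^{(0)}:=S$ and $S^{(i)}:=S/(f_1,\ldots,f_i)$. Since $f_{i+1}$ is a non-zero-divisor of degree $d_{i+1}:=\deg f_{i+1}$ in $S^{(i)}$, there is a short exact sequence of graded modules
\[
0\longrightarrow S^{(i)}(-d_{i+1})\xrightarrow{\,\cdot f_{i+1}\,} S^{(i)}\longrightarrow S^{(i+1)}\longrightarrow 0,
\]
and additivity of Hilbert series on short exact sequences gives $\Hilb(S^{(i+1)},\t)=(1-\t^{d_{i+1}})\,\Hilb(S^{(i)},\t)$. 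Starting from $\Hilb(S,\t)=1/(1-\t)^n$ and iterating $n$ times yields
\[
\Hilb(R,\t)=\frac{\prod_{i=1}^n (1-\t^{\deg f_i})}{(1-\t)^n}=\prod_{i=1}^n\bigl(1+\t+\t^2+\cdots+\t^{\deg f_i-1}\bigr),
\]
which is the claimed formula.

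The only substantive step is the passage from ``$I$ has height $n$ and is generated by $n$ homogeneous elements'' to ``$f_1,\ldots,f_n$ is a regular sequence'', i.e.\ the use of Cohen-Macaulayness of $S$; everything else is dimension counting and a routine exact-sequence computation. Since this is a well-known fact in commutative algebra, I would simply cite Stanley's book \cite[p.~35]{Stanley1996} (as the authors do in the statement) rather than reprove the Cohen-Macaulay input.
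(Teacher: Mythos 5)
Your argument is correct. The paper itself gives no proof of this lemma -- it simply records it as a well-known fact with a citation to Stanley -- and what you have written is precisely the standard argument behind that citation: Artinian $\Rightarrow$ $\height I=n$, Krull's Hauptidealsatz plus Cohen--Macaulayness of $S$ (via localization at the irrelevant maximal ideal, which is legitimate here because all associated primes of a graded quotient are homogeneous and hence contained in $(x_1,\ldots,x_n)$) $\Rightarrow$ $f_1,\ldots,f_n$ is a regular sequence, and then the Hilbert series follows by iterating the short exact sequence $0\to S^{(i)}(-d_{i+1})\to S^{(i)}\to S^{(i+1)}\to 0$. So your proposal is sound and is, in substance, the same route the authors implicitly rely on; citing Stanley for the Cohen--Macaulay input, as you suggest, matches the paper's treatment.
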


The following lemma is useful for proving the main theorem.

\begin{lemma} \label{lemma:key}
Let $g_1,\ldots,g_n \in S$ be a regular sequence of $S$.
Assume that $g_n=g'_n \cdot g''_n$ for some polynomials $g'_n, g''_n \in S$.
Then, the map given by multiplication of $g''_n$ 
$$
\times g''_n: S/(g_1,\ldots,g_{n-1},g'_n) \to S/(g_1,\ldots,g_{n-1},g_n)
$$
is injective.
\end{lemma}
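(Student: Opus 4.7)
The plan is to prove injectivity by a direct ideal chase, reducing everything to the single fact that a divisor of a non-zero-divisor is itself a non-zero-divisor.

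First I would observe that the map is well-defined: if $f \in (g_1,\ldots,g_{n-1},g_n')$, writing $f = \sum_{i=1}^{n-1} b_i g_i + b\, g_n'$ gives $g_n'' f = \sum_{i=1}^{n-1} b_i g_n'' g_i + b\, g_n' g_n'' = \sum_{i=1}^{n-1} b_i g_n'' g_i + b\, g_n \in (g_1,\ldots,g_{n-1},g_n)$.

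For injectivity, suppose $f \in S$ satisfies $g_n'' f \in (g_1,\ldots,g_{n-1},g_n)$, and write
\[
g_n'' f \;=\; \sum_{i=1}^{n-1} a_i g_i + a_n g_n \;=\; \sum_{i=1}^{n-1} a_i g_i + a_n g_n' g_n''
\]
for some $a_1,\ldots,a_n \in S$. Rearranging gives $g_n''(f - a_n g_n') \in (g_1,\ldots,g_{n-1})$, so it suffices to show that $g_n''$ is a non-zero-divisor on $S/(g_1,\ldots,g_{n-1})$; once we have that, we conclude $f - a_n g_n' \in (g_1,\ldots,g_{n-1})$ and hence $f \in (g_1,\ldots,g_{n-1},g_n')$, as required.

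The key step is therefore the claim that $g_n''$ is a non-zero-divisor on $S/(g_1,\ldots,g_{n-1})$. This follows from the hypothesis that $g_1,\ldots,g_n$ is a regular sequence: by definition the product $g_n = g_n' g_n''$ acts as a non-zero-divisor on $S/(g_1,\ldots,g_{n-1})$, and in any commutative ring a factor of a non-zero-divisor is again a non-zero-divisor (if $g_n'' h \equiv 0$ modulo $(g_1,\ldots,g_{n-1})$, then $g_n' g_n'' h \equiv 0$, hence $h \equiv 0$). I do not expect any real obstacle here: the whole argument is purely formal, and the only content is this one-line observation about regular sequences. The factorization hypothesis $g_n = g_n' g_n''$ is used precisely to deduce the non-zero-divisor property of $g_n''$ from that of $g_n$.
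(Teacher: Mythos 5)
Your proposal is correct and follows essentially the same argument as the paper: rearrange $g_n''f=\sum_i a_i g_i + a_n g_n' g_n''$ to get $g_n''(f-a_n g_n')\in(g_1,\ldots,g_{n-1})$, and use that $g_n''$ is a non-zero-divisor on $S/(g_1,\ldots,g_{n-1})$ because its multiple $g_n$ is (the paper phrases this as the injective map $\times g_n$ factoring through $\times g_n''$, which is the same one-line observation). No gaps.
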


\begin{proof}
Since $g_n$ is not a zero-divisor in $S/(g_1,\ldots,g_{n-1})$, the map given by multiplication of $g_n$ 
$$
\times g_n: S/(g_1,\ldots,g_{n-1}) \xrightarrow{\times g_n''} S/(g_1,\ldots,g_{n-1}) \xrightarrow{\times g_n'} S/(g_1,\ldots,g_{n-1})
$$
is injective, and hence the multiplication map 
\begin{equation} \label{eq:keyproof}
\times g''_n: S/(g_1,\ldots,g_{n-1}) \to S/(g_1,\ldots,g_{n-1})
\end{equation}
is also injective. 
Let $f \in S$ such that $f \cdot g_n'' \in (g_1,\ldots,g_{n-1},g_n)$.
Then it is enough to show that $f \in (g_1,\ldots,g_{n-1},g'_n)$.
If we write $f \cdot g_n''=\sum_{i=1}^n F_i \cdot g_i$ for some polynomials $F_i \in S$, then we have $(f-F_n \cdot g'_n) \cdot g''_n =\sum_{i=1}^{n-1} F_i \cdot g_i \in (g_1,\ldots,g_{n-1})$.
By the injectivity of \eqref{eq:keyproof} we have $f-F_n \cdot g'_n \in (g_1,\ldots,g_{n-1})$ and hence $f \in (g_1,\ldots,g_{n-1},g'_n)$, as desired.
\end{proof}

\bigskip

\section{Main theorem}
\label{section:Main theorem}

In this section we discuss in detail Theorem~\ref{theorem_intro} for type $A_{n-1}$. 
The proof of Theorem~\ref{theorem_intro} for types $B,C,G$ is similar to the proof for
type $A$. We leave the proof for types $B,C,G$ as an exercise for the reader.
In what follows, we frequently use the symbol $[n]:=\{1,2,\ldots,n\}$.

A \textbf{Hessenberg function for type $A_{n-1}$} is defined to be a function $h: [n] \to [n]$ satisfying the following two conditions
\begin{enumerate}
\item $h(1) \leq h(2) \leq \cdots \leq h(n)$, 
\item $h(i) \geq i$ \ \ \ for $i \in [n]$.
\end{enumerate}
Note that $h(n)=n$ by the definition.
We frequently write a Hessenberg function by listing its values in sequence, that is,
$h = (h(1), h(2), \ldots , h(n))$.
It is useful to express a Hessenberg function $h$ pictorially by drawing a configuration of boxes on a square grid of size $n \times n$ whose shaded boxes consist of boxes in the $i$-th row and the $j$-th column such that $j \leq h(i)$ for $i, j \in [n]$.

\begin{example}
Let $n=5$. Then, $h=(3,5,5,5,5)$ is a Hessenberg function for type $A_4$ and the configuration of the shaded boxes is shown in Figure~\ref{picture:typeAHessenbergFunction}.
\begin{figure}[h]
\begin{center}
\begin{picture}(75,75)
\put(0,63){\colorbox{gray}}
\put(0,67){\colorbox{gray}}
\put(0,72){\colorbox{gray}}
\put(4,63){\colorbox{gray}}
\put(4,67){\colorbox{gray}}
\put(4,72){\colorbox{gray}}
\put(9,63){\colorbox{gray}}
\put(9,67){\colorbox{gray}}
\put(9,72){\colorbox{gray}}

\put(15,63){\colorbox{gray}}
\put(15,67){\colorbox{gray}}
\put(15,72){\colorbox{gray}}
\put(19,63){\colorbox{gray}}
\put(19,67){\colorbox{gray}}
\put(19,72){\colorbox{gray}}
\put(24,63){\colorbox{gray}}
\put(24,67){\colorbox{gray}}
\put(24,72){\colorbox{gray}}

\put(30,63){\colorbox{gray}}
\put(30,67){\colorbox{gray}}
\put(30,72){\colorbox{gray}}
\put(34,63){\colorbox{gray}}
\put(34,67){\colorbox{gray}}
\put(34,72){\colorbox{gray}}
\put(39,63){\colorbox{gray}}
\put(39,67){\colorbox{gray}}
\put(39,72){\colorbox{gray}}

%

\put(0,48){\colorbox{gray}}
\put(0,52){\colorbox{gray}}
\put(0,57){\colorbox{gray}}
\put(4,48){\colorbox{gray}}
\put(4,52){\colorbox{gray}}
\put(4,57){\colorbox{gray}}
\put(9,48){\colorbox{gray}}
\put(9,52){\colorbox{gray}}
\put(9,57){\colorbox{gray}}

\put(15,48){\colorbox{gray}}
\put(15,52){\colorbox{gray}}
\put(15,57){\colorbox{gray}}
\put(19,48){\colorbox{gray}}
\put(19,52){\colorbox{gray}}
\put(19,57){\colorbox{gray}}
\put(24,48){\colorbox{gray}}
\put(24,52){\colorbox{gray}}
\put(24,57){\colorbox{gray}}

\put(30,48){\colorbox{gray}}
\put(30,52){\colorbox{gray}}
\put(30,57){\colorbox{gray}}
\put(34,48){\colorbox{gray}}
\put(34,52){\colorbox{gray}}
\put(34,57){\colorbox{gray}}
\put(39,48){\colorbox{gray}}
\put(39,52){\colorbox{gray}}
\put(39,57){\colorbox{gray}}

\put(45,48){\colorbox{gray}}
\put(45,52){\colorbox{gray}}
\put(45,57){\colorbox{gray}}
\put(49,48){\colorbox{gray}}
\put(49,52){\colorbox{gray}}
\put(49,57){\colorbox{gray}}
\put(54,48){\colorbox{gray}}
\put(54,52){\colorbox{gray}}
\put(54,57){\colorbox{gray}}

\put(60,48){\colorbox{gray}}
\put(60,52){\colorbox{gray}}
\put(60,57){\colorbox{gray}}
\put(64,48){\colorbox{gray}}
\put(64,52){\colorbox{gray}}
\put(64,57){\colorbox{gray}}
\put(69,48){\colorbox{gray}}
\put(69,52){\colorbox{gray}}
\put(69,57){\colorbox{gray}}

\put(0,33){\colorbox{gray}}
\put(0,37){\colorbox{gray}}
\put(0,42){\colorbox{gray}}
\put(4,33){\colorbox{gray}}
\put(4,37){\colorbox{gray}}
\put(4,42){\colorbox{gray}}
\put(9,33){\colorbox{gray}}
\put(9,37){\colorbox{gray}}
\put(9,42){\colorbox{gray}}

\put(15,33){\colorbox{gray}}
\put(15,37){\colorbox{gray}}
\put(15,42){\colorbox{gray}}
\put(19,33){\colorbox{gray}}
\put(19,37){\colorbox{gray}}
\put(19,42){\colorbox{gray}}
\put(24,33){\colorbox{gray}}
\put(24,37){\colorbox{gray}}
\put(24,42){\colorbox{gray}}

\put(30,33){\colorbox{gray}}
\put(30,37){\colorbox{gray}}
\put(30,42){\colorbox{gray}}
\put(34,33){\colorbox{gray}}
\put(34,37){\colorbox{gray}}
\put(34,42){\colorbox{gray}}
\put(39,33){\colorbox{gray}}
\put(39,37){\colorbox{gray}}
\put(39,42){\colorbox{gray}}

\put(45,33){\colorbox{gray}}
\put(45,37){\colorbox{gray}}
\put(45,42){\colorbox{gray}}
\put(49,33){\colorbox{gray}}
\put(49,37){\colorbox{gray}}
\put(49,42){\colorbox{gray}}
\put(54,33){\colorbox{gray}}
\put(54,37){\colorbox{gray}}
\put(54,42){\colorbox{gray}}

\put(60,33){\colorbox{gray}}
\put(60,37){\colorbox{gray}}
\put(60,42){\colorbox{gray}}
\put(64,33){\colorbox{gray}}
\put(64,37){\colorbox{gray}}
\put(64,42){\colorbox{gray}}
\put(69,33){\colorbox{gray}}
\put(69,37){\colorbox{gray}}
\put(69,42){\colorbox{gray}}

\put(0,18){\colorbox{gray}}
\put(0,22){\colorbox{gray}}
\put(0,27){\colorbox{gray}}
\put(4,18){\colorbox{gray}}
\put(4,22){\colorbox{gray}}
\put(4,27){\colorbox{gray}}
\put(9,18){\colorbox{gray}}
\put(9,22){\colorbox{gray}}
\put(9,27){\colorbox{gray}}

\put(15,18){\colorbox{gray}}
\put(15,22){\colorbox{gray}}
\put(15,27){\colorbox{gray}}
\put(19,18){\colorbox{gray}}
\put(19,22){\colorbox{gray}}
\put(19,27){\colorbox{gray}}
\put(24,18){\colorbox{gray}}
\put(24,22){\colorbox{gray}}
\put(24,27){\colorbox{gray}}

\put(30,18){\colorbox{gray}}
\put(30,22){\colorbox{gray}}
\put(30,27){\colorbox{gray}}
\put(34,18){\colorbox{gray}}
\put(34,22){\colorbox{gray}}
\put(34,27){\colorbox{gray}}
\put(39,18){\colorbox{gray}}
\put(39,22){\colorbox{gray}}
\put(39,27){\colorbox{gray}}

\put(45,18){\colorbox{gray}}
\put(45,22){\colorbox{gray}}
\put(45,27){\colorbox{gray}}
\put(49,18){\colorbox{gray}}
\put(49,22){\colorbox{gray}}
\put(49,27){\colorbox{gray}}
\put(54,18){\colorbox{gray}}
\put(54,22){\colorbox{gray}}
\put(54,27){\colorbox{gray}}

\put(60,18){\colorbox{gray}}
\put(60,22){\colorbox{gray}}
\put(60,27){\colorbox{gray}}
\put(64,18){\colorbox{gray}}
\put(64,22){\colorbox{gray}}
\put(64,27){\colorbox{gray}}
\put(69,18){\colorbox{gray}}
\put(69,22){\colorbox{gray}}
\put(69,27){\colorbox{gray}}

\put(0,3){\colorbox{gray}}
\put(0,7){\colorbox{gray}}
\put(0,12){\colorbox{gray}}
\put(4,3){\colorbox{gray}}
\put(4,7){\colorbox{gray}}
\put(4,12){\colorbox{gray}}
\put(9,3){\colorbox{gray}}
\put(9,7){\colorbox{gray}}
\put(9,12){\colorbox{gray}}

\put(15,3){\colorbox{gray}}
\put(15,7){\colorbox{gray}}
\put(15,12){\colorbox{gray}}
\put(19,3){\colorbox{gray}}
\put(19,7){\colorbox{gray}}
\put(19,12){\colorbox{gray}}
\put(24,3){\colorbox{gray}}
\put(24,7){\colorbox{gray}}
\put(24,12){\colorbox{gray}}

\put(30,3){\colorbox{gray}}
\put(30,7){\colorbox{gray}}
\put(30,12){\colorbox{gray}}
\put(34,3){\colorbox{gray}}
\put(34,7){\colorbox{gray}}
\put(34,12){\colorbox{gray}}
\put(39,3){\colorbox{gray}}
\put(39,7){\colorbox{gray}}
\put(39,12){\colorbox{gray}}

\put(45,3){\colorbox{gray}}
\put(45,7){\colorbox{gray}}
\put(45,12){\colorbox{gray}}
\put(49,3){\colorbox{gray}}
\put(49,7){\colorbox{gray}}
\put(49,12){\colorbox{gray}}
\put(54,3){\colorbox{gray}}
\put(54,7){\colorbox{gray}}
\put(54,12){\colorbox{gray}}

\put(60,3){\colorbox{gray}}
\put(60,7){\colorbox{gray}}
\put(60,12){\colorbox{gray}}
\put(64,3){\colorbox{gray}}
\put(64,7){\colorbox{gray}}
\put(64,12){\colorbox{gray}}
\put(69,3){\colorbox{gray}}
\put(69,7){\colorbox{gray}}
\put(69,12){\colorbox{gray}}

\put(0,0){\framebox(15,15)}
\put(15,0){\framebox(15,15)}
\put(30,0){\framebox(15,15)}
\put(45,0){\framebox(15,15)}
\put(60,0){\framebox(15,15)}
\put(0,15){\framebox(15,15)}
\put(15,15){\framebox(15,15)}
\put(30,15){\framebox(15,15)}
\put(45,15){\framebox(15,15)}
\put(60,15){\framebox(15,15)}
\put(0,30){\framebox(15,15)}
\put(15,30){\framebox(15,15)}
\put(30,30){\framebox(15,15)}
\put(45,30){\framebox(15,15)}
\put(60,30){\framebox(15,15)}
\put(0,45){\framebox(15,15)}
\put(15,45){\framebox(15,15)}
\put(30,45){\framebox(15,15)}
\put(45,45){\framebox(15,15)}
\put(60,45){\framebox(15,15)}
\put(0,60){\framebox(15,15)}
\put(15,60){\framebox(15,15)}
\put(30,60){\framebox(15,15)}
\put(45,60){\framebox(15,15)}
\put(60,60){\framebox(15,15)}
\end{picture}
\end{center}
\vspace{-10pt}
\caption{The configuration corresponding to $h=(3,5,5,5,5)$.}
\label{picture:typeAHessenbergFunction}
\end{figure}
\end{example}

Let us denote a positive root by
\begin{equation*} 
\alpha_{i,j}=x_i-x_j \ \ \ \ \ {\rm for} \ 1 \leq i < j \leq n
\end{equation*}
and define $\Phi^+_i=\{\alpha_{i,j} \mid i < j \leq n \}$.
Then, one can see that the set of lower ideals $I \subset \Phi^+_{A_{n-1}}=\coprod_{i=1}^{n-1} \ \Phi^+_i$ and the set of Hessenberg functions $h$ for type $A_{n-1}$ are in one-to-one correspondence which sends $I$ to $h_I$ in \eqref{eq:Hessft}.
Note that we extend a Hessenberg function $h_I:[n-1] \to [n-1]$ to $h_I:[n] \to [n]$ with $h_I(n)=n$. 

\begin{example}
The lower ideal associated with a Hessenberg function $h=(3,5,5,5,5)$ consists of positive roots in shaded boxes of $h$ as shown in Figure~\ref{picture:TypeALowerIdeal}.
\begin{figure}[h]
\begin{center}
\begin{picture}(150,50)
\put(0,43){\colorbox{gray}}
\put(0,47){\colorbox{gray}}
\put(5,43){\colorbox{gray}}
\put(5,47){\colorbox{gray}}
\put(10,43){\colorbox{gray}}
\put(10,47){\colorbox{gray}}
\put(15,43){\colorbox{gray}}
\put(15,47){\colorbox{gray}}
\put(20,43){\colorbox{gray}}
\put(20,47){\colorbox{gray}}
\put(24,43){\colorbox{gray}}
\put(24,47){\colorbox{gray}}

\put(30,43){\colorbox{gray}}
\put(30,47){\colorbox{gray}}
\put(35,43){\colorbox{gray}}
\put(35,47){\colorbox{gray}}
\put(40,43){\colorbox{gray}}
\put(40,47){\colorbox{gray}}
\put(45,43){\colorbox{gray}}
\put(45,47){\colorbox{gray}}
\put(50,43){\colorbox{gray}}
\put(50,47){\colorbox{gray}}
\put(54,43){\colorbox{gray}}
\put(54,47){\colorbox{gray}}

\put(60,43){\colorbox{gray}}
\put(60,47){\colorbox{gray}}
\put(65,43){\colorbox{gray}}
\put(65,47){\colorbox{gray}}
\put(70,43){\colorbox{gray}}
\put(70,47){\colorbox{gray}}
\put(75,43){\colorbox{gray}}
\put(75,47){\colorbox{gray}}
\put(80,43){\colorbox{gray}}
\put(80,47){\colorbox{gray}}
\put(84,43){\colorbox{gray}}
\put(84,47){\colorbox{gray}}

\put(0,33){\colorbox{gray}}
\put(0,37){\colorbox{gray}}
\put(5,33){\colorbox{gray}}
\put(5,37){\colorbox{gray}}
\put(10,33){\colorbox{gray}}
\put(10,37){\colorbox{gray}}
\put(15,33){\colorbox{gray}}
\put(15,37){\colorbox{gray}}
\put(20,33){\colorbox{gray}}
\put(20,37){\colorbox{gray}}
\put(24,33){\colorbox{gray}}
\put(24,37){\colorbox{gray}}

\put(30,33){\colorbox{gray}}
\put(30,37){\colorbox{gray}}
\put(35,33){\colorbox{gray}}
\put(35,37){\colorbox{gray}}
\put(40,33){\colorbox{gray}}
\put(40,37){\colorbox{gray}}
\put(45,33){\colorbox{gray}}
\put(45,37){\colorbox{gray}}
\put(50,33){\colorbox{gray}}
\put(50,37){\colorbox{gray}}
\put(54,33){\colorbox{gray}}
\put(54,37){\colorbox{gray}}

\put(60,33){\colorbox{gray}}
\put(60,37){\colorbox{gray}}
\put(65,33){\colorbox{gray}}
\put(65,37){\colorbox{gray}}
\put(70,33){\colorbox{gray}}
\put(70,37){\colorbox{gray}}
\put(75,33){\colorbox{gray}}
\put(75,37){\colorbox{gray}}
\put(80,33){\colorbox{gray}}
\put(80,37){\colorbox{gray}}
\put(84,33){\colorbox{gray}}
\put(84,37){\colorbox{gray}}

\put(90,33){\colorbox{gray}}
\put(90,37){\colorbox{gray}}
\put(95,33){\colorbox{gray}}
\put(95,37){\colorbox{gray}}
\put(100,33){\colorbox{gray}}
\put(100,37){\colorbox{gray}}
\put(105,33){\colorbox{gray}}
\put(105,37){\colorbox{gray}}
\put(110,33){\colorbox{gray}}
\put(110,37){\colorbox{gray}}
\put(114,33){\colorbox{gray}}
\put(114,37){\colorbox{gray}}

\put(120,33){\colorbox{gray}}
\put(120,37){\colorbox{gray}}
\put(125,33){\colorbox{gray}}
\put(125,37){\colorbox{gray}}
\put(130,33){\colorbox{gray}}
\put(130,37){\colorbox{gray}}
\put(135,33){\colorbox{gray}}
\put(135,37){\colorbox{gray}}
\put(140,33){\colorbox{gray}}
\put(140,37){\colorbox{gray}}
\put(144,33){\colorbox{gray}}
\put(144,37){\colorbox{gray}}

\put(0,23){\colorbox{gray}}
\put(0,27){\colorbox{gray}}
\put(5,23){\colorbox{gray}}
\put(5,27){\colorbox{gray}}
\put(10,23){\colorbox{gray}}
\put(10,27){\colorbox{gray}}
\put(15,23){\colorbox{gray}}
\put(15,27){\colorbox{gray}}
\put(20,23){\colorbox{gray}}
\put(20,27){\colorbox{gray}}
\put(24,23){\colorbox{gray}}
\put(24,27){\colorbox{gray}}

\put(30,23){\colorbox{gray}}
\put(30,27){\colorbox{gray}}
\put(35,23){\colorbox{gray}}
\put(35,27){\colorbox{gray}}
\put(40,23){\colorbox{gray}}
\put(40,27){\colorbox{gray}}
\put(45,23){\colorbox{gray}}
\put(45,27){\colorbox{gray}}
\put(50,23){\colorbox{gray}}
\put(50,27){\colorbox{gray}}
\put(54,23){\colorbox{gray}}
\put(54,27){\colorbox{gray}}

\put(60,23){\colorbox{gray}}
\put(60,27){\colorbox{gray}}
\put(65,23){\colorbox{gray}}
\put(65,27){\colorbox{gray}}
\put(70,23){\colorbox{gray}}
\put(70,27){\colorbox{gray}}
\put(75,23){\colorbox{gray}}
\put(75,27){\colorbox{gray}}
\put(80,23){\colorbox{gray}}
\put(80,27){\colorbox{gray}}
\put(84,23){\colorbox{gray}}
\put(84,27){\colorbox{gray}}

\put(90,23){\colorbox{gray}}
\put(90,27){\colorbox{gray}}
\put(95,23){\colorbox{gray}}
\put(95,27){\colorbox{gray}}
\put(100,23){\colorbox{gray}}
\put(100,27){\colorbox{gray}}
\put(105,23){\colorbox{gray}}
\put(105,27){\colorbox{gray}}
\put(110,23){\colorbox{gray}}
\put(110,27){\colorbox{gray}}
\put(114,23){\colorbox{gray}}
\put(114,27){\colorbox{gray}}

\put(120,23){\colorbox{gray}}
\put(120,27){\colorbox{gray}}
\put(125,23){\colorbox{gray}}
\put(125,27){\colorbox{gray}}
\put(130,23){\colorbox{gray}}
\put(130,27){\colorbox{gray}}
\put(135,23){\colorbox{gray}}
\put(135,27){\colorbox{gray}}
\put(140,23){\colorbox{gray}}
\put(140,27){\colorbox{gray}}
\put(144,23){\colorbox{gray}}
\put(144,27){\colorbox{gray}}

\put(0,13){\colorbox{gray}}
\put(0,17){\colorbox{gray}}
\put(5,13){\colorbox{gray}}
\put(5,17){\colorbox{gray}}
\put(10,13){\colorbox{gray}}
\put(10,17){\colorbox{gray}}
\put(15,13){\colorbox{gray}}
\put(15,17){\colorbox{gray}}
\put(20,13){\colorbox{gray}}
\put(20,17){\colorbox{gray}}
\put(24,13){\colorbox{gray}}
\put(24,17){\colorbox{gray}}

\put(30,13){\colorbox{gray}}
\put(30,17){\colorbox{gray}}
\put(35,13){\colorbox{gray}}
\put(35,17){\colorbox{gray}}
\put(40,13){\colorbox{gray}}
\put(40,17){\colorbox{gray}}
\put(45,13){\colorbox{gray}}
\put(45,17){\colorbox{gray}}
\put(50,13){\colorbox{gray}}
\put(50,17){\colorbox{gray}}
\put(54,13){\colorbox{gray}}
\put(54,17){\colorbox{gray}}

\put(60,13){\colorbox{gray}}
\put(60,17){\colorbox{gray}}
\put(65,13){\colorbox{gray}}
\put(65,17){\colorbox{gray}}
\put(70,13){\colorbox{gray}}
\put(70,17){\colorbox{gray}}
\put(75,13){\colorbox{gray}}
\put(75,17){\colorbox{gray}}
\put(80,13){\colorbox{gray}}
\put(80,17){\colorbox{gray}}
\put(84,13){\colorbox{gray}}
\put(84,17){\colorbox{gray}}

\put(90,13){\colorbox{gray}}
\put(90,17){\colorbox{gray}}
\put(95,13){\colorbox{gray}}
\put(95,17){\colorbox{gray}}
\put(100,13){\colorbox{gray}}
\put(100,17){\colorbox{gray}}
\put(105,13){\colorbox{gray}}
\put(105,17){\colorbox{gray}}
\put(110,13){\colorbox{gray}}
\put(110,17){\colorbox{gray}}
\put(114,13){\colorbox{gray}}
\put(114,17){\colorbox{gray}}

\put(120,13){\colorbox{gray}}
\put(120,17){\colorbox{gray}}
\put(125,13){\colorbox{gray}}
\put(125,17){\colorbox{gray}}
\put(130,13){\colorbox{gray}}
\put(130,17){\colorbox{gray}}
\put(135,13){\colorbox{gray}}
\put(135,17){\colorbox{gray}}
\put(140,13){\colorbox{gray}}
\put(140,17){\colorbox{gray}}
\put(144,13){\colorbox{gray}}
\put(144,17){\colorbox{gray}}

\put(0,3){\colorbox{gray}}
\put(0,7){\colorbox{gray}}
\put(5,3){\colorbox{gray}}
\put(5,7){\colorbox{gray}}
\put(10,3){\colorbox{gray}}
\put(10,7){\colorbox{gray}}
\put(15,3){\colorbox{gray}}
\put(15,7){\colorbox{gray}}
\put(20,3){\colorbox{gray}}
\put(20,7){\colorbox{gray}}
\put(24,3){\colorbox{gray}}
\put(24,7){\colorbox{gray}}

\put(30,3){\colorbox{gray}}
\put(30,7){\colorbox{gray}}
\put(35,3){\colorbox{gray}}
\put(35,7){\colorbox{gray}}
\put(40,3){\colorbox{gray}}
\put(40,7){\colorbox{gray}}
\put(45,3){\colorbox{gray}}
\put(45,7){\colorbox{gray}}
\put(50,3){\colorbox{gray}}
\put(50,7){\colorbox{gray}}
\put(54,3){\colorbox{gray}}
\put(54,7){\colorbox{gray}}

\put(60,3){\colorbox{gray}}
\put(60,7){\colorbox{gray}}
\put(65,3){\colorbox{gray}}
\put(65,7){\colorbox{gray}}
\put(70,3){\colorbox{gray}}
\put(70,7){\colorbox{gray}}
\put(75,3){\colorbox{gray}}
\put(75,7){\colorbox{gray}}
\put(80,3){\colorbox{gray}}
\put(80,7){\colorbox{gray}}
\put(84,3){\colorbox{gray}}
\put(84,7){\colorbox{gray}}

\put(90,3){\colorbox{gray}}
\put(90,7){\colorbox{gray}}
\put(95,3){\colorbox{gray}}
\put(95,7){\colorbox{gray}}
\put(100,3){\colorbox{gray}}
\put(100,7){\colorbox{gray}}
\put(105,3){\colorbox{gray}}
\put(105,7){\colorbox{gray}}
\put(110,3){\colorbox{gray}}
\put(110,7){\colorbox{gray}}
\put(114,3){\colorbox{gray}}
\put(114,7){\colorbox{gray}}

\put(120,3){\colorbox{gray}}
\put(120,7){\colorbox{gray}}
\put(125,3){\colorbox{gray}}
\put(125,7){\colorbox{gray}}
\put(130,3){\colorbox{gray}}
\put(130,7){\colorbox{gray}}
\put(135,3){\colorbox{gray}}
\put(135,7){\colorbox{gray}}
\put(140,3){\colorbox{gray}}
\put(140,7){\colorbox{gray}}
\put(144,3){\colorbox{gray}}
\put(144,7){\colorbox{gray}}

\put(0,40){\framebox(30,10)} 
\put(30,40){\framebox(30,10){\tiny $x_1-x_2$}}
\put(60,40){\framebox(30,10){\tiny $x_1-x_3$}}
\put(90,40){\framebox(30,10){\tiny $x_1-x_4$}}
\put(120,40){\framebox(30,10){\tiny $x_1-x_5$}}

\put(0,30){\framebox(30,10)} 
\put(30,30){\framebox(30,10)}
\put(60,30){\framebox(30,10){\tiny $x_2-x_3$}}
\put(90,30){\framebox(30,10){\tiny $x_2-x_4$}}
\put(120,30){\framebox(30,10){\tiny $x_2-x_5$}}

\put(0,20){\framebox(30,10)} 
\put(30,20){\framebox(30,10)}
\put(60,20){\framebox(30,10)}
\put(90,20){\framebox(30,10){\tiny $x_3-x_4$}}
\put(120,20){\framebox(30,10){\tiny $x_3-x_5$}}

\put(0,10){\framebox(30,10)} 
\put(30,10){\framebox(30,10)}
\put(60,10){\framebox(30,10)}
\put(90,10){\framebox(30,10)}
\put(120,10){\framebox(30,10){\tiny $x_4-x_5$}}

\put(0,0){\framebox(30,10)} 
\put(30,0){\framebox(30,10)}
\put(60,0){\framebox(30,10)}
\put(90,0){\framebox(30,10)}
\put(120,0){\framebox(30,10)}
\end{picture}
\end{center}
\vspace{-10pt}
\caption{The lower ideal associated with $h=(3,5,5,5,5)$.}
\label{picture:TypeALowerIdeal}
\end{figure} 
\end{example}

We now explain the polynomials $f^{A_{n-1}}_{i,j}$ given in \cite{AHHM} which are used for describing the cohomorogy ring $H^*(\Hess(N,h))$. 
For $1 \leq i \leq j \leq n$, define a polynomial 
$$
f^{A_{n-1}}_{i,j} := \sum_{k=1}^i \left( \prod_{\ell=i+1}^{j} (x_k-x_\ell)\right) x_k
$$
with the convention $\prod_{\ell=i+1}^{j} (x_k-x_\ell)=1$ whenever $j=i$.

\begin{theorem}$($\cite[Theorem~A]{AHHM}, see also \cite[Corollary~10.4]{AHMMS}$)$ \label{theorem:cohomologyA}
Let $h$ be a Hessenberg function for type $A_{n-1}$ and $\Hess(N,h)$ the associated regular nilpotent Hessenberg variety.
Then there is an isomorphism of graded $\R$-algebras
\begin{equation*}
H^*(\Hess(N,h)) \cong \R[x_1,\ldots,x_n]/(f^{A_{n-1}}_{1,h(1)}, \ldots, f^{A_{n-1}}_{n,h(n)})
\end{equation*}
which sends each root $\alpha$ to the Euler class $e(L_{\alpha})|_{\Hess(N,h)}$.
\end{theorem}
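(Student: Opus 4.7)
The plan is to identify $R := \R[x_1,\ldots,x_n]/(f^{A_{n-1}}_{1,h(1)},\ldots,f^{A_{n-1}}_{n,h(n)})$ with $H^*(\Hess(N,h))$ by constructing a surjection from $R$ onto $H^*(\Hess(N,h))$, then verifying that both rings have the same Hilbert series/Poincar\'e polynomial. More precisely, the argument proceeds in three steps.

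\emph{Step 1: The polynomials $f^{A_{n-1}}_{i,h(i)}$ lie in $\mathfrak{a}(h) = \ker \varphi_h$.} By Theorem~\ref{theorem:AHMMS2} and the commutative diagram \eqref{eq:AHMMScd}, it is equivalent to show that the images of these polynomials vanish in $H^*(\Hess(S,h))^W$. For this I would use the GKM-type localization: $H_T^*(\Hess(S,h))$ injects into $\bigoplus_{w \in W} H_T^*(\pt)$, and the vanishing of $\varphi_h(f^{A_{n-1}}_{i,h(i)})$ can be checked by evaluating $f^{A_{n-1}}_{i,h(i)}$ with each $x_k$ replaced by $w(x_k)$ at every $T$-fixed point $w$, then confirming that the resulting element of the GKM ring actually comes from an ordinary cohomology class (which uses the definition of $h$ and the telescoping structure of the defining product in $f^{A_{n-1}}_{i,h(i)}$). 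This gives a surjective ring homomorphism $R \twoheadrightarrow H^*(\Hess(N,h))$ sending $x_i$ to $e(L_{-x_i})|_{\Hess(N,h)}$ (or the relevant character convention).

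\emph{Step 2: $R$ is Artinian.} Since the ideal is generated by $n$ polynomials with $\deg f^{A_{n-1}}_{i,h(i)} = h(i)-i+1$, it suffices to prove $R$ is finite-dimensional over $\R$; equivalently, each variable $x_k$ is nilpotent in $R$. The classical case $h=(n,n,\ldots,n)$ reduces to the Borel presentation of $H^*(G/B)$ (the $f^{A_{n-1}}_{i,n}$ generate the same ideal as the elementary symmetric polynomials $e_1,\ldots,e_n$), which is Artinian. For general $h$ one can either argue directly, by extracting from $f^{A_{n-1}}_{i,h(i)}$ a monic polynomial in $x_i$ modulo the lower-indexed relations, or (more robustly) use the surjection from Step 1 together with the known finite-dimensionality of $H^*(\Hess(N,h))$ to conclude that $R$ is Artinian.

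\emph{Step 3: Compare Hilbert series.} Once $R$ is Artinian and presented by $n$ homogeneous generators in a polynomial ring in $n$ variables, Lemma~\ref{lemma:wellknown} applies: $R$ is a complete intersection and
\[
\Hilb(R,\t)=\prod_{i=1}^n \bigl(1+\t+\t^2+\cdots+\t^{h(i)-i}\bigr).
\]
This coincides, after substituting $\t = t$, with $\Poin(\Hess(N,h),\sqrt{t})$ from \eqref{eq:PoinHess}. Therefore the surjection of Step 1 must be an isomorphism in each degree, proving the theorem.

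The main obstacle is \emph{Step 1}: verifying that each $f^{A_{n-1}}_{i,h(i)}$ actually lies in $\mathfrak{a}(h)$. Step 2 is essentially bootstrapped from Step 1 (or from the flag variety base case), and Step 3 is a clean dimension count once Lemma~\ref{lemma:wellknown} is invoked. The computation in Step 1 is where the specific combinatorial shape of $f^{A_{n-1}}_{i,j}$---a sum over the first $i$ coordinates of a product running precisely up to index $j = h(i)$---is forced on us by the geometry of $\Hess(N,h)$, and it is this matching between the cutoff $j = h(i)$ and the GKM edges of $\Hess(S,h)$ that makes the presentation work.
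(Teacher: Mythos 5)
The paper does not prove Theorem~\ref{theorem:cohomologyA} at all: it quotes it from \cite{AHHM} (see also \cite{AHMMS}), so there is no internal proof to compare with. Your outline reproduces the architecture of the cited proof in \cite{AHHM} (show the relations lie in the kernel of \eqref{eq:varphiNilpotent} by localization, get a surjection, establish a complete intersection, and match Hilbert series against \eqref{eq:PoinHess}), but as written it has two genuine gaps, one of which is the mathematical core of the theorem.

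First, the fallback in Step~2 is logically invalid: a surjection $R\twoheadrightarrow H^*(\Hess(N,h))$ onto a finite-dimensional algebra puts no bound whatsoever on $\dim_\R R$, so you cannot ``bootstrap'' Artinianness of $R$ from Step~1. The Artinian property must be proved independently --- for instance by showing that the common zero locus of $f^{A_{n-1}}_{1,h(1)},\ldots,f^{A_{n-1}}_{n,h(n)}$ in $\C^n$ is the origin, or directly that these polynomials form a regular sequence --- and this is precisely the hypothesis Lemma~\ref{lemma:wellknown} requires before Step~3 can run; your alternative of ``extracting a monic polynomial in $x_i$ modulo the lower-indexed relations'' is the right kind of argument but is only asserted, and it genuinely needs an induction on $i$ since $f^{A_{n-1}}_{i,h(i)}$ is not monic in $x_i$ on the nose because of the terms with $k<i$. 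Second, Step~1, which you correctly identify as the crux, does not work as described: $\varphi_h(f^{A_{n-1}}_{i,h(i)})$ is an ordinary cohomology class of positive degree, so its restriction to any $T$-fixed point vanishes automatically, and substituting $w(x_k)$ at fixed points only computes restrictions of an \emph{equivariant} lift in $H_T^*(\Hess(S,I))$. To conclude that the ordinary class is zero you must show that this equivariant element lies in $H^{>0}(BT)\cdot H_T^*(\Hess(S,I))$ (equivalently, carry out the equivariant computation of \cite{AHHM} and then specialize the equivariant parameters to zero); the condition you state, that the element of the GKM ring ``comes from an ordinary cohomology class,'' is not the relevant one. Since the containment $f^{A_{n-1}}_{i,h(i)}\in\mathfrak{a}(I)$ is never actually established, the proposal is an outline of the known strategy rather than a proof.
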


By abuse of notation, we think of $\alpha$ as the Euler class $e(L_{\alpha})|_{\Hess(N,h)}$ for simplicity.
We restate Theorem~\ref{theorem_intro} for type $A_{n-1}$ as follows.

\begin{theorem} \label{theorem:basisA}
Let $h$ be a Hessenberg function for type $A_{n-1}$ and $\Hess(N,h)$ the associated regular nilpotent Hessenberg variety.
We fix a permutation $w^{(i)}$ on a set $\{i+1, i+2, \dots, h(i)\}$ for each $i=1,\ldots,n-1$.
Then, the following set  
\begin{equation} \label{eq:basisApermutation}
\left\{\prod_{i=1}^{n-1} \alpha_{i,w^{(i)}(h(i))} \cdot \alpha_{i,w^{(i)}(h(i)-1)} \cdots \alpha_{i,w^{(i)}(h(i)-\m_i+1)} \ \middle| \ 0 \leq \m_i \leq h(i)-i \right\}
\end{equation}
forms a basis for the cohomology $H^*(\Hess(N,h))$ over $\R$.
Here, we take the convention $\alpha_{i,w^{(i)}(h(i))} \cdot \alpha_{i,w^{(i)}(h(i)-1)} \cdots \alpha_{i,w^{(i)}(h(i)-\m_i+1)}=1$ whenever $\m_i=0$.
\end{theorem}

Note that a permutation $w^{(i)}$ in Theorem~\ref{theorem:basisA} determines an order on positive roots in $I \cap \Phi^+_i$.

\begin{example}
Let us consider a Hessenberg function $h=(3,5,5,5,5)$. 
We take the identity map as a permutation $w^{(i)}$ for each $1 \leq i \leq 5$. 
Then, the factors of $\alpha_{i,h(i)} \cdot \alpha_{i,h(i)-1} \cdots \alpha_{i,h(i)-m_i+1}$ pictorially show the right-most $m_i$'s positive roots of shaded boxes in the $i$-th row (see Figure~\ref{picture:TypeALowerIdeal}).
For example, if we take $(m_1,m_2,m_3,m_4)=(1,2,1,0)$, then the product in \eqref{eq:basisApermutation} is equal to 
$$
(\alpha_{1,3})\cdot(\alpha_{2,5} \cdot \alpha_{2,4})\cdot(\alpha_{3,5})\cdot 1=(x_1-x_3)(x_2-x_5)(x_2-x_4)(x_3-x_5).
$$
Note that the product above is nothing but the Poincar\'e dual $[\Hess(N,h_0)]$ for $h_0=(2,3,4,5,5)$ by the formula in \eqref{eq:PoincaredualNilpotent_h}.
\end{example}

The basis in Theorem~\ref{theorem:basisA} is, in fact, an extended basis of all Poincar\'e duals $[\Hess(N,h')]$ of smaller regular nilpotent Hessenberg varieties $\Hess(N,h')$ in $H^*(\Hess(N,h))$.
We take the identity maps as permutations $w^{(i)}$ in Theorem~\ref{theorem:basisA}.
For a smaller Hessenberg function $h' \subset h$, we put $m_i=h(i)-h'(i)$ for $1 \leq i \leq n-1$.
Then the cohomology class in \eqref{eq:basisApermutation} is nothing but the Poincar\'e dual $[\Hess(N,h')]$ in $H^*(\Hess(N,h))$ up to a non-zero scalar multiplication by \eqref{eq:PoincaredualNilpotent_h}.

\begin{corollary} \label{theorem:PoincaredualA}
Let $h$ be a Hessenberg function for type $A_{n-1}$ and $\Hess(N,h)$ the associated regular nilpotent Hessenberg variety. 
Then, the set of the Poincar\'e duals 
\begin{equation*} 
\{[\Hess(N,h')] \in H^*(\Hess(N,h)) \mid h' \subset h \}
\end{equation*}
is linearly independent. 
\end{corollary}

We first prove the special case of Theorem~\ref{theorem:basisA} when $\Hess(N,h)$ is the whole flag variety $G/B$. 
For this purpose we need to slightly generalize the definition of $f_{i,j}^{A_{n-1}}$. 
For $i \in [n]$ and a vector $\mathbf{a}_i=(a_{i1}, \ldots, a_{ii}) \in \R^i$, define a polynomial 
\begin{equation*}
f^{\mathbf{a}_i}_{i,n}=f^{\mathbf{a}_i}_{i,n}(x_1,\ldots,x_n):=\sum_{k=1}^i a_{ik} \left( \prod_{\ell=i+1}^{n} (x_k-x_\ell)\right) x_k \in \R[x_1,\ldots,x_n].
\end{equation*}
For $\mathbf{A}_{(n-1)}=(\mathbf{a}_i \in \R^i \mid 1 \leq i \leq n)$ we also define a ring 
\begin{equation} \label{eq:ringA}
R^{\mathbf{A}_{(n-1)}}=R^{\mathbf{A}_{(n-1)}}(x_1,\ldots,x_n):=\R[x_1,\ldots,x_n]/(f^{\mathbf{a}_i}_{i,n} \mid 1 \leq i \leq n).
\end{equation}
If $\mathbf{a}_i=(1, \ldots, 1)$ for all $i \in [n]$, then the ring $R^{\mathbf{A}_{(n-1)}}$ is isomorphic to the cohomology ring of the flag variety for type $A_{n-1}$ by Theorem~\ref{theorem:cohomologyA}.

\begin{lemma} \label{lemma:setsudouringA}
Let $j$ be a positive integer with $2 \leq j \leq n$.
Then the following ring isomorphism 
$$
R^{\mathbf{A}_{(n-1)}}(x_1,\ldots,x_n)/(x_1-x_j) \cong R^{\mathbf{A'}_{(n-2)}}(y_1,\ldots,y_{n-1})
$$
holds for some $\mathbf{A'}_{(n-2)}=(\mathbf{a}'_i \in \R^{i} \mid 1 \leq i \leq n-1)$. 
Here, the isomorphism above is realized by sending $x_{i+1}$ to $y_i$ for $1 \leq i \leq n-1$. 
\end{lemma}

\begin{proof}
It suffices to check that the defining polynomials $f^{\mathbf{a}_i}_{i,n}=f^{\mathbf{a}_i}_{i,n}(x_1,\ldots,x_n)$ of the ring $R^{\mathbf{A}_{(n-1)}}(x_1,\ldots,x_n)$ are congruent to the defining polynomials $f^{\mathbf{a}'_{i-1}}_{i-1,n-1}(y_1,\ldots,y_{n-1})$ of $R^{\mathbf{A'}_{(n-2)}}(y_1,\ldots,y_{n-1})$ modulo $x_1-x_j$. Here we take the convention $f^{\mathbf{a}'_{i-1}}_{i-1,n-1}(y_1,\ldots,y_{n-1})=0$ whenever $i=1$. \\ 
\textbf{Case (i):} Suppose that $i=1$. By the definition of $f^{\mathbf{a}_i}_{i,n}$ we have
\begin{align*}
f^{\mathbf{a}_1}_{1,n}=a_{11} \left(\prod_{\ell=2}^{n} (x_1-x_\ell)\right) x_1 
                         \equiv 0 \ \ \ \ \ \ \ ({\rm mod} \ x_1-x_j). 
\end{align*}
\textbf{Case (ii):} Suppose that $1<i<j$. 
As in the above case we may compute
\begin{align*}
f^{\mathbf{a}_i}_{i,n}&=\sum_{k=1}^i a_{ik} \left(\prod_{\ell=i+1}^{n} (x_k-x_\ell)\right) x_k \\
                         &=a_{i1} \left(\prod_{\ell=i+1}^{n} (x_1-x_\ell)\right) x_1 + \sum_{k=2}^i a_{ik} \left(\prod_{\ell=i+1}^{n} (x_k-x_\ell)\right) x_k \\
                         &\equiv \sum_{k=2}^i a_{ik} \left(\prod_{\ell=i+1}^{n} (x_k-x_\ell)\right) x_k \ \ \ \ \ \ \ ({\rm mod} \ x_1-x_j) \\
                         &=\sum_{k=1}^{i-1} a_{i \, k+1} \left(\prod_{\ell=i}^{n-1} (y_{k}-y_{\ell})\right) y_{k}=f^{\mathbf{a}'_{i-1}}_{i-1,n-1}(y_1,\ldots,y_{n-1})
\end{align*}
where $\mathbf{a}'_{i-1}=(a_{i2}, \ldots, a_{ii})$. \\ \ 
\textbf{Case (iii):} Suppose that $j \leq i \leq n$.
Then we have
\begin{align*}
f^{\mathbf{a}_i}_{i,n}&=\sum_{k=1}^i a_{ik} \left(\prod_{\ell=i+1}^{n} (x_k-x_\ell)\right) x_k \\
                         &=a_{i1} \left(\prod_{\ell=i+1}^{n} (x_1-x_\ell)\right) x_1 + \sum_{k=2}^i a_{ik} \left(\prod_{\ell=i+1}^{n} (x_k-x_\ell)\right) x_k \\
                         &\equiv a_{i1} \left(\prod_{\ell=i+1}^{n} (x_j-x_\ell)\right) x_j + \sum_{k=2}^i a_{ik} \left(\prod_{\ell=i+1}^{n} (x_k-x_\ell)\right) x_k \ \ \ \ \ ({\rm mod} \ x_1-x_j) \\
                         &= \sum_{k=2}^i \tilde a_{ik} \left(\prod_{\ell=i+1}^{n} (x_k-x_\ell)\right) x_k \ \ \ \ \ ({\rm where} \ \tilde a_{ik}:=a_{ik}+\delta_{kj}a_{i1}) \\
                         &=\sum_{k=1}^{i-1} \tilde a_{i \, k+1} \left(\prod_{\ell=i}^{n-1} (y_{k}-y_{\ell})\right) y_{k}=f^{\mathbf{a}'_{i-1}}_{i-1,n-1}(y_1,\ldots,y_{n-1})
\end{align*}
where $\mathbf{a}'_{i-1}=(\tilde a_{i2},\ldots, \tilde a_{ii})$.

In cases (i), (ii), and (iii) above we have shown that 
\begin{equation*}
f^{\mathbf{a}_i}_{i,n}(x_1,\ldots,x_n) \equiv
\begin{cases}
0 \ \ \ &{\rm if} \ i=1 \\
f^{\mathbf{a}'_{i-1}}_{i-1,n-1}(y_1,\ldots,y_{n-1}) \ \ \ &{\rm if} \ 2 \leq i \leq n 
\end{cases}
\ \ \ \ \ ({\rm mod} \ x_1-x_j) 
\end{equation*}
for some vector $\mathbf{a}'_{i-1} \in \R^{i-1} \ (2 \leq i \leq n )$.
This shows $R^{\mathbf{A}_{(n-1)}}(x_1,\ldots,x_n)/(x_1-x_j) \cong R^{\mathbf{A'}_{(n-2)}}(y_1,\ldots,y_{n-1})$, as desired.
\end{proof}

\begin{proposition} \label{proposition:basisAFlag}
Suppose that the ring $R^{\mathbf{A}_{(n-1)}}$ in \eqref{eq:ringA} is Artinian.
We fix a permutation $w^{(i)}$ on a set $\{i+1, i+2, \dots, n\}$ for each $i=1,\ldots,n-1$.
Then, the following monomials in positive roots 
\begin{equation} \label{eq:basisAFlag}
\prod_{i=1}^{n-1} \alpha_{i,w^{(i)}(n)} \cdot \alpha_{i,w^{(i)}(n-1)} \cdots \alpha_{i,w^{(i)}(n-\m_i+1)},
\end{equation}
with $0 \leq \m_i \leq n-i$, form a basis for $R^{\mathbf{A}_{(n-1)}}$ over $\R$.
\end{proposition}

\begin{proof}
From Lemma~\ref{lemma:wellknown} the Hilbert series of $R^{\mathbf{A}_{(n-1)}}$ is equal to $\prod_{i=1}^n (1+\t+\t^2+\cdots+\t^{n-i})$. 
Hence it is enough to prove the linear independence of the monomials in \eqref{eq:basisAFlag}.
We prove this by induction on $n$.

First consider the base case $n=1$.
In this case, the only possible monomial in \eqref{eq:basisAFlag} is $1$ and $R^{\mathbf{A}_{(0)}}=\R$.
This proves the base case.

We proceed to the inductive step. 
Suppose now that $n > 1$ and that the claim holds for $n - 1$, with any allowable
choices of $\mathbf{A'}_{(n-2)}=(\mathbf{a}'_i \in \R^i \mid 1 \leq i \leq n-1)$.
Assume that there exist non-zero constants $c_{m}$ such that
\begin{equation} \label{eq:linearlyindependentA}
\sum_{m=(m_1,\ldots,m_n) \atop 0 \leq \m_i \leq n-i} c_{m} \left( \prod_{i=1}^{n-1} \alpha_{i,w^{(i)}(n)} \cdot \alpha_{i,w^{(i)}(n-1)} \cdots \alpha_{i,w^{(i)}(n-\m_i+1)} \right)=0 \ \ \ \ \ {\rm in} \ R^{\mathbf{A}_{(n-1)}}.
\end{equation}
Let $m_1^{\circ}$ be the minimal number $m_1$ such that $c_{(m_1,\ldots,m_n)} \neq 0$.
We first note that $f^{\mathbf{a}_1}_{1,n}$ is non-zero since $R^{\mathbf{A}_{(n-1)}}$ is Artinian. We may assume without loss of generality that $a_{11} = 1$.
Then we have
\begin{equation} \label{eq:f_{1,n}vanishA}
\left(\prod_{\ell=2}^n \alpha_{1,w^{(1)}(n-\ell+2)} \right)x_1=\left(\prod_{\ell=2}^n \alpha_{1,\ell} \right)x_1=f^{\mathbf{a}_1}_{1,n}=0 \ \ \ \ \  {\rm in} \  R^{\mathbf{A}_{(n-1)}}.
\end{equation}
Noting that $\prod_{i=1}^{n-1} \alpha_{i,w^{(i)}(n)} \cdot \alpha_{i,w^{(i)}(n-1)} \cdots \alpha_{i,w^{(i)}(n-\m_i+1)}$ can be written as 
\begin{equation} \label{eq:productformA}
\left( \prod_{\ell=2}^{m_1+1} \alpha_{1,w^{(1)}(n-\ell+2)} \right) \left( \prod_{i=2}^{n-1} \alpha_{i,w^{(i)}(n)} \cdot \alpha_{i,w^{(i)}(n-1)} \cdots \alpha_{i,w^{(i)}(n-\m_i+1)} \right),
\end{equation}
the monomial in \eqref{eq:productformA} multiplied by the product $\prod_{\ell=m_1^{\circ}+3}^{n}(\alpha_{1,w^{(1)}(n-\ell+2)})x_1$ is equal to
\begin{equation*} 
\begin{cases}
0 & {\rm if} \ m_1 \neq m_1^{\circ} \\
g_n'' \cdot \left( \displaystyle \prod_{i=2}^{n-1} \alpha_{i,w^{(i)}(n)} \cdot \alpha_{i,w^{(i)}(n-1)} \cdots \alpha_{i,w^{(i)}(n-\m_i+1)} \right) & {\rm if} \ m_1 = m_1^{\circ} \ 
\end{cases}
\ \ \ \ \  {\rm in} \  R^{\mathbf{A}_{(n-1)}},
\end{equation*}
by the minimality of $m_1^{\circ}$ and \eqref{eq:f_{1,n}vanishA}. 
Here, the polynomial $g''_n$ is defined by 
$$
g''_n:=\prod_{2 \leq \ell \leq n \atop \ell \neq m_1^{\circ}+2}(\alpha_{1,w^{(1)}(n-\ell+2)})x_1.
$$
Hence, multiplying the both sides of \eqref{eq:linearlyindependentA} by the product $\prod_{\ell=m_1^{\circ}+3}^{n}(\alpha_{1,w^{(1)}(n-\ell+2)})x_1$, we obtain 
\begin{equation} \label{eq:linearlyindependent2A}
g_n'' \cdot \left( \sum_{m=(m_1,\ldots,m_n) \atop 0 \leq \m_i \leq n-i, \ m_1=m_1^{\circ}} c_{m} \left( \prod_{i=2}^{n-1} \alpha_{i,w^{(i)}(n)} \cdot \alpha_{i,w^{(i)}(n-1)} \cdots \alpha_{i,w^{(i)}(n-\m_i+1)} \right) \right)=0 \ \ \ \ \ {\rm in} \ R^{\mathbf{A}_{(n-1)}}.
\end{equation}
Let $g'_n:=\alpha_{1,w^{(1)}(n-m_1^{\circ})}$ and we note that $f^{\mathbf{a}_1}_{1,n}=g'_n \cdot g''_n$.
Then, Lemma~\ref{lemma:key} leads us to the injective multiplication map 
\begin{equation} \label{eq:multiinproof}
\times g''_n: \R[x_1,\ldots,x_n]/(f^{\mathbf{a}_2}_{2,n},\ldots,f^{\mathbf{a}_n}_{n,n},g'_n) \hookrightarrow \R[x_1,\ldots,x_n]/(f^{\mathbf{a}_2}_{2,n},\ldots,f^{\mathbf{a}_n}_{n,n},f^{\mathbf{a}_1}_{1,n})=R^{\mathbf{A}_{(n-1)}}.
\end{equation}
Since $\R[x_1,\ldots,x_n]/(f^{\mathbf{a}_2}_{2,n},\ldots,f^{\mathbf{a}_n}_{n,n},g'_n)$ is isomorphic to $R^{\mathbf{A}_{(n-1)}}/(\alpha_{1,w^{(1)}(n-m_1^{\circ})})$,  
\eqref{eq:linearlyindependent2A} and the injectivity of the map in \eqref{eq:multiinproof} yield the linear relation
\begin{equation} \label{eq:linearlyindependent3A}
\sum_{m=(m_1,\ldots,m_n) \atop 0 \leq \m_i \leq n-i, \ m_1=m_1^{\circ}} c_{m} \left( \prod_{i=2}^{n-1} \alpha_{i,w^{(i)}(n)} \cdot \alpha_{i,w^{(i)}(n-1)} \cdots \alpha_{i,w^{(i)}(n-\m_i+1)} \right) =0 \ \ \ {\rm in} \ R^{\mathbf{A}_{(n-1)}}/(\alpha_{1,w^{(1)}(n-m_1^{\circ})}).
\end{equation}
However, Lemma~\ref{lemma:setsudouringA} gives the isomorphism $R^{\mathbf{A}_{(n-1)}}/(\alpha_{1,w^{(1)}(n-m_1^{\circ})}) \cong R^{\mathbf{A'}_{(n-2)}}(y_1,\ldots,y_{n-1})$ which sends $x_{i+1}$ to $y_i$ for $1 \leq i \leq n-1$.
Setting $\beta_{i,j}:=y_i-y_j=x_{i+1}-x_{j+1}=\alpha_{i+1,j+1}$ for $1 \leq i < j \leq n-1$, we can rewrite \eqref{eq:linearlyindependent3A} as
\begin{equation*} 
\hspace{-10pt}
\sum_{m=(m_1^{\circ},\ell_1,\ldots,\ell_{n-1}) \atop 0 \leq \ell_i \leq n-1-i} c_{m} \left( \prod_{i=1}^{n-2} \beta_{i,v^{(i)}(n-1)} \cdot \beta_{i,v^{(i)}(n-2)} \cdots \beta_{i,v^{(i)}((n-1)-\ell_i+1)} \right) =0 \ \ \ {\rm in} \ R^{\mathbf{A'}_{(n-2)}}(y_1,\ldots,y_{n-1}),
\end{equation*}
where $v^{(i)}$ is the permutation on the set $\{i+1, i+2, \ldots, n-1\}$ defined by $v^{(i)}(s)=w^{(i+1)}(s+1)-1$.
This is a contradiction by the inductive assumption.
Therefore, we proved the linear independence of the monomials in \eqref{eq:basisAFlag}, as desired.
\end{proof}

Now, we are in the position to give a proof of Theorem~\ref{theorem:basisA}.

\begin{proof}[Proof of Theorem~\ref{theorem:basisA}.]
By \eqref{eq:PoinHess} it is enough to prove the linear independence of the cohomology classes in \eqref{eq:basisApermutation}. 
We proceed by decreasing induction on the dimension $d_h:=\sum_{i=1}^n (h(i)-i)$ of $\Hess(N,h)$. 
The base case of the flag variety follows from Proposition~\ref{proposition:basisAFlag}. 
Suppose now that $\Hess(N,h) \subsetneq G/B$ and the claim holds for any Hessenberg function $\tilde h$ with $d_h < d_{\tilde h}$.
Since $h \neq (n,n,\ldots,n)$, we can take a Hessenberg function $\tilde h$ such that $\tilde h(k)=h(k)+1$ for some $k$ and $\tilde h(i)=h(i)$ for any $i \neq k$.
By \eqref{eq:CDGysinNilpotent} the image of a monomial in the set \eqref{eq:basisApermutation} under the Gysin map $\iota_{!}: H^*(\Hess(N,h)) \to H^*(\Hess(N,\tilde h))$ is given by 
\begin{equation*} 
\alpha_{k,\tilde h(k)} \cdot \left( \prod_{i=1}^{n-1} \alpha_{i,w^{(i)}(h(i))} \cdot \alpha_{i,w^{(i)}(h(i)-1)} \cdots \alpha_{i,w^{(i)}(h(i)-\m_i+1)} \right)
\end{equation*}
up to a non-zero scalar multiplication and they are linearly independent in $H^*(\Hess(N,\tilde h))$ by the inductive assumption.
By the injectivity of the Gysin map $\iota_{!}$ (Lemma~\ref{lemma:Gysinmap}) we conclude that the cohomology classes in \eqref{eq:basisApermutation} are linearly independent in $H^*(\Hess(N,h))$. 
This completes the proof. 
\end{proof}

\bigskip

For the rest of this section, we discuss the cases of types $B,C,G$.
A \textbf{Hessenberg function for type $B_n$ and $C_n$} is defined to be a function  $h: [n] \to [2n]$ satisfying the following three conditions
\begin{enumerate}
\item $i \leq h(i) \leq 2n+1-i$ for $i \in [n]$, 
\item if $h(i) \neq 2n+1-i$, then $h(i) \leq h(i+1)$ for $i=1,\ldots, n-1$, 
\item if $h(i) = 2n+1-i$, then $h(i+1) = 2n+1-(i+1)$ for $i=1,\ldots, n-1$.
\end{enumerate}
We also define a \textbf{Hessenberg function for type $G_2$} as a function $h:\{1,2,3\} \rightarrow \{1,2,\ldots,6\}$ such that
\begin{enumerate}
\item $1 \leq h(1) \leq 6$, $2 \leq h(2) \leq 3$, $h(3)=3$, 
\item if $h(1) \geq 3$, then $h(2)=3$. 
\end{enumerate}
We define a subset $\Phi^+_i=\{\alpha_{i,j} \mid i < j \leq 2n+1-i \}$ of positive roots as follows: 
\begin{align*} 
&{\rm Type} \ B_n \ \ \ 
\alpha_{i,j}=
\begin{cases}
x_i-x_j  \ \ \ \ \ &{\rm if} \ i+1 \leq j \leq n, \\ 
x_i  \ \ \ \ \ &{\rm if} \ j=n+1, \\ 
x_i+x_{2n+2-j}  \ \ \ \ \ &{\rm if} \ n+2 \leq j \leq 2n+1-i. 
\end{cases}\\ 
&{\rm Type} \ C_n \ \ \ 
\alpha_{i,j}=
\begin{cases}
x_i-x_j  \ \ \ \ \ &{\rm if} \ i+1 \leq j \leq n, \\ 
x_i+x_{2n+1-j}  \ \ \ \ \ &{\rm if} \ n+1 \leq j \leq 2n-i, \\
2x_i  \ \ \ \ \ &{\rm if} \ j=2n+1-i. 
\end{cases}\\ 
&{\rm Type} \ G_2 \ \ \ 
\alpha_{1,2}=x_1-x_2, \ \ \ \alpha_{1,3}=-x_1+x_3, \ \ \ \alpha_{1,4}=-x_2+x_3, \\ 
& \ \ \ \ \ \ \ \ \ \ \ \ \ \, \, \, \alpha_{1,5}=x_1-2x_2+x_3, \ \ \ \alpha_{1,6}=-x_1-x_2+2x_3, \\ 
& \ \ \ \ \ \ \ \ \ \ \ \ \ \, \, \, \alpha_{2,3}=-2x_1+x_2+x_3.
\end{align*}
Similarly to type $A$, we can pictorially describe a Hessenberg function $h$ by drawing a configuration of shaded boxes such that the lower ideal associated with $h$ consists of positive roots in the shaded boxes (see \cite[Sections~10.3 and 10.4]{AHMMS} or \cite[Sections~5.2 and 5.3]{EHNT1}).
By \cite[Corollaries~10.10, 10.15, and 10.18]{AHMMS} we obtain an explicit presentation of the cohomology ring of $\Hess(N,h)$ in types $B_n$, $C_n$, and $G_2$.
A similar argument using the explicit presentation for $H^*(\Hess(N,h))$ yields the proof of Theorem~\ref{theorem_intro} for types $B,C,G$, as described below.
Recall that the argument in type $A$ is based on the following two properties.
\begin{enumerate}
\item[(I)]  The Gysin map $\iota_{!}: H^*(\Hess(N,h)) \to H^*(\Hess(N,\tilde h))$ is injective and it is the multiplication map by certain positive root if $\Hess(N,h)$ is codimension one in $\Hess(N,\tilde h)$ (cf. Lemma~\ref{lemma:Gysinmap}).
\item[(II)] The generalized rings $R^{\mathbf{A}_{(n-1)}}$ of the cohomology rings of the flag varieties which we introduced have a good property (Lemma~\ref{lemma:setsudouringA}). 
\end{enumerate}
In fact, the property~(I) implies that the statement of Theorem~\ref{theorem:basisA} can be reduced to the case when $\Hess(N,h)$ is the flag variety. 
When $\Hess(N,h)$ is the flag variety, the statement of Theorem~\ref{theorem:basisA} can be proved by the property~(II) above (Proposition~\ref{proposition:basisAFlag}).

Note that the property~(I) holds for arbitrary Lie types. In type $B_n$ one can consider generalized rings $R^{\mathbf{B}_{(n)}}$ of $H^*(G/B)$ which have a good property as in Lemma~\ref{lemma:setsudouringA}. As a consequence, we can prove Theorem~\ref{theorem:Intro_ABCDE6FG} for type $B$. 
Since the cohomology rings of the flag varieties of types $B_n$ and $C_n$ are isomorphic, one can easily see that the statement of Theorem~\ref{theorem:Intro_ABCDE6FG} holds when $\Hess(N,h)$ is the type $C$ flag variety. By using the property~(I) above, we obtain the desired additive basis of $H^*(\Hess(N,h))$ in Theorem~\ref{theorem:Intro_ABCDE6FG} for type $C$.
Also, a similar manipulation yields Theorem~\ref{theorem:Intro_ABCDE6FG} for type $G_2$.

\bigskip

\section{Type $D$}
\label{section:typeD}

In this section we give an outline of the proof of Theorem~\ref{theorem_intro_typeD}. More details are explained in Appendix~\ref{appendix:A}.

We define a \textbf{Hessenberg function for type $D_n$} by a function $h: [n] \to [2n-1]$ satisfying the following conditions:
\begin{enumerate}
\item $i \leq h(i) \leq 2n-1-i$ for $i=1,\ldots, n-1$; 
\item $n \leq h(n) \leq 2n-1$; 
\item if $h(i) \neq 2n-1-i$, then $h(i) \leq h(i+1)$ for $i=1,\ldots, n-2$;
\item if $h(i) = 2n-1-i$, then $h(i+1) = 2n-1-(i+1)$ for $i=1,\ldots, n-2$;
\item if $h(i) \geq n+1$, then $h(n) \geq 2n-i$ for $i=1,\ldots, n-2$;\footnote{The condition $(5)$ is true for $i=n-1$ because $h(n-1) = n-1$ or $h(n-1)=n$.}
\item if $h(n) \geq 2n-i$, then $h(i) \geq n-1$ for $i=1,\ldots, n-2$.
\end{enumerate}
Similarly to type $A$, we can also pictorially describe a Hessenberg function $h$ by drawing a configuration of shaded boxes in this case (\cite[Section~5.4]{EHNT1}).
For this purpose we need to explain the coordinate in type $D_n$.
Drawing a configuration of boxes on a square grid of size $n \times (2n-1)$, to each box in the $i$-th row and the $j$-th column with $i \neq n$ and $j \neq n+1$ we assign the coordinate as follows:
\begin{align*}
&(i,j) \ \ \ \ \ \ \ \, \, {\rm if} \ 1 \leq i \leq n-1 \ {\rm and} \ i \leq j \leq n, \\
&(i,j-1) \ \ \ {\rm if} \ 1 \leq i \leq n-1 \ {\rm and} \ n+2 \leq  j \leq 2n-i.
\end{align*}
We also assign to each box in the $i$-th row and the $(n+1)$-th column the coordinate $(n,2n-i)$.
Then, we may think of a Hessenberg function $h$ as a configuration of shaded boxes whose coordinates $(i,j)$ satisfy $i \in [n]$ and $i \leq j \leq h(i)$.

\begin{example}
Let $n=4$. Then $h=(3,5,4,7)$ is a Hessenberg function for type $D_4$ and the configuration of shaded boxes is shown in Figure~\ref{picture:typeDHessenbergFunction}.

\begin{figure}[h]
\begin{center}
\begin{picture}(210,75)
\put(0,63){\colorbox{gray}}
\put(5,63){\colorbox{gray}}
\put(10,63){\colorbox{gray}}
\put(15,63){\colorbox{gray}}
\put(20,63){\colorbox{gray}}
\put(24,63){\colorbox{gray}}
\put(0,68){\colorbox{gray}}
\put(5,68){\colorbox{gray}}
\put(10,68){\colorbox{gray}}
\put(15,68){\colorbox{gray}}
\put(20,68){\colorbox{gray}}
\put(24,68){\colorbox{gray}}
\put(0,73){\colorbox{gray}}
\put(5,73){\colorbox{gray}}
\put(10,73){\colorbox{gray}}
\put(15,73){\colorbox{gray}}
\put(20,73){\colorbox{gray}}
\put(24,73){\colorbox{gray}}
\put(0,77){\colorbox{gray}}
\put(5,77){\colorbox{gray}}
\put(10,77){\colorbox{gray}}
\put(15,77){\colorbox{gray}}
\put(20,77){\colorbox{gray}}
\put(24,77){\colorbox{gray}}

\put(30,63){\colorbox{gray}}
\put(35,63){\colorbox{gray}}
\put(40,63){\colorbox{gray}}
\put(45,63){\colorbox{gray}}
\put(50,63){\colorbox{gray}}
\put(54,63){\colorbox{gray}}
\put(30,68){\colorbox{gray}}
\put(35,68){\colorbox{gray}}
\put(40,68){\colorbox{gray}}
\put(45,68){\colorbox{gray}}
\put(50,68){\colorbox{gray}}
\put(54,68){\colorbox{gray}}
\put(30,73){\colorbox{gray}}
\put(35,73){\colorbox{gray}}
\put(40,73){\colorbox{gray}}
\put(45,73){\colorbox{gray}}
\put(50,73){\colorbox{gray}}
\put(54,73){\colorbox{gray}}
\put(30,77){\colorbox{gray}}
\put(35,77){\colorbox{gray}}
\put(40,77){\colorbox{gray}}
\put(45,77){\colorbox{gray}}
\put(50,77){\colorbox{gray}}
\put(54,77){\colorbox{gray}}

\put(60,63){\colorbox{gray}}
\put(65,63){\colorbox{gray}}
\put(70,63){\colorbox{gray}}
\put(75,63){\colorbox{gray}}
\put(80,63){\colorbox{gray}}
\put(84,63){\colorbox{gray}}
\put(60,68){\colorbox{gray}}
\put(65,68){\colorbox{gray}}
\put(70,68){\colorbox{gray}}
\put(75,68){\colorbox{gray}}
\put(80,68){\colorbox{gray}}
\put(84,68){\colorbox{gray}}
\put(60,73){\colorbox{gray}}
\put(65,73){\colorbox{gray}}
\put(70,73){\colorbox{gray}}
\put(75,73){\colorbox{gray}}
\put(80,73){\colorbox{gray}}
\put(84,73){\colorbox{gray}}
\put(60,77){\colorbox{gray}}
\put(65,77){\colorbox{gray}}
\put(70,77){\colorbox{gray}}
\put(75,77){\colorbox{gray}}
\put(80,77){\colorbox{gray}}
\put(84,77){\colorbox{gray}}

\put(120,63){\colorbox{gray}}
\put(125,63){\colorbox{gray}}
\put(130,63){\colorbox{gray}}
\put(135,63){\colorbox{gray}}
\put(140,63){\colorbox{gray}}
\put(144,63){\colorbox{gray}}
\put(120,68){\colorbox{gray}}
\put(125,68){\colorbox{gray}}
\put(130,68){\colorbox{gray}}
\put(135,68){\colorbox{gray}}
\put(140,68){\colorbox{gray}}
\put(144,68){\colorbox{gray}}
\put(120,73){\colorbox{gray}}
\put(125,73){\colorbox{gray}}
\put(130,73){\colorbox{gray}}
\put(135,73){\colorbox{gray}}
\put(140,73){\colorbox{gray}}
\put(144,73){\colorbox{gray}}
\put(120,77){\colorbox{gray}}
\put(125,77){\colorbox{gray}}
\put(130,77){\colorbox{gray}}
\put(135,77){\colorbox{gray}}
\put(140,77){\colorbox{gray}}
\put(144,77){\colorbox{gray}}


\put(30,43){\colorbox{gray}}
\put(35,43){\colorbox{gray}}
\put(40,43){\colorbox{gray}}
\put(45,43){\colorbox{gray}}
\put(50,43){\colorbox{gray}}
\put(54,43){\colorbox{gray}}
\put(30,48){\colorbox{gray}}
\put(35,48){\colorbox{gray}}
\put(40,48){\colorbox{gray}}
\put(45,48){\colorbox{gray}}
\put(50,48){\colorbox{gray}}
\put(54,48){\colorbox{gray}}
\put(30,53){\colorbox{gray}}
\put(35,53){\colorbox{gray}}
\put(40,53){\colorbox{gray}}
\put(45,53){\colorbox{gray}}
\put(50,53){\colorbox{gray}}
\put(54,53){\colorbox{gray}}
\put(30,57){\colorbox{gray}}
\put(35,57){\colorbox{gray}}
\put(40,57){\colorbox{gray}}
\put(45,57){\colorbox{gray}}
\put(50,57){\colorbox{gray}}
\put(54,57){\colorbox{gray}}

\put(60,43){\colorbox{gray}}
\put(65,43){\colorbox{gray}}
\put(70,43){\colorbox{gray}}
\put(75,43){\colorbox{gray}}
\put(80,43){\colorbox{gray}}
\put(84,43){\colorbox{gray}}
\put(60,48){\colorbox{gray}}
\put(65,48){\colorbox{gray}}
\put(70,48){\colorbox{gray}}
\put(75,48){\colorbox{gray}}
\put(80,48){\colorbox{gray}}
\put(84,48){\colorbox{gray}}
\put(60,53){\colorbox{gray}}
\put(65,53){\colorbox{gray}}
\put(70,53){\colorbox{gray}}
\put(75,53){\colorbox{gray}}
\put(80,53){\colorbox{gray}}
\put(84,53){\colorbox{gray}}
\put(60,57){\colorbox{gray}}
\put(65,57){\colorbox{gray}}
\put(70,57){\colorbox{gray}}
\put(75,57){\colorbox{gray}}
\put(80,57){\colorbox{gray}}
\put(84,57){\colorbox{gray}}

\put(90,43){\colorbox{gray}}
\put(95,43){\colorbox{gray}}
\put(100,43){\colorbox{gray}}
\put(105,43){\colorbox{gray}}
\put(110,43){\colorbox{gray}}
\put(114,43){\colorbox{gray}}
\put(90,48){\colorbox{gray}}
\put(95,48){\colorbox{gray}}
\put(100,48){\colorbox{gray}}
\put(105,48){\colorbox{gray}}
\put(110,48){\colorbox{gray}}
\put(114,48){\colorbox{gray}}
\put(90,53){\colorbox{gray}}
\put(95,53){\colorbox{gray}}
\put(100,53){\colorbox{gray}}
\put(105,53){\colorbox{gray}}
\put(110,53){\colorbox{gray}}
\put(114,53){\colorbox{gray}}
\put(90,57){\colorbox{gray}}
\put(95,57){\colorbox{gray}}
\put(100,57){\colorbox{gray}}
\put(105,57){\colorbox{gray}}
\put(110,57){\colorbox{gray}}
\put(114,57){\colorbox{gray}}

\put(120,43){\colorbox{gray}}
\put(125,43){\colorbox{gray}}
\put(130,43){\colorbox{gray}}
\put(135,43){\colorbox{gray}}
\put(140,43){\colorbox{gray}}
\put(144,43){\colorbox{gray}}
\put(120,48){\colorbox{gray}}
\put(125,48){\colorbox{gray}}
\put(130,48){\colorbox{gray}}
\put(135,48){\colorbox{gray}}
\put(140,48){\colorbox{gray}}
\put(144,48){\colorbox{gray}}
\put(120,53){\colorbox{gray}}
\put(125,53){\colorbox{gray}}
\put(130,53){\colorbox{gray}}
\put(135,53){\colorbox{gray}}
\put(140,53){\colorbox{gray}}
\put(144,53){\colorbox{gray}}
\put(120,57){\colorbox{gray}}
\put(125,57){\colorbox{gray}}
\put(130,57){\colorbox{gray}}
\put(135,57){\colorbox{gray}}
\put(140,57){\colorbox{gray}}
\put(144,57){\colorbox{gray}}

\put(150,43){\colorbox{gray}}
\put(155,43){\colorbox{gray}}
\put(160,43){\colorbox{gray}}
\put(165,43){\colorbox{gray}}
\put(170,43){\colorbox{gray}}
\put(174,43){\colorbox{gray}}
\put(150,48){\colorbox{gray}}
\put(155,48){\colorbox{gray}}
\put(160,48){\colorbox{gray}}
\put(165,48){\colorbox{gray}}
\put(170,48){\colorbox{gray}}
\put(174,48){\colorbox{gray}}
\put(150,53){\colorbox{gray}}
\put(155,53){\colorbox{gray}}
\put(160,53){\colorbox{gray}}
\put(165,53){\colorbox{gray}}
\put(170,53){\colorbox{gray}}
\put(174,53){\colorbox{gray}}
\put(150,57){\colorbox{gray}}
\put(155,57){\colorbox{gray}}
\put(160,57){\colorbox{gray}}
\put(165,57){\colorbox{gray}}
\put(170,57){\colorbox{gray}}
\put(174,57){\colorbox{gray}}

%

\put(60,23){\colorbox{gray}}
\put(65,23){\colorbox{gray}}
\put(70,23){\colorbox{gray}}
\put(75,23){\colorbox{gray}}
\put(80,23){\colorbox{gray}}
\put(84,23){\colorbox{gray}}
\put(60,28){\colorbox{gray}}
\put(65,28){\colorbox{gray}}
\put(70,28){\colorbox{gray}}
\put(75,28){\colorbox{gray}}
\put(80,28){\colorbox{gray}}
\put(84,28){\colorbox{gray}}
\put(60,33){\colorbox{gray}}
\put(65,33){\colorbox{gray}}
\put(70,33){\colorbox{gray}}
\put(75,33){\colorbox{gray}}
\put(80,33){\colorbox{gray}}
\put(84,33){\colorbox{gray}}
\put(60,37){\colorbox{gray}}
\put(65,37){\colorbox{gray}}
\put(70,37){\colorbox{gray}}
\put(75,37){\colorbox{gray}}
\put(80,37){\colorbox{gray}}
\put(84,37){\colorbox{gray}}

\put(90,23){\colorbox{gray}}
\put(95,23){\colorbox{gray}}
\put(100,23){\colorbox{gray}}
\put(105,23){\colorbox{gray}}
\put(110,23){\colorbox{gray}}
\put(114,23){\colorbox{gray}}
\put(90,28){\colorbox{gray}}
\put(95,28){\colorbox{gray}}
\put(100,28){\colorbox{gray}}
\put(105,28){\colorbox{gray}}
\put(110,28){\colorbox{gray}}
\put(114,28){\colorbox{gray}}
\put(90,33){\colorbox{gray}}
\put(95,33){\colorbox{gray}}
\put(100,33){\colorbox{gray}}
\put(105,33){\colorbox{gray}}
\put(110,33){\colorbox{gray}}
\put(114,33){\colorbox{gray}}
\put(90,37){\colorbox{gray}}
\put(95,37){\colorbox{gray}}
\put(100,37){\colorbox{gray}}
\put(105,37){\colorbox{gray}}
\put(110,37){\colorbox{gray}}
\put(114,37){\colorbox{gray}}

\put(120,23){\colorbox{gray}}
\put(125,23){\colorbox{gray}}
\put(130,23){\colorbox{gray}}
\put(135,23){\colorbox{gray}}
\put(140,23){\colorbox{gray}}
\put(144,23){\colorbox{gray}}
\put(120,28){\colorbox{gray}}
\put(125,28){\colorbox{gray}}
\put(130,28){\colorbox{gray}}
\put(135,28){\colorbox{gray}}
\put(140,28){\colorbox{gray}}
\put(144,28){\colorbox{gray}}
\put(120,33){\colorbox{gray}}
\put(125,33){\colorbox{gray}}
\put(130,33){\colorbox{gray}}
\put(135,33){\colorbox{gray}}
\put(140,33){\colorbox{gray}}
\put(144,33){\colorbox{gray}}
\put(120,37){\colorbox{gray}}
\put(125,37){\colorbox{gray}}
\put(130,37){\colorbox{gray}}
\put(135,37){\colorbox{gray}}
\put(140,37){\colorbox{gray}}
\put(144,37){\colorbox{gray}}

\put(120,3){\colorbox{gray}}
\put(125,3){\colorbox{gray}}
\put(130,3){\colorbox{gray}}
\put(135,3){\colorbox{gray}}
\put(140,3){\colorbox{gray}}
\put(144,3){\colorbox{gray}}
\put(120,8){\colorbox{gray}}
\put(125,8){\colorbox{gray}}
\put(130,8){\colorbox{gray}}
\put(135,8){\colorbox{gray}}
\put(140,8){\colorbox{gray}}
\put(144,8){\colorbox{gray}}
\put(120,13){\colorbox{gray}}
\put(125,13){\colorbox{gray}}
\put(130,13){\colorbox{gray}}
\put(135,13){\colorbox{gray}}
\put(140,13){\colorbox{gray}}
\put(144,13){\colorbox{gray}}
\put(120,17){\colorbox{gray}}
\put(125,17){\colorbox{gray}}
\put(130,17){\colorbox{gray}}
\put(135,17){\colorbox{gray}}
\put(140,17){\colorbox{gray}}
\put(144,17){\colorbox{gray}}

\put(0,00){\framebox(30,20)}
\put(0,20){\framebox(30,20)} 
\put(0,40){\framebox(30,20)}
\put(0,60){\framebox(30,20){\tiny $(1,1)$}}

\put(30,00){\framebox(30,20)}
\put(30,20){\framebox(30,20)}
\put(30,40){\framebox(30,20){\tiny $(2,2)$}}
\put(30,60){\framebox(30,20){\tiny $(1,2)$}} 

\put(60,00){\framebox(30,20)} 
\put(60,20){\framebox(30,20){\tiny $(3,3)$}} 
\put(60,40){\framebox(30,20){\tiny $(2,3)$}} 
\put(60,60){\framebox(30,20){\tiny $(1,3)$}}

\put(90,00){\framebox(30,20)} 
\put(90,20){\framebox(30,20){\tiny $(3,4)$}} 
\put(90,40){\framebox(30,20){\tiny $(2,4)$}} 
\put(90,60){\framebox(30,20){\tiny $(1,4)$}}

\put(120,00){\framebox(30,20){\tiny $(4,4)$}} 
\put(120,20){\framebox(30,20){\tiny $(4,5)$}} 
\put(120,40){\framebox(30,20){\tiny $(4,6)$}} 
\put(120,60){\framebox(30,20){\tiny $(4,7)$}}

\put(150,00){\framebox(30,20)} 
\put(150,20){\framebox(30,20)} 
\put(150,40){\framebox(30,20){\tiny $(2,5)$}} 
\put(150,60){\framebox(30,20){\tiny $(1,5)$}}

\put(180,0){\framebox(30,20)} 
\put(180,20){\framebox(30,20)} 
\put(180,40){\framebox(30,20)} 
\put(180,60){\framebox(30,20){\tiny $(1,6)$}}
\end{picture}
\end{center}
\caption{The configuration corresponding to $h=(3,5,4,7)$.}
\label{picture:typeDHessenbergFunction}
\end{figure} 
\end{example}
We denote positive roots by
\begin{equation*} 
\alpha_{i,j}=
\begin{cases}
x_i-x_j  \ \ \ \ \ &{\rm if} \ 1 \leq i \leq n-1 \ {\rm and} \ i+1 \leq j \leq n \\ 
x_i+x_{2n-j}  \ \ \ \ \ &{\rm if} \ 1 \leq i \leq n-1 \ {\rm and} \ n+1 \leq j \leq 2n-i-1 
\end{cases}
\end{equation*}
and 
\begin{equation*} 
\alpha_{n,j}=x_{2n-j}+x_n  \ \ \ \ \ {\rm if} \ n+1 \leq j \leq 2n-1. 
\end{equation*}
For $i \in [n-1]$, define a subset $\Phi^+_i=\{\alpha_{i,j} \mid i < j \leq 2n-i-1 \}$ and $\Phi^+_n=\{\alpha_{n,j} \mid n < j \leq 2n-1 \}$ of positive roots. 
Then, the set of lower ideals $I \subset \Phi^+_{D_n}=\coprod_{i=1}^{n} \ \Phi^+_i$ is in one-to-one correspondence with the set of Hessenberg functions $h$ for type $D_n$. 

\begin{example}
The lower ideal associated with a Hessenberg function $h=(3,5,4,7)$ can be pictorially shown in Figure~\ref{picture:TypeDLowerIdeal}.
\begin{figure}[h]
\begin{center}
\begin{picture}(150,40)
\put(0,33){\colorbox{gray}}
\put(0,37){\colorbox{gray}}
\put(5,33){\colorbox{gray}}
\put(5,37){\colorbox{gray}}
\put(10,33){\colorbox{gray}}
\put(10,37){\colorbox{gray}}
\put(15,33){\colorbox{gray}}
\put(15,37){\colorbox{gray}}
\put(20,33){\colorbox{gray}}
\put(20,37){\colorbox{gray}}
\put(24,33){\colorbox{gray}}
\put(24,37){\colorbox{gray}}

\put(30,33){\colorbox{gray}}
\put(30,37){\colorbox{gray}}
\put(35,33){\colorbox{gray}}
\put(35,37){\colorbox{gray}}
\put(40,33){\colorbox{gray}}
\put(40,37){\colorbox{gray}}
\put(45,33){\colorbox{gray}}
\put(45,37){\colorbox{gray}}
\put(50,33){\colorbox{gray}}
\put(50,37){\colorbox{gray}}
\put(54,33){\colorbox{gray}}
\put(54,37){\colorbox{gray}}

\put(60,33){\colorbox{gray}}
\put(60,37){\colorbox{gray}}
\put(65,33){\colorbox{gray}}
\put(65,37){\colorbox{gray}}
\put(70,33){\colorbox{gray}}
\put(70,37){\colorbox{gray}}
\put(75,33){\colorbox{gray}}
\put(75,37){\colorbox{gray}}
\put(80,33){\colorbox{gray}}
\put(80,37){\colorbox{gray}}
\put(84,33){\colorbox{gray}}
\put(84,37){\colorbox{gray}}


\put(120,33){\colorbox{gray}}
\put(120,37){\colorbox{gray}}
\put(125,33){\colorbox{gray}}
\put(125,37){\colorbox{gray}}
\put(130,33){\colorbox{gray}}
\put(130,37){\colorbox{gray}}
\put(135,33){\colorbox{gray}}
\put(135,37){\colorbox{gray}}
\put(140,33){\colorbox{gray}}
\put(140,37){\colorbox{gray}}
\put(144,33){\colorbox{gray}}
\put(144,37){\colorbox{gray}}


\put(30,23){\colorbox{gray}}
\put(30,27){\colorbox{gray}}
\put(35,23){\colorbox{gray}}
\put(35,27){\colorbox{gray}}
\put(40,23){\colorbox{gray}}
\put(40,27){\colorbox{gray}}
\put(45,23){\colorbox{gray}}
\put(45,27){\colorbox{gray}}
\put(50,23){\colorbox{gray}}
\put(50,27){\colorbox{gray}}
\put(54,23){\colorbox{gray}}
\put(54,27){\colorbox{gray}}

\put(60,23){\colorbox{gray}}
\put(60,27){\colorbox{gray}}
\put(65,23){\colorbox{gray}}
\put(65,27){\colorbox{gray}}
\put(70,23){\colorbox{gray}}
\put(70,27){\colorbox{gray}}
\put(75,23){\colorbox{gray}}
\put(75,27){\colorbox{gray}}
\put(80,23){\colorbox{gray}}
\put(80,27){\colorbox{gray}}
\put(84,23){\colorbox{gray}}
\put(84,27){\colorbox{gray}}

\put(90,23){\colorbox{gray}}
\put(90,27){\colorbox{gray}}
\put(95,23){\colorbox{gray}}
\put(95,27){\colorbox{gray}}
\put(100,23){\colorbox{gray}}
\put(100,27){\colorbox{gray}}
\put(105,23){\colorbox{gray}}
\put(105,27){\colorbox{gray}}
\put(110,23){\colorbox{gray}}
\put(110,27){\colorbox{gray}}
\put(114,23){\colorbox{gray}}
\put(114,27){\colorbox{gray}}

\put(120,23){\colorbox{gray}}
\put(120,27){\colorbox{gray}}
\put(125,23){\colorbox{gray}}
\put(125,27){\colorbox{gray}}
\put(130,23){\colorbox{gray}}
\put(130,27){\colorbox{gray}}
\put(135,23){\colorbox{gray}}
\put(135,27){\colorbox{gray}}
\put(140,23){\colorbox{gray}}
\put(140,27){\colorbox{gray}}
\put(144,23){\colorbox{gray}}
\put(144,27){\colorbox{gray}}

\put(150,23){\colorbox{gray}}
\put(150,27){\colorbox{gray}}
\put(155,23){\colorbox{gray}}
\put(155,27){\colorbox{gray}}
\put(160,23){\colorbox{gray}}
\put(160,27){\colorbox{gray}}
\put(165,23){\colorbox{gray}}
\put(165,27){\colorbox{gray}}
\put(170,23){\colorbox{gray}}
\put(170,27){\colorbox{gray}}
\put(174,23){\colorbox{gray}}
\put(174,27){\colorbox{gray}}

%

\put(60,13){\colorbox{gray}}
\put(60,17){\colorbox{gray}}
\put(65,13){\colorbox{gray}}
\put(65,17){\colorbox{gray}}
\put(70,13){\colorbox{gray}}
\put(70,17){\colorbox{gray}}
\put(75,13){\colorbox{gray}}
\put(75,17){\colorbox{gray}}
\put(80,13){\colorbox{gray}}
\put(80,17){\colorbox{gray}}
\put(84,13){\colorbox{gray}}
\put(84,17){\colorbox{gray}}

\put(90,13){\colorbox{gray}}
\put(90,17){\colorbox{gray}}
\put(95,13){\colorbox{gray}}
\put(95,17){\colorbox{gray}}
\put(100,13){\colorbox{gray}}
\put(100,17){\colorbox{gray}}
\put(105,13){\colorbox{gray}}
\put(105,17){\colorbox{gray}}
\put(110,13){\colorbox{gray}}
\put(110,17){\colorbox{gray}}
\put(114,13){\colorbox{gray}}
\put(114,17){\colorbox{gray}}

\put(120,13){\colorbox{gray}}
\put(120,17){\colorbox{gray}}
\put(125,13){\colorbox{gray}}
\put(125,17){\colorbox{gray}}
\put(130,13){\colorbox{gray}}
\put(130,17){\colorbox{gray}}
\put(135,13){\colorbox{gray}}
\put(135,17){\colorbox{gray}}
\put(140,13){\colorbox{gray}}
\put(140,17){\colorbox{gray}}
\put(144,13){\colorbox{gray}}
\put(144,17){\colorbox{gray}}

\put(120,3){\colorbox{gray}}
\put(120,7){\colorbox{gray}}
\put(125,3){\colorbox{gray}}
\put(125,7){\colorbox{gray}}
\put(130,3){\colorbox{gray}}
\put(130,7){\colorbox{gray}}
\put(135,3){\colorbox{gray}}
\put(135,7){\colorbox{gray}}
\put(140,3){\colorbox{gray}}
\put(140,7){\colorbox{gray}}
\put(144,3){\colorbox{gray}}
\put(144,7){\colorbox{gray}}

\put(0,30){\framebox(30,10)} 
\put(30,30){\framebox(30,10){\tiny $x_1-x_2$}}
\put(60,30){\framebox(30,10){\tiny $x_1-x_3$}}
\put(90,30){\framebox(30,10){\tiny $x_1-x_4$}}
\put(120,30){\framebox(30,10){\tiny $x_1+x_4$}}
\put(150,30){\framebox(30,10){\tiny $x_1+x_3$}}
\put(180,30){\framebox(30,10){\tiny $x_1+x_2$}}

\put(0,20){\framebox(30,10)} 
\put(30,20){\framebox(30,10)}
\put(60,20){\framebox(30,10){\tiny $x_2-x_3$}}
\put(90,20){\framebox(30,10){\tiny $x_2-x_4$}}
\put(120,20){\framebox(30,10){\tiny $x_2+x_4$}}
\put(150,20){\framebox(30,10){\tiny $x_2+x_3$}}
\put(180,20){\framebox(30,10)}

\put(0,10){\framebox(30,10)} 
\put(30,10){\framebox(30,10)}
\put(60,10){\framebox(30,10)}
\put(90,10){\framebox(30,10){\tiny $x_3-x_4$}}
\put(120,10){\framebox(30,10){\tiny $x_3+x_4$}}
\put(150,10){\framebox(30,10)}
\put(180,10){\framebox(30,10)}

\put(0,0){\framebox(30,10)} 
\put(30,0){\framebox(30,10)}
\put(60,0){\framebox(30,10)}
\put(90,0){\framebox(30,10)}
\put(120,0){\framebox(30,10)}
\put(150,0){\framebox(30,10)}
\put(180,0){\framebox(30,10)}
\end{picture}
\end{center}
\vspace{-10pt}
\caption{The lower ideal associated with $h=(3,5,4,7)$.}
\label{picture:TypeDLowerIdeal}
\end{figure} 
\end{example}

We now explain the polynomials $f^{D_n}_{i,j}$ given in \cite{EHNT1} which are used to describe the cohomology ring $H^*(\Hess(N,h))$.
For $1 \leq i \leq n-1$ the polynomial $f^{D_n}_{i,j}$ is defined by 
\begin{align}
f^{D_n}_{i,j}=&\sum_{k=1}^{i} (x_k-x_{i+1})\cdots(x_k-x_j) x_k \ \ \ \ \ \ {\rm if} \ i \leq j \leq n-2, \label{eq:fD1} \\
f^{D_n}_{i,n-1}=&\sum_{k=1}^{i} \left((x_k-x_{i+1})\cdots(x_k-x_{n-1})(x_k+x_n) \right) +(-1)^{n-i} n \, x_{i+1} \cdots x_n, \label{eq:fD2} \\
f^{D_n}_{i,n+j}=&\sum_{k=1}^{i} \big((x_k-x_{i+1})\cdots(x_k-x_{n})(x_k+x_n)\cdots(x_k+x_{n-j}) \big) \label{eq:fD3} \\
&+(-1)^{n-i+1} n \, x_{i+1}\cdots x_{n-1-j} x_{n-j}^2\cdots x_n^2  \ \ \ \ \ \ {\rm if} \ 0 \leq j \leq n-1-i. \notag 
\end{align}
We also define the polynomial $f^{D_n}_{n,2n-1-r} \ (0 \leq r \leq n-1)$ as
\begin{align}
f^{D_n}_{n,2n-1-r}=&\sum_{k=1}^{r} \left((-1)^{n-r+1} (x_k-x_{r+1})\cdots(x_k-x_{n-1})(x_k-x_n) \right)  + n \, x_{r+1}\cdots x_n. \label{eq:fD4}
\end{align}

\begin{theorem} $($\cite[Corollary~7.4]{EHNT1}$)$ \label{theorem:cohomologyD}
Let $h$ be a Hessenberg function for type $D_n$ and $\Hess(N,h)$ the associated regular nilpotent Hessenberg variety.
Then there is an isomorphism of graded $\R$-algebras
\begin{equation*}
H^*(\Hess(N,h)) \cong \R[x_1,\ldots,x_n]/(f^{D_n}_{1,h(1)}, \ldots f^{D_n}_{n,h(n)})
\end{equation*}
which sends each root $\alpha$ to the Euler class $e(L_{\alpha})|_{\Hess(N,h)}$.
\end{theorem}

Our basis of $H^*(\Hess(N,h))$ for types $A,B,C,G$ consists of monomials of positive roots (Theorem~\ref{theorem_intro}). 
As explained in the end of Section~\ref{section:Main theorem}, an idea to construct our basis is based on the properties (I) and (II) in Section~\ref{section:Main theorem}.
The property~(I) holds for type $D$, but the property~(II) is more complicated in type $D$.  
For this reason, we construct an additive basis for $H^*(\Hess(N,h))$ in type $D$ which differ from monomials of positive roots.
We first give an example below why the arguments of Section~\ref{section:Main theorem} can not be replicated. It may be helpful for the reader to understand why a basis in type $D$ is more complicated.

\begin{example} \label{example:D3}
Consider the case $n=3$ and take $h_{\max}=(4,3,5)$, which is the maximal Hessenberg function. Namely, $\Hess(N,h_{\max})$ is the flag variety in type $D_3$.
By Theorem~\ref{theorem:cohomologyD}, we can describe
\begin{equation*}
H^*(\Hess(N,h_{\max})) \cong \R[x_1,x_2,x_3]/(f_{1,4}, f_{2,3}, f_{3,5})
\end{equation*}
where 
\begin{equation} \label{eq:fij_flag_D3}
\begin{split} 
f_{1,4}&=f^{D_3}_{1,4}=(x_1-x_2)(x_1-x_3)(x_1+x_3)(x_1+x_2)-3x_2^2x_3^2, \\
f_{2,3}&=f^{D_3}_{2,3}=(x_1-x_3)(x_1+x_3)+(x_2-x_3)(x_2+x_3)+3x_3^2, \\ 
f_{3,5}&=f^{D_3}_{3,5}=3x_1x_2x_3.
\end{split}
\end{equation}
The positive roots in type $D_3$ are shown in Figure~\ref{picture:TypeD3Root}.
\begin{figure}[h]
\begin{center}
\begin{picture}(320,60)
\put(00,30){\framebox(80,30){$\alpha_{1,2}=x_1-x_2$}}
\put(80,30){\framebox(80,30){$\alpha_{1,3}=x_1-x_3$}}
\put(160,30){\framebox(80,30){$\alpha_{3,5}=x_1+x_3$}}
\put(240,30){\framebox(80,30){$\alpha_{1,4}=x_1+x_2$}}
\put(80,0){\framebox(80,30){$\alpha_{2,3}=x_2-x_3$}}
\put(160,0){\framebox(80,30){$\alpha_{3,4}=x_2+x_3$}}
\end{picture}
\end{center}
\vspace{-10pt}
\caption{The positive roots in type $D_3$.}
\label{picture:TypeD3Root}
\end{figure} 

We can naturally ask whether the following set  
\begin{equation*} 
\left\{\prod_{i=1}^{3} \alpha_{i,h_{\max}(i)} \cdot \alpha_{i,h_{\max}(i)-1} \cdots \alpha_{i,h_{\max}(i)-\m_i+1} \ \middle| \ 0 \leq \m_i \leq h_{\max}(i)-i \right\}
\end{equation*}
forms a basis for the cohomology $H^*(\Hess(N,h_{\max}))$ over $\R$, as in type $A, B, C, G$.
(For simplicity, we take each permutation $w^{(i)}$ as the identity in Theorem~\ref{theorem_intro}.)
As in the proof of Proposition~\ref{proposition:basisAFlag}, for $\mathbf{d}_1:=(d_{11}, p_1) \in \R^{2}$ and $\mathbf{d}_2=(d_{21}, d_{22}, p_2) \in \R^{3}$, we consider the polynomials 
\begin{align*}
f^{\mathbf{d}_1}_{1,4}&:=d_{11} (x_1-x_2)(x_1-x_3)(x_1+x_3)(x_1+x_2) - p_1 \, x_2^2x_3^2; \\
f^{\mathbf{d}_2}_{2,3}&:=d_{21}(x_1-x_3)(x_1+x_3)+d_{22}(x_2-x_3)(x_2+x_3) + p_2 \, x_{3}^2, 
\end{align*}
and define the generalized ring for $\mathbf{D}_{(3)}=(\mathbf{d}_i \in \R^{i+1} \mid i=1, 2)$ by
\begin{align*} 
R^{\mathbf{D}_{(3)}}:=\R[x_1,x_2,x_3]/(f^{\mathbf{d}_1}_{1,4}, f^{\mathbf{d}_2}_{2,3}, x_1x_2x_3). 
\end{align*}
Note that the generator $x_1x_2x_3$ of the ideal in the quotient ring above means $f_{3,5}$ up to a non-zero scalar multiplication.
A key point of the proof of Proposition~\ref{proposition:basisAFlag} (in type $A$) is that the multiplication by a root induces the map from $R^{\mathbf{A'}_{(n-2)}}$ to $R^{\mathbf{A}_{(n-1)}}$ (see \eqref{eq:multiinproof}). 
In other words, since the generator $f^{\mathbf{a}_1}_{1,n}$ of the ideal in $R^{\mathbf{A}_{(n-1)}}$ is of the form of a product, we can apply Lemma~\ref{lemma:key} so that the multiplication $g_n''$ is a root.
However, since $f^{\mathbf{d}_1}_{1,4}$ is not of the form of a product and any factor of $x_1x_2x_3$ is not a positive root in type $D$, it is not easy to apply Lemma~\ref{lemma:key} to $R^{\mathbf{D}_{(3)}}$.

We are able to overcome this obstacle by changing a candidate of a basis for $H^*(\Hess(N,h))$. 
Consider a sequence of Hessenberg functions
$$
h_1:=(1,3,4) \subset h_2:=(2,3,4) \subset h_3:=(3,3,4) \subset \tilde{h}_3:=(3,3,5) \subset \tilde{h}_4:=(4,3,5)
$$
where $\tilde{h}_4$ is the maximal Hessenberg function for type $D_3$ and we may regard $h_1$ as the maximal Hessenberg function for type $D_2$. 
Then, Lemma~\ref{lemma:Gysinmap} leads us to the following composition of the Gysin maps
\begin{align*} 
H^*(\Hess(N,h_1)) \hookrightarrow H^*(\Hess(N,h_2)) \hookrightarrow 
H^*(\Hess(N,h_3)) \hookrightarrow H^*(\Hess(N,\tilde{h}_3)) \hookrightarrow 
H^*(\Hess(N,\tilde{h}_4)). 
\end{align*}
To describe the cohomology rings above, we need \eqref{eq:fij_flag_D3} and the following polynomials
\begin{align*}
&f_{1,1}=x_1, \ f_{1,2}=(x_1-x_2)(x_1+x_3) + 3x_2x_3, \ f_{1,3}=(x_1-x_2)(x_1-x_3)(x_1+x_3)-3x_2x_3^2, \\ 
&f_{3,4}=-(x_1-x_2)(x_1-x_3)+3x_2x_3
\end{align*}
from Theorem~\ref{theorem:cohomologyD}.
As usual, for $\mathbf{d}_1:=(d_{11}, p_1) \in \R^{2}$, we consider 
\begin{align*}
&f^{\mathbf{d}_1}_{1,1}=d_{11}x_1, \ f^{\mathbf{d}_1}_{1,2}=d_{11}(x_1-x_2)(x_1+x_3) + p_1 \, x_2x_3, \\
&f^{\mathbf{d}_1}_{1,3}=d_{11}(x_1-x_2)(x_1-x_3)(x_1+x_3)-p_1 \, x_2x_3^2, \ f^{\mathbf{d}_1}_{3,4}=d_{11}(x_1-x_2)(x_1-x_3)-p_1 \, x_2x_3 
\end{align*}
and define 
\begin{align*} 
R^{\mathbf{D}_{(3)}}_{h_j}:=&\R[x_1,x_2,x_3]/(f^{\mathbf{d}_1}_{1,j}, f^{\mathbf{d}_i}_{2,3}, f^{\mathbf{d}_1}_{3,4}) \ \ \ {\rm for} \ j=1,2,3, \\
R^{\mathbf{D}_{(3)}}_{\tilde{h}_j}:=&\R[x_1,x_2,x_3]/(f^{\mathbf{d}_1}_{1,j}, f^{\mathbf{d}_i}_{2,3}, x_1x_2x_3) \ \ \ {\rm for} \ j=3,4
\end{align*}
for $\mathbf{D}_{(3)}=(\mathbf{d}_i \in \R^{i+1} \mid i =1,2)$ where we may assume that these are Artinian rings.
Note that $R^{\mathbf{D}_{(3)}}_{\tilde{h}_{4}}=R^{\mathbf{D}_{(3)}}$ and we may know inductively an additive basis for $R^{\mathbf{D}_{(3)}}_{h_1}$ since $h_1$ can be regarded as the maximal Hessenberg function for type $D_2$. 
Recall that our aim is to construct an additive basis of $R^{\mathbf{D}_{(3)}}_{\tilde{h}_{4}}$. 
Since $f^{\mathbf{d}_1}_{1,4} = (x_1+x_2) f^{\mathbf{d}_1}_{1,3} + p_1 \, x_3 \cdot (x_1 x_2 x_3)$, one can write 
\begin{align*}
R^{\mathbf{D}_{(3)}}_{\tilde{h}_4} = \R[x_1,x_2,x_3]/((x_1+x_2) f^{\mathbf{d}_1}_{1,3}, f^{\mathbf{d}_i}_{2,3}, x_1x_2x_3).
\end{align*}
It then follows from Lemma~\ref{lemma:key} that 
\begin{align*} 
0 \rightarrow R^{\mathbf{D}_{(3)}}_{\tilde{h}_{3}} \xrightarrow{\times (x_1+x_2)} R^{\mathbf{D}_{(3)}}_{\tilde{h}_4} \rightarrow R^{\mathbf{D}_{(3)}}_{\tilde{h}_4}/(x_1+x_2) \rightarrow 0
\end{align*}
is exact.
If we know an additive basis of the quotient ring $R^{\mathbf{D}_{(3)}}_{\tilde{h}_4}/(x_1+x_2)$, then our aim reduces to construct an additive basis for $R^{\mathbf{D}_{(3)}}_{\tilde{h}_{3}}$ by the exact sequence above. 
Next, since $f^{\mathbf{d}_1}_{1,3} = (x_1+x_3) f^{\mathbf{d}_1}_{3,4} + p_1 \, x_1 x_2 x_3$, we obtain the following exact sequence
\begin{align*} 
0 \rightarrow R^{\mathbf{D}_{(3)}}_{h_{3}} \xrightarrow{\times (x_1+x_3)} R^{\mathbf{D}_{(3)}}_{\tilde{h}_3} \rightarrow R^{\mathbf{D}_{(3)}}_{\tilde{h}_3}/(x_1+x_3) \rightarrow 0.
\end{align*}
Thus, we aim to know an additive basis for $R^{\mathbf{D}_{(3)}}_{h_{3}}$ if we have an additive basis of the quotient ring $R^{\mathbf{D}_{(3)}}_{\tilde{h}_3}/(x_1+x_3)$.
Finally, using the following formulas
\begin{align*}
2f^{\mathbf{d}_1}_{1,3} = (x_1-x_3) f^{\mathbf{d}_1}_{1,2} + (x_1+x_3)f^{\mathbf{d}_1}_{3,4} \ \textrm{and} \ f^{\mathbf{d}_1}_{1,2} = 2(x_1-x_2) f^{\mathbf{d}_1}_{1,1} - f^{\mathbf{d}_1}_{3,4}, 
\end{align*}
we obtain the exact sequence
\begin{equation*} 
0 \rightarrow R^{\mathbf{D}_{(3)}}_{h_{j-1}} \xrightarrow{\times (x_1-x_j)} R^{\mathbf{D}_{(3)}}_{h_j} \rightarrow R^{\mathbf{D}_{(3)}}_{h_j}/(x_1-x_j) \rightarrow 0
\end{equation*}
for $j =2,3$. 
Therefore, if we know an additive basis of the quotient rings $R^{\mathbf{D}_{(3)}}_{h_j}/(x_1-x_j)$, then our goal is to construct an additive basis for $R^{\mathbf{D}_{(3)}}_{h_1}$. 
However, recalling that $h_1$ can be naturally regarded as the maximal Hessenberg function for type $D_2$, an additive basis for $R^{\mathbf{D}_{(3)}}_{h_1}$ may be inductively known. 
Thus, we can expect an inductive construction of an additive basis for $R^{\mathbf{D}_{(3)}}$ although it remains to construct an additive basis for the quotient rings $R^{\mathbf{D}_{(3)}}_{\tilde{h}_4}/(x_1+x_2), R^{\mathbf{D}_{(3)}}_{\tilde{h}_3}/(x_1+x_3)$, and $R^{\mathbf{D}_{(3)}}_{h_j}/(x_1-x_j)$. 
Since these quotient rings have a good property as in Lemma~\ref{lemma:setsudouringA} but the property is complicated, a candidate of a basis for $H^*(\Hess(N,h))$ should be changed in type $D$. 
The basis which we will construct is obtained by trial and error, and a benefit of our basis is to give the linear independence of the set of Poincar\'e duals of smaller regular nilpotent Hessenberg varieties $\Hess(N,h')$, as described below.
\end{example}

In order to construct an additive basis of the cohomology $H^*(\Hess(N,h))$ for type $D_n$, we need some preliminaries. 
For $n \geq 2$, define a set 
\begin{align*}
L_n:=\{(\ell_1,\ldots,\ell_n) \in \Z^n \mid i \leq \ell_i \leq 2n-1-i \ {\rm for} \ i\in [n-1] \ {\rm and} \ n \leq \ell_n \leq 2n-1 \}.
\end{align*}
We construct $\ell' \in L_{n-1}$ from $\ell \in L_n$ as follows. \\
\textbf{Case (i):} Suppose that $\ell_n < 2n-1$. Then we define $\ell' \in L_{n-1}$ by
\begin{align}
& \ell':=(\ell_2-1,\ldots,\ell_{n-1}-1,\ell_n-1) \ \ \ {\rm if} \ \ell_1 \leq n; \label{eq:proc1r} \\
& \ell':=(\ell_2-1,\ldots,\ell_{n-1}-1,\ell_n-1) \ \ \ {\rm if} \ \ell_1 \geq n+1. \label{eq:proc3r}
\end{align}
Remark that the results of $\ell'$ in both cases are same, but it is convenient to distinguish the two cases for the purpose of definition for $\alpha_{i,j}^{(\ell)}$, as described below. \\ \ 
\textbf{Case (ii):} Suppose that $\ell_n = 2n-1$. Then
\begin{align}
& \ell':=(\ell_2-1,\ldots,\ell_{n-1}-1,\ell_1+n-2) \ \ \ {\rm if} \ \ell_1 \leq n-1; \label{eq:proc2r} \\
& \textrm{$\ell'$ is undefined} \ \ \ {\rm if} \ \ell_1 \geq n. \label{eq:proc2} 
\end{align}
If $\ell'$ is obtained from $\ell$ by a procedure $(6.*)$ ($*=5,6,7$), then we write $\ell \xrightarrow{(6.*)} \ell'$ or simply $\ell \rightarrow\ell'$.

Let $h$ be a Hessenberg function for type $D_n$.
For $m=(m_1,\ldots,m_n)$ with $0 \leq m_i \leq h(i)-i$ for all $i \in [n]$, we define 
\begin{equation} \label{eq:vmhD}
v_m^{(h)}:=\prod_{i=1}^{n} \alpha_{i,h(i)}^{(h-m)} \cdot \alpha_{i,h(i)-1}^{(h-m)} \cdots \alpha_{i,h(i)-m_i+1}^{(h-m)}
\end{equation}
with the convention $\alpha_{i,h(i)}^{(h-m)} \cdot \alpha_{i,h(i)-1}^{(h-m)} \cdots \alpha_{i,h(i)-m_i+1}^{(h-m)}=1$ whenever $\m_i=0$.
Here, $\alpha_{i,j}^{(h-m)}$ is defined as follows.
We put $\ell:=h-m=(h(1)-m_1,\ldots,h(n)-m_n)$. Then we have $\ell \in L_n$ and consider the finitely many procedures $(6.*)$ ($*=5,6,7,8$) starting at $\ell$: 
$$
\ell=\ell^{(1)} \rightarrow \ell^{(2)} \rightarrow \cdots \rightarrow \ell^{(N)}.
$$
For $1 \leq i \leq n-1$ and $i+1 \leq j \leq 2n-1-i$ we put $k:=n-j+i$ and define 
\begin{align*}
\alpha^{(\ell)}_{i,j}:=\begin{cases}
x_k \ \ \ & {\rm if} \  i<k, \ \ell^{(i)} \xrightarrow{\eqref{eq:proc2r}} \ell^{(i+1)}, \ {\rm and} \ \ell^{(k)} \xrightarrow{\eqref{eq:proc3r}} \ell^{(k+1)}, \\
x_i-x_k \ \ \ & {\rm if} \ i<k, \ \ell^{(i)} \xrightarrow{\eqref{eq:proc2r}} \ell^{(i+1)}, \ {\rm and} \  \ell^{(k)} \xrightarrow{\eqref{eq:proc1r}} \ell^{(k+1)}, \\
\alpha_{i,j} \ \ \ &{\rm otherwise}.
\end{cases}
\end{align*}
For $1 \leq r \leq n-1$ we also define 
\begin{align*}
\alpha^{(\ell)}_{n,2n-r}:=\begin{cases}
x_r \ \ \ &{\rm if} \ \ell^{(r)} \xrightarrow{\eqref{eq:proc3r}} \ell^{(r+1)},  \\
\alpha_{n,2n-r}=x_r+x_n \ \ \ &{\rm otherwise}.
\end{cases}
\end{align*}

\begin{example}
Let us consider a Hessenberg function $h=(3,5,4,7)$ for type $D_4$. \\ \ 
(1) If we take $m=(m_1,\ldots,m_4)=(1,0,0,2)$, then we have $\ell:=h-m=(2,5,4,5)$.
The procedures starting at $\ell$ can be computed by 
$$
\ell=\ell^{(1)}=(2,5,4,5) \xrightarrow{\eqref{eq:proc1r}} \ell^{(2)}=(4,3,4) \xrightarrow{\eqref{eq:proc3r}} \ell^{(3)}=(2,3). 
$$
Hence, the product $v_m^{(h)}$ in \eqref{eq:vmhD} is equal to 
$$
v_m^{(h)}=(\alpha^{(\ell)}_{1,3})\cdot 1 \cdot 1 \cdot(\alpha^{(\ell)}_{4,7} \cdot \alpha^{(\ell)}_{4,6})=(x_1-x_3)(x_1+x_4)x_2.
$$
(2) If we take $m'=(m'_1,\ldots,m'_4)=(1,2,0,2)$, then we have $\ell':=h-m'=(2,3,4,5)$ and 
$$
\ell'=\ell'^{(1)}=(2,3,4,5) \xrightarrow{\eqref{eq:proc1r}} \ell'^{(2)}=(2,3,4) \xrightarrow{\eqref{eq:proc1r}} \ell'^{(3)}=(2,3).
$$
The product $v_{m'}^{(h)}$ in \eqref{eq:vmhD} can be expressed as
$$
v_{m'}^{(h)}=(\alpha^{(\ell')}_{1,3})\cdot(\alpha^{(\ell')}_{2,5} \cdot \alpha^{(\ell')}_{2,4})\cdot 1 \cdot(\alpha^{(\ell')}_{4,7} \cdot \alpha^{(\ell')}_{4,6})=(x_1-x_3)(x_2+x_3)(x_2-x_4)(x_1+x_4)(x_2+x_4).
$$
Note that this product is equal to the Poincar\'e dual $[\Hess(N,h')]$ for $h'=(2,3,4,5)$ by \eqref{eq:PoincaredualNilpotent_h}.
\end{example}

The following lemma is straightforward.

\begin{lemma} \label{lemma:5-18}
Let $\ell \in L_n$.
For $2 \leq i \leq n-1$ and $i+1 \leq j \leq 2n-1-i$, 
\begin{align*}
\alpha_{i,j}^{(\ell)}(x_1,\ldots,x_n)=\begin{cases}
\alpha_{i-1,j-1}^{(\ell')}(x_2,\ldots,x_n) \ \ \ {\rm if} \ \ell \rightarrow \ell' \ {\rm is \ defined}, \\
\alpha_{i-1,j-1}^{D_{n-1}}(x_2,\ldots,x_n) \ \ \ {\rm if} \ \ell' \ {\rm is \ undefined}.
\end{cases}
\end{align*}
For $2 \leq r \leq n-1$, 
\begin{align*}
\alpha_{n,2n-r}^{(\ell)}(x_1,\ldots,x_n)=\begin{cases}
\alpha_{n-1,2(n-1)-(r-1)}^{(\ell')}(x_2,\ldots,x_n) \ \ \ {\rm if} \ \ell \rightarrow \ell' \ {\rm is \ defined}, \\
\alpha_{n-1,2(n-1)-(r-1)}^{D_{n-1}}(x_2,\ldots,x_n) \ \ \ {\rm if} \ \ell' \ {\rm is \ undefined}.
\end{cases}
\end{align*}
Here, $\alpha_{i,j}^{D_{n-1}}(x_2,\ldots,x_n)$ denotes the positive root for type $D_{n-1}$ in the variables $x_2,\ldots,x_n$.
\end{lemma}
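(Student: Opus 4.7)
The plan is to reduce Lemma~\ref{lemma:5-18} to a direct unwinding of definitions, driven by a single shift observation. Whenever $\ell \to \ell'$ is defined by one of \eqref{eq:proc1r}, \eqref{eq:proc3r}, \eqref{eq:proc2r}, the procedural sequence for $\ell'$ is obtained by deleting the first term of the sequence for $\ell$; equivalently, $(\ell')^{(s)} = \ell^{(s+1)}$ for all $s \geq 1$, and the arrow $(\ell')^{(s)} \to (\ell')^{(s+1)}$ has the same type (one of \eqref{eq:proc1r}, \eqref{eq:proc3r}, \eqref{eq:proc2r}) as $\ell^{(s+1)} \to \ell^{(s+2)}$. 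This is immediate from the fact that the type of an arrow depends only on the first and last entries of the current vector, which are left unchanged by passage from $\ell$ to $\ell'$ (shifted by one position).

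With this in hand, I would fix indices $i, j$ with $2 \leq i \leq n-1$ and $i+1 \leq j \leq 2n-1-i$, and set $k := n - j + i$. Under the substitution $y_s := x_{s+1}$, the corresponding $D_{n-1}$ index is $k' := (n-1) - (j-1) + (i-1) = k-1$, so $y_{i-1} = x_i$ and $y_{k'} = x_k$. Now compare branch by branch the definition of $\alpha_{i,j}^{(\ell)}$: the $x_k$ branch (triggered by arrow $i$ of $\ell$ being of type \eqref{eq:proc2r} and arrow $k$ of type \eqref{eq:proc3r}) matches the $y_{k'}$ branch of $\alpha_{i-1,j-1}^{(\ell')}$, because under the shift these become arrow $i-1$ and arrow $k-1$ of $\ell'$ respectively, of the same types; the $x_i - x_k$ branch matches $y_{i-1} - y_{k'}$ analogously; and the default branch $\alpha_{i,j}$ matches either the default $\alpha_{i-1,j-1}^{(\ell')}$ (when $\ell'$ exists) or $\alpha_{i-1,j-1}^{D_{n-1}}$ (when it does not). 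The residual identity $\alpha_{i,j}(x_1,\ldots,x_n) = \alpha_{i-1,j-1}^{D_{n-1}}(y_1,\ldots,y_{n-1})$ is then verified from the explicit formulas, split into the subcases $i+1 \leq j \leq n$ (both sides equal $x_i - x_j$) and $n+1 \leq j \leq 2n-1-i$ (both sides equal $x_i + x_{2n-j}$).

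The argument for $\alpha_{n,2n-r}^{(\ell)}$ with $2 \leq r \leq n-1$ is entirely parallel: the $x_r$ branch (arrow $r$ of $\ell$ of type \eqref{eq:proc3r}) matches the $y_{r-1}$ branch of $\alpha_{n-1,2(n-1)-(r-1)}^{(\ell')}$ via the shift, and the default value $x_r + x_n$ matches $y_{r-1} + y_{n-1} = \alpha_{n-1,2(n-1)-(r-1)}^{D_{n-1}}(y_1,\ldots,y_{n-1})$. When $\ell \to \ell'$ is not defined (procedures \eqref{eq:proc2n}, \eqref{eq:proc3n}), the first arrow of $\ell$ is neither of type \eqref{eq:proc2r} nor of type \eqref{eq:proc1r}/\eqref{eq:proc3r} in the way that would affect $\alpha_{i,j}^{(\ell)}$ at $i \geq 2$ or $\alpha_{n,2n-r}^{(\ell)}$ at $r \geq 2$, so these symbols take their default values and the elementary $D_n$-to-$D_{n-1}$ comparisons above close the proof.

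Since the statement is essentially a bookkeeping identity, there is no conceptual obstacle; the only real care needed is in correctly translating the arrow-type conditions across the shift $(\ell')^{(s)} = \ell^{(s+1)}$ and in verifying that the hypotheses $2 \leq i$ and $2 \leq r$ are precisely what is required to guarantee that the arrow determining $\alpha_{i,j}^{(\ell)}$ (resp.\ $\alpha_{n,2n-r}^{(\ell)}$) sits well inside the defined range of procedures for $\ell'$.
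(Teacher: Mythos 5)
Your proposal is correct: the key observation that the procedure sequence of $\ell'$ is the tail of that of $\ell$, i.e.\ $(\ell')^{(s)}=\ell^{(s+1)}$, together with the branch-by-branch comparison of the definitions of $\alpha^{(\ell)}_{i,j}$ and $\alpha^{(\ell')}_{i-1,j-1}$ (and the elementary identification $\alpha^{D_n}_{i,j}(x_1,\ldots,x_n)=\alpha^{D_{n-1}}_{i-1,j-1}(x_2,\ldots,x_n)$), is exactly the verification the paper has in mind when it declares the lemma ``straightforward'' and omits the proof. So you have supplied the intended argument, with no gap.
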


We restate Theorem~\ref{theorem_intro_typeD} as follows.

\begin{theorem} \label{theorem:basisD}
Let $h$ be a  Hessenberg function for type $D_n$ and $\Hess(N,h)$ the associated regular nilpotent Hessenberg variety.
Then, the cohomology classes $v_m^{(h)}$ with $0 \leq \m_i \leq h(i)-i$, form a basis for the cohomology $H^*(\Hess(N,h))$ over $\R$.
\end{theorem}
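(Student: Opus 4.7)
The plan is to mirror the two-stage strategy of Theorem~\ref{theorem:basisA}. Stage one: establish the basis property when $\Hess(N,h)=G/B$ (the flag variety case for type $D_n$), by induction on the rank $n$. Stage two: extend to arbitrary $h$ by decreasing induction on $\dim_\C \Hess(N,h)$, using injectivity of the Gysin map $\iota_!$ (Lemma~\ref{lemma:Gysinmap}) and the commutative diagram \eqref{eq:CDGysinNilpotent}. In both stages, the Poincar\'e polynomial \eqref{eq:PoinHess} combined with Lemma~\ref{lemma:wellknown} shows that the expected dimensions match, so it suffices to prove linear independence of the $v_m^{(h)}$.

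For Stage one, I would introduce a parameter family $R^{\mathbf{D}_{(n)}}$ of quotient rings, obtained from $\R[x_1,\ldots,x_n]/(f^{D_n}_{1,h(1)},\ldots,f^{D_n}_{n,h(n)})$ by giving each summand in \eqref{eq:fD1}--\eqref{eq:fD4} an independent rational coefficient, and prove linear independence of the $v_m^{(h)}$ in any such Artinian $R^{\mathbf{D}_{(n)}}$. Assuming a putative linear relation, pick the minimal value $m_1^\circ$ of $m_1$ in the support. Multiplying the relation by a suitable monomial in the first-row roots together with $x_1$ causes $f^{D_n}_{1,h(1)}$, which splits as the product of the $\alpha_{1,\ell}$ times $x_1$ plus a correction term lying in $\R[x_2,\ldots,x_n]$, to annihilate all contributions with $m_1>m_1^\circ$, leaving a relation in $R^{\mathbf{D}_{(n)}}/(\alpha^{(h-m)}_{1,j})$. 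Lemma~\ref{lemma:key} secures injectivity of this multiplication step, and Lemma~\ref{lemma:5-18} identifies the quotient with a parameterized ring for type $D_{n-1}$ in the variables $x_2,\ldots,x_n$, closing the induction on $n$. The low-rank base cases $D_2 = A_1 \times A_1$ and $D_3 = A_3$ reduce to Theorem~\ref{theorem:basisA} (or are verified directly).

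For Stage two, fix $h$ and choose $h'$ that covers $h$, so that $h' \setminus h$ consists of a single positive root $\alpha_{k,h(k)+1}$. By \eqref{eq:CDGysinNilpotent}, $\iota_!$ corresponds to multiplication by $\beta^{h'}_h$, which is a non-zero rational multiple of this added root. The key step is to show that for each $m$, the image $\iota_!(v_m^{(h)})$ is a non-zero scalar multiple of some basis element $v_{m'}^{(h')}$, and that the assignment $m \mapsto m'$ is injective. Linear independence of the $v_{m'}^{(h')}$ (from the inductive hypothesis) then forces $c_m=0$ in any putative linear dependence among the $v_m^{(h)}$.

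The main obstacle is the procedure \eqref{eq:proc1r}--\eqref{eq:proc3n} governing the factors $\alpha^{(\ell)}_{i,j}$: the substitution rules switch between $\alpha_{i,j}$, $x_k$, and $x_i-x_k$ depending on whether the coordinates of $\ell$ cross certain thresholds involving $n$. One must carefully track how adding a single box to $h$ alters the sequence $\ell \to \ell' \to \cdots$ and hence the factors appearing in $v_m^{(h)}$ versus $v_{m'}^{(h')}$. Lemma~\ref{lemma:5-18} supplies the needed compatibility between the two stages of the procedure, but matching the products and verifying that the scalar factor remains non-zero demands a delicate case analysis on which of \eqref{eq:proc1r}--\eqref{eq:proc3n} is active at each step, especially when the procedures \eqref{eq:proc2n} or \eqref{eq:proc3n} terminate the sequence early.
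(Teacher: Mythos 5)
Your Stage two is in outline the same as the paper's reduction (its Lemma~\ref{lemma:D1}): decreasing induction on $\sum_i(h(i)-i)$, injectivity of $\iota_!$, and the commutative diagram \eqref{eq:CDGysinNilpotent}. What you correctly flag as ``delicate'' is in fact the entire content there: one must prove $\alpha^{(h'-m)}_{k,h'(k)}=\alpha_{k,h'(k)}$ for the single added box, so that $\iota_!(v^{(h)}_m)=v^{(h')}_{m'}$ on the nose; the paper does this by a two-case analysis ($k\neq n$ and $k=n$) using the defining conditions (5),(6) of type $D_n$ Hessenberg functions. That part of your plan is workable, though unexecuted.

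The genuine gap is Stage one. The type $A$ mechanism you propose does not transfer, for two structural reasons. First, for $h=h^{(n)}_{max}$ the generator attached to the first row is $f^{D_n}_{1,2n-2}=\prod_{\ell=2}^{n}(x_1^2-x_\ell^2)+(-1)^n n\,x_2^2\cdots x_n^2$: it is \emph{not} a product of linear forms (and contains no factor $x_1$ at all), so in the quotient ring the product of all first-row factors equals a nonzero monomial rather than $0$. Consequently, multiplying a putative relation by the ``complementary first-row roots times $x_1$'' does not annihilate the terms with $m_1>m_1^\circ$, and Lemma~\ref{lemma:key} cannot be applied to $f^{D_n}_{1,h(1)}$ because it admits no factorization $g'\cdot g''$ with $g'$ a single root. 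Second, the quotient of a type $D$ parameterized ring by a single linear form is \emph{not} again a type $D_{n-1}$ parameterized ring: Lemma~\ref{lemma:5-18} only governs the combinatorial substitution rules for the factors $\alpha^{(\ell)}_{i,j}$, not ring quotients, and the actual quotients (by $x_j$, $x_1+x_n$, etc.) are the mixed rings $R^{(\mathbf{B}_{(n-1)},\mathbf{D}_{(n-1)})}(j)$ of Lemma~\ref{lemma:D1.5}, which need their own basis theorem (Proposition~\ref{proposition:D2}). This is why the paper does not quotient $R^{\mathbf{D}_{(n)}}$ by one root directly, but instead filters it through the chain $h_1\subset\cdots\subset h_n\subset\tilde h_n\subset\cdots\subset\tilde h_{2n-2}=h^{(n)}_{max}$, using explicit identities such as $f^{\mathbf{d}_1}_{1,n-1}+f^{\mathbf{d}_1}_{n,2n-2}=2(x_1-x_{n-1})f^{\mathbf{d}_1}_{1,n-2}$ and $(-1)^np_1x_1\cdots x_n+f^{\mathbf{d}_1}_{1,n}=(x_1+x_n)f^{\mathbf{d}_1}_{n,2n-2}$ to get short exact sequences \eqref{eq:exactD3-1'}--\eqref{eq:exactD3-3'} via Lemma~\ref{lemma:key}, and then matches bases along each sequence (Claims 1--3), with the modified factors $x_k$, $x_i-x_k$ in $v^{(h)}_m$ tracked through \eqref{eq:D3Claim2-5} and \eqref{eq:D3Claim3-5}. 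Without some replacement for this apparatus (handling the non-factorizable correction terms and the B/D-mixed quotients), your induction on $n$ for the flag case does not close.
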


The following lemma tells us that it is enough to prove the special case of Theorem~\ref{theorem:basisD} when $\Hess(N,h)$ is the whole flag variety $G/B$. 
Note that the maximal Hessenberg function $h^{(n)}_{\max}$ for type $D_n$ is given by 
\begin{equation*} 
h^{(n)}_{\max}:=(2n-2,2n-3,\ldots,n,2n-1).
\end{equation*}

\begin{lemma} \label{lemma:D1}
If Theorem~$\ref{theorem:basisD}$ holds for $h=h^{(n)}_{\max}$, then Theorem~$\ref{theorem:basisD}$ is true for any Hessenberg function $h$.
\end{lemma}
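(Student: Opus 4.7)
The plan is to proceed by decreasing induction on $\dim_\C \Hess(N,h) = \sum_{i=1}^{n}(h(i)-i)$, mirroring the structure of the final paragraph of the proof of Theorem~\ref{theorem:basisA}. The base case $h = h^{(n)}_{max}$, corresponding to $\Hess(N,h)=G/B$, is the hypothesis of the lemma. The Poincar\'e polynomial formula \eqref{eq:PoinHess} shows that the cardinality of $\{v_m^{(h)} \mid 0 \leq m_i \leq h(i)-i\}$ matches $\dim_\R H^*(\Hess(N,h))$, so it is enough to prove linear independence throughout the inductive step.

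For the inductive step, suppose $h \subsetneq h^{(n)}_{max}$. I would choose a Hessenberg function $h'$ obtained from $h$ by incrementing a single value, so that $h'(k)=h(k)+1$ for some $k$ and $h'(i)=h(i)$ for $i \neq k$. The Gysin map $\iota_!\colon H^*(\Hess(N,h)) \hookrightarrow H^*(\Hess(N,h'))$ is injective by Lemma~\ref{lemma:Gysinmap}, so it suffices to check that $\{\iota_!(v_m^{(h)})\}_m$ is linearly independent in $H^*(\Hess(N,h'))$. By the commutative diagram \eqref{eq:CDGysinNilpotent}, $\iota_!(v_m^{(h)})$ equals $\alpha_{k,h'(k)} \cdot v_m^{(h)}$ up to the non-zero scalar $|W_h|/|W_{h'}|$. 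The objective is then to identify this image, up to a non-zero scalar, with the basis candidate $v_{m^+}^{(h')}$ for $h'$, where $m^+=(m_1,\ldots,m_k+1,\ldots,m_n)$. Since the map $m \mapsto m^+$ is injective, the inductive hypothesis for $h'$ would immediately provide the required linear independence.

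The main obstacle is verifying this identification $\alpha_{k,h'(k)} \cdot v_m^{(h)} \equiv c \cdot v_{m^+}^{(h')} \pmod{\mathfrak{a}(h')}$ for a non-zero scalar $c$. The first observation is that both $v_m^{(h)}$ and $v_{m^+}^{(h')}$ share the same sequence $\ell := h-m = h'-m^+$, so the modified roots $\alpha_{i,j}^{(\ell)}$ appearing in both are literally the same; in the polynomial ring the two products differ only by the single factor $\alpha_{k,h'(k)}$ versus $\alpha_{k,h'(k)}^{(\ell)}$. Thus the claim reduces to showing
\[
\bigl(\alpha_{k,h'(k)} - c\, \alpha_{k,h'(k)}^{(\ell)}\bigr)\cdot v_m^{(h)} \in \mathfrak{a}(h')
\]
for an appropriate $c$. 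This is the technical heart of the argument and will demand a case analysis tracking the procedure \eqref{eq:proc1r}--\eqref{eq:proc3n} for $\ell$: in the cases where $\alpha_{k,h'(k)}^{(\ell)} = \alpha_{k,h'(k)}$ (which by Lemma~\ref{lemma:5-18} include the cases where the procedure terminates or never reaches step $k$), the claim is immediate with $c=1$. In the remaining cases, where the modification replaces $\alpha_{k,h'(k)}$ by a single variable $x_r$, the discrepancy should be accounted for by the defining relations $f^{D_n}_{i,h'(i)}$ from \eqref{eq:fD2}--\eqref{eq:fD4}: the monomial correction terms $(-1)^{n-i+1} n\, x_{i+1}\cdots x_n$ and their analogues are engineered precisely so that multiplying by the remaining factors of $v_m^{(h)}$ clears the difference. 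Working through the compatibility between the procedure and these specific generators, likely by direct computation inside $\CR/\mathfrak{a}(h')$, is the step that will require the most care.
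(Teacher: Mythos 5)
Your overall architecture is the same as the paper's: decreasing induction on $\sum_{i}(h(i)-i)$ with base case $h^{(n)}_{max}$, injectivity of the Gysin map from Lemma~\ref{lemma:Gysinmap}, the identification of $\iota_!(v_m^{(h)})$ with $\alpha_{k,h'(k)}\cdot v_m^{(h)}$ up to a non-zero scalar via \eqref{eq:CDGysinNilpotent}, and the correct observation that $h-m=h'-m^{+}$, so all the modified factors $\alpha^{(\ell)}_{i,j}$ occurring in $v_m^{(h)}$ and in $v_{m^{+}}^{(h')}$ are literally the same. However, the step you defer---showing $\alpha_{k,h'(k)}\cdot v_m^{(h)}$ agrees with $v_{m^{+}}^{(h')}$---is precisely the content of the lemma, and your proposed treatment of the ``remaining cases'' (absorbing the discrepancy $\bigl(\alpha_{k,h'(k)}-c\,\alpha^{(\ell)}_{k,h'(k)}\bigr)v_m^{(h)}$ into $\mathfrak{a}(h')$ by exploiting the correction terms of \eqref{eq:fD2}--\eqref{eq:fD4}) is neither carried out nor justified; you would in addition have to produce a non-zero scalar $c$ and control signs. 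As written, this is a genuine gap.

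What you are missing is that those ``remaining cases'' never occur: because $h$ and $h'$ are Hessenberg functions for type $D_n$, conditions $(5)$ and $(6)$ in their definition force $\alpha^{(h'-m)}_{k,h'(k)}=\alpha_{k,h'(k)}$ exactly, for every $m$ (this is equation \eqref{eq:D1.1} in the paper). Concretely, write $j:=h'(k)=h(k)+1$. If $k\neq n$ and $j\geq n$, then $n-j+k\leq k$ and the definition of $\alpha^{(\ell)}_{k,j}$ returns the honest root immediately. If $k\neq n$ and $j<n$, then $h(k)=j-1<n-1$, so condition $(6)$ gives $h'(n)=h(n)<2n-k$; consequently, for $\ell=h'-m$ the last coordinate at each of the first $k$ steps of the procedure is strictly below the threshold $2(\,\cdot\,)-1$, so each of these steps is \eqref{eq:proc1r} or \eqref{eq:proc3r} and hence $\alpha^{(\ell)}_{k,j}=\alpha_{k,j}$. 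If $k=n$, write $j=2n-r$; then $h(n)=j-1<2n-r$, so condition $(5)$ gives $h'(r)=h(r)\leq n$, and a short estimate on the first and last coordinates of $\ell^{(r)}$ shows the $r$-th step is \eqref{eq:proc1r}, whence $\alpha^{(\ell)}_{n,j}=\alpha_{n,j}$. With this identity one gets $\alpha_{k,h'(k)}\cdot v_m^{(h)}=v_{m^{+}}^{(h')}$ on the nose (so $c=1$, no ideal-membership argument is needed), and your induction closes exactly as in the paper; without it, the proof is incomplete.
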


\begin{proof}
By \eqref{eq:PoinHess} it is enough to prove that the cohomology classes $v_m^{(h)}$ in \eqref{eq:vmhD} are linearly independent in $H^*(\Hess(N,h))$. 
We proceed by decreasing induction on the dimension $d_h:=\sum_{i=1}^n (h(i)-i)$ of $\Hess(N,h)$. The base case $h=h^{(n)}_{\max}$ is nothing but the assumption of Lemma~\ref{lemma:D1}.
Now suppose that $d_h<d_{h^{(n)}_{\max}}$ and the claim holds for arbitrary Hessenberg function $\tilde h$ with $d_h < d_{\tilde h}$.
Since $h$ is not maximal, we can take a Hessenberg function $\tilde h$ such that $j:=\tilde h(i)=h(i)+1$ for some $i$ and $\tilde h(p)=h(p)$ for any $p \neq i$.
Then we show that 
\begin{equation} \label{eq:D1.1}
\alpha^{(\tilde h-m)}_{i,j}=\alpha_{i,j}
\end{equation}
for arbitrary $m=(m_1,\ldots,m_n)$ with $0 \leq m_k \leq h(k)-k$ for all $k \in [n]$.

\noindent
\textbf{Case (i):} Suppose that $1 \leq i \leq n-1$.
Let $k=n-j+i$. 
If $k \leq i$, then one has $\alpha^{(\tilde h-m)}_{i,j}=\alpha_{i,j}$ by the definition of $\alpha^{(\ell)}_{i,j}$.
We may assume that $k >i$, namely $n > j$.
Since $h(i)=j-1 < n-1$, one obtains $\tilde h(n)=h(n) < 2n-i$ by the definition $(6)$ of the Hessenberg function for type $D_n$.
Then, the last entry of $\ell:=\tilde h-m=(\tilde h(1)-m_1,\ldots,\tilde h(n)-m_n)$ satisfies the inequality
$$
\tilde h(n)-m_n < 2n-i-m_n.
$$
This means that each step of procedures $\ell=\ell^{(1)} \rightarrow \cdots \rightarrow \ell^{(i+1)}$ is either \eqref{eq:proc1r} or \eqref{eq:proc3r} even if $\ell^{(i+1)}$ is defined. 
Therefore, we obtain $\alpha^{(\tilde h-m)}_{i,j}=\alpha_{i,j}$ by the definition of $\alpha^{(\ell)}_{i,j}$.

\noindent
\textbf{Case (ii):} Suppose that $i = n$. 
We put $j=2n-r$ for some $r$ with $1 \leq r \leq n-1$.
Since $h(n)=\tilde h(n)-1=j-1=2n-r-1< 2n-r$, we have 
\begin{equation} \label{eq:D1.2}
\tilde h(r)=h(r) \leq n
\end{equation} 
by the definition $(5)$ of Hessenberg functions for type $D_n$.
The last entry of $\ell:=\tilde h-m=(\tilde h(1)-m_1,\ldots,\tilde h(n)-m_n)$ satisfies 
$$
\tilde h(n)-m_n=j-m_n=2n-r-m_n \leq 2n-r,
$$
which implies that procedures $\ell=\ell^{(1)} \rightarrow \cdots \rightarrow \ell^{(r)}$ are defined and each step is either \eqref{eq:proc1r} or \eqref{eq:proc3r}.
By \eqref{eq:D1.2} the last and the first entries of $\ell^{(r)}$ are given by
\begin{align*}
&\tilde h(n)-m_n-(r-1)=2n-r-m_n-(r-1) \leq 2(n-r+1)-1, \\
&\tilde h(r)-m_r-(r-1) \leq h'(r)-(r-1) \leq n-r+1,
\end{align*}
respectively.
This deduces that $\ell^{(r+1)}$ is defined and $\ell^{(r)} \rightarrow \ell^{(r+1)}$ is obtained from \eqref{eq:proc1r}, and hence $\alpha^{(\tilde h-m)}_{n,j}=\alpha_{n,j}$.

By \eqref{eq:D1.1} the image of $v_m^{(h)}$ under the Gysin map $\iota_{!}: H^*(\Hess(N,h)) \rightarrow H^*(\Hess(N,\tilde h))$ is equal to
\begin{equation} 
\alpha_{i,\tilde h(i)} \cdot v_m^{(h)} = \alpha^{(\tilde h-m)}_{i,\tilde h(i)} \cdot v_m^{(h)}=v_{\tilde m}^{(\tilde h)}
\end{equation}
up to a non-zero scalar multiplication where $\tilde m=(m_1,\ldots,m_{i-1},m_i+1,m_{i+1},\ldots,m_n)$.
By the inductive hypothesis on $d_h$ we know that these classes $v_{\tilde m}^{(\tilde h)}$ are linearly independent in $H^*(\Hess(N,\tilde h))$.
By the injectivity of $\iota_{!}$ (Lemma~\ref{lemma:Gysinmap}) the cohomology classes $v_{m}^{(h)}$ are linearly independent in $H^*(\Hess(N,h))$, as desired. 
\end{proof}

The proof of Theorem~\ref{theorem:basisD} for the case when $h=h^{(n)}_{\max}$ is more technical, so we here sketch the outline of the proof. For more details, see Appendix~\ref{appendix:A}.
As discussed in type $A$, we slightly generalize the definition of $f_{i,j}^{D_{n}}$.
For $i \in [n-1]$ and $\mathbf{d}_i=(d_{i1}, \ldots, d_{ii}, p_i) \in \R^{i+1}$ we define a polynomial 
\begin{equation*}
f^{\mathbf{d}_i}_{i,2n-1-i}:=\sum_{k=1}^i d_{ik} (x_k^2-x_{i+1}^2)\cdots(x_k^2-x_n^2) + (-1)^{n-i+1}p_i \, x_{i+1}^2 \cdots x_n^2. 
\end{equation*}
For $\mathbf{D}_{(n)}=(\mathbf{d}_i \in \R^{i+1} \mid 1 \leq i \leq n-1)$ we define a ring
\begin{align*} 
R^{\mathbf{D}_{(n)}}:=\R[x_1,\ldots,x_n]/(f^{\mathbf{d}_i}_{i,2n-1-i}, x_1 \cdots x_n \mid 1 \leq i \leq n-1). 
\end{align*}
Note that if $\mathbf{d}_i=(1, \ldots, 1, 1) \in \R^{i+1}$ for all $i \in [n-1]$, then $R^{\mathbf{D}_{(n)}}$ is isomorphic to $H^*(G/B)$ by Theorem~\ref{theorem:cohomologyD}.
We construct the additive basis of $R^{\mathbf{D}_{(n)}}$ by induction on $n$.
For this purpose, we consider the special Hessenberg functions 
\begin{align}
h_j:=&(j,h^{(n)}_{\max}(2),\ldots,h^{(n)}_{\max}(n-1),h^{(n)}_{\max}(n)-1) \ \ \ {\rm for} \ j=1,2,\ldots,n; \label{eq:hj_typeD} \\
\tilde{h}_j:=&(j,h^{(n)}_{\max}(2),\ldots,h^{(n)}_{\max}(n-1),h^{(n)}_{\max}(n)) \ \ \ \ \ \ \ \ {\rm for} \ j=n,n+1,\ldots,2n-2 \label{eq:hj_tilde_typeD} 
\end{align}
and the sequence 
$$
h_1 \subset h_2 \subset \cdots \subset h_n \subset \tilde{h}_n \subset \tilde{h}_{n+1} \subset \cdots \subset \tilde{h}_{2n-2}=h^{(n)}_{\max}.
$$
Note that the Hessenberg function $h_1$ can be regarded as the maximal Hessenberg function $h^{(n-1)}_{\max}$ for type $D_{n-1}$.
For $1 \leq j \leq 2n-2$ and $\mathbf{d}_1:=(d_{11}, p_1) \in \R^{2}$ we define polynomials 
\begin{equation*}
f^{\mathbf{d}_1}_{1,j}:=\begin{cases}
d_{11} (x_1-x_2)\cdots(x_1-x_j)x_1 \ \ \ \ \ \ {\rm if} \ 1 \leq j \leq n-2, \\ 
d_{11} (x_1-x_2)\cdots(x_1-x_{n-1})(x_1+x_n) + (-1)^{n-1}p_1 \ x_2 \cdots x_n \ \ \ \ \ {\rm if} \ j = n-1, \\ 
d_{11} (x_1-x_2)\cdots(x_1-x_{n-1-k})(x_1^2-x_{n-k}^2)\cdots(x_1^2-x_n^2) \\
+ (-1)^{n}p_1 \, x_2 \cdots x_{n-1-k} x_{n-k}^2 \cdots x_n^2 \ \ \ \ \ \ {\rm if} \ j=n+k \ {\rm with} \ 0 \leq k \leq n-2,
\end{cases} 
\end{equation*}
and
\begin{equation*}
f^{\mathbf{d}_1}_{n,2n-2}:=d_{11} (x_1-x_2)\cdots(x_1-x_n) + (-1)^{n}p_1 \ x_2 \cdots x_n.
\end{equation*}
Similarly, for $\mathbf{D}_{(n)}=(\mathbf{d}_i \in \R^{i+1} \mid 1 \leq i \leq n-1)$ and the Hessenberg function $h_j$ (or $\tilde h_j$) we define the following two rings
\begin{align*} 
R^{\mathbf{D}_{(n)}}_{h_j}:=&\R[x_1,\ldots,x_n]/(f^{\mathbf{d}_1}_{1,j}, f^{\mathbf{d}_i}_{i,2n-1-i}, f^{\mathbf{d}_1}_{n,2n-2} \mid 2 \leq i \leq n-1) \ \ \ \\
&{\rm for} \ j=1,\ldots,n, \\
R^{\mathbf{D}_{(n)}}_{\tilde{h}_j}:=&\R[x_1,\ldots,x_n]/(f^{\mathbf{d}_1}_{1,j}, f^{\mathbf{d}_i}_{i,2n-1-i}, x_1 \cdots x_n \mid 2 \leq i \leq n-1) \ \ \ \\
&{\rm for} \ j=n,\ldots,2n-2.
\end{align*}
In what follows, we assume that these rings are Artinian.
Note that $R^{\mathbf{D}_{(n)}}_{\tilde{h}_{2n-2}}=R^{\mathbf{D}_{(n)}}$.
We can also see that 
\begin{equation} \label{eq:D3-1}
R^{\mathbf{D}_{(n)}}_{h_1} \cong R^{\mathbf{D}'_{(n-1)}}
\end{equation}
for some $\mathbf{D}'_{(n-1)}=(\mathbf{d}'_i \in \R^{i+1} \mid 1 \leq i \leq n-2)$
where we regard the variables in $R^{\mathbf{D}'_{(n-1)}}$ as $x_2,\ldots,x_n$.

Lemma~\ref{lemma:key} leads us to the following exact sequence
\begin{equation} \label{eq:exactD3-1}
0 \rightarrow R^{\mathbf{D}_{(n)}}_{h_{j-1}} \xrightarrow{\times (x_1-x_j)} R^{\mathbf{D}_{(n)}}_{h_j} \rightarrow R^{\mathbf{D}_{(n)}}_{h_j}/(x_1-x_j) \rightarrow 0
\end{equation}
for $2 \leq j \leq n$ since it follows that
\begin{align*}
&f^{\mathbf{d}_1}_{1,j} = (x_1-x_j) f^{\mathbf{d}_1}_{1,j-1} \ \ \ {\rm for} \ 2 \leq j \leq n-2, \\
&f^{\mathbf{d}_1}_{1,n-1}+f^{\mathbf{d}_1}_{n,2n-2} = 2(x_1-x_{n-1}) f^{\mathbf{d}_1}_{1,n-2}, \\
&2f^{\mathbf{d}_1}_{1,n}-(x_1+x_n)f^{\mathbf{d}_1}_{n,2n-2} = (x_1-x_n) f^{\mathbf{d}_1}_{1,n-1}. 
\end{align*}
Similarly, we obtain the exact sequences
\begin{align} 
&0 \rightarrow R^{\mathbf{D}_{(n)}}_{h_{n}} \xrightarrow{\times (x_1+x_n)} R^{\mathbf{D}_{(n)}}_{\tilde{h}_n} \rightarrow R^{\mathbf{D}_{(n)}}_{\tilde{h}_n}/(x_1+x_n) \rightarrow 0, \label{eq:exactD3-2} \\
&0 \rightarrow R^{\mathbf{D}_{(n)}}_{\tilde{h}_{j-1}} \xrightarrow{\times (x_1+x_{2n-j})} R^{\mathbf{D}_{(n)}}_{\tilde{h}_j} \rightarrow R^{\mathbf{D}_{(n)}}_{\tilde{h}_j}/(x_1+x_{2n-j}) \rightarrow 0, \label{eq:exactD3-3} 
\end{align}
for $n+1 \leq j \leq 2n-2$ because $(-1)^n p_1 \, x_1 \cdots x_n + f^{\mathbf{d}_1}_{1,n} = (x_1+x_n) f^{\mathbf{d}_1}_{n,2n-2}$ and $f^{\mathbf{d}_1}_{1,n+k} + (-1)^n p_1 \, x_{n-k+1} \cdots x_n \cdot (x_1 \cdots x_n) = (x_1+x_{n-k}) f^{\mathbf{d}_1}_{1,n+k-1}$, respectively.

Using \eqref{eq:exactD3-1}, \eqref{eq:exactD3-2}, and \eqref{eq:exactD3-3}, we construct the additive basis of $R^{\mathbf{D}_{(n)}}$ by the inductive step (cf. Example~\ref{example:D3}).
In order to proceed to the inductive step, we must know a basis of the cokernel $R^{\mathbf{D}_{(n)}}_{h_j}/(x_1-x_j)$ and $R^{\mathbf{D}_{(n)}}_{\tilde{h}_j}/(x_1+x_{2n-j})$. 
We discuss an additive basis for the cokernel in \S\ref{subsect:A.1}. 
Using the exact sequences \eqref{eq:exactD3-1}, \eqref{eq:exactD3-2}, and \eqref{eq:exactD3-3} repeatedly, we can eventually construct the additive basis of $R^{\mathbf{D}_{(n)}}$. (See \S\ref{subsect:A.2} for more details.)

\bigskip

\section{Linear independence}
\label{section:Poincareduals}

We obtained Theorem~\ref{theorem:Intro_ABCDE6FG} for types $A,B,C,G$ as a corollary of Theorem~\ref{theorem_intro}.
Our additive basis of $H^*(\Hess(N,h))$ in type $D_n$ is not necessarily a monomial in positive roots, unlike type $A, B, C, G$ cases.
Nevertheless, a basis $\{v_m^{(h)} \mid 0 \leq \m_i \leq h(i)-i \}$ of $H^*(\Hess(N,h))$ contains the set $\{[\Hess(N,h')] \in H^*(\Hess(N,h)) \mid h' \subset h \}$ of all Poincar\'e duals of smaller regular nilpotent Hessenberg varieties.

\begin{lemma} \label{lemma:linear independence_typeD}
Let $h$ and $h'$ be Hessenberg functions for type $D_n$ with $h' \subset h$.
Then, we have 
$$
\alpha^{(h')}_{i,j}=\alpha_{i,j}
$$
for $1 \leq i \leq n$ and $h'(i)+1 \leq j \leq h(i)$.
\end{lemma}

\begin{proof}
We put $\ell=h'$. 
Note that if the procedure $\ell \rightarrow \ell'$ can be defined, then $\ell'$ is a Hessenberg function for type $D_{n-1}$ because $\ell$ is a Hessenberg function for type $D_n$. \\
\textbf{Case (i):} Suppose that $1 \leq i \leq n-1$.
If $\ell'$ is undefined, then $\alpha^{(\ell)}_{i,j}$ is the positive root $\alpha_{i,j}$ from Lemma~\ref{lemma:5-18} and $\alpha^{D_n}_{i,j}(x_1,\ldots,x_n)=\alpha^{D_{n-1}}_{i-1,j-1}(x_2,\ldots,x_n)$.
We may assume that the procedure $\ell \rightarrow \ell'$ is defined. 
Then, one has $\alpha^{(\ell)}_{i,j}(x_1,\ldots,x_n)=\alpha^{(\ell')}_{i-1,j-1}(x_2,\ldots,x_n)$ by  Lemma~\ref{lemma:5-18} again. 
Since $\ell'$ is a Hessenberg function for type $D_{n-1}$ and $\alpha^{D_n}_{i,j}(x_1,\ldots,x_n)=\alpha^{D_{n-1}}_{i-1,j-1}(x_2,\ldots,x_n)$, it is enough to prove the desired equality for the case $i=1$, namely $\alpha^{(\ell)}_{1,j}=\alpha_{1,j}$. 
By the definition of $\alpha_{1,j}^{(\ell)}$, we may assume that $\ell \rightarrow \ell'$ is obtained from \eqref{eq:proc2r}. 
Then, since $\ell_n=2n-1$ and $\ell$ is a Hessenberg function, one has $\ell_1=n-1$ and hence  $k:=n-j+1 \leq n-(h'(1)+1)+1=n-\ell_1=1$. 
This implies that $\alpha^{(\ell)}_{1,j}=\alpha_{1,j}$. \\
\textbf{Case (ii):} Suppose that $i = n$. We show that $\alpha^{(\ell)}_{n,2n-r}=\alpha_{n,2n-r}$. 
As in the above case, it suffices to show that $\alpha^{(\ell)}_{n,2n-1}=\alpha_{n,2n-1}$. 
If $\ell_n < 2n-1$, then $\ell_1 \leq n$ because $\ell$ is a Hessenberg function. 
Hence, $\ell \xrightarrow{\eqref{eq:proc3r}} \ell'$ can not happen. 
This implies that $\alpha^{(\ell)}_{n,2n-1}=\alpha_{n,2n-1}$, as desired. 
\end{proof}
Fix a Hessenberg function $h$ for type $D_n$.
For a smaller Hessenberg function $h' \subset h$ we put $m=h-h'$.
Then, $v^{(h)}_m$ is the Poincar\'e dual $[\Hess(N,h')]$ in $H^*(\Hess(N,h))$ up to a non-zero scalar multiplication by Lemma~\ref{lemma:linear independence_typeD} and \eqref{eq:PoincaredualNilpotent_h}.
Therefore, we obtain Theorem~\ref{theorem:Intro_ABCDE6FG} for type $D$ as a corollary of Theorem~\ref{theorem:basisD}.

We do expect the analogue of the linear independence for other exceptional types.
In fact, using an explicit presentation of the cohomology ring $H^*(\Hess(N,h))$ given in \cite[Corollary~7.2]{EHNT1}, we can prove the linear independence in types $F_4$ and $E_6$ by Maple\footnote{The program is available at https://researchmap.jp/ehrhart/Database/.}.
We restate Theorem~\ref{theorem:Intro_ABCDE6FG} as follows.

\begin{theorem} \label{theorem:PoincaredualABCDE6FG}
Let $h$ be a Hessenberg function for types $A_n,B_n,C_n,D_n, E_6, F_4$, or $G_2$ and $\Hess(N,h)$ the associated regular nilpotent Hessenberg variety. 
Then, the set of the Poincar\'e duals 
\begin{equation*} 
\{[\Hess(N,h')] \in H^*(\Hess(N,h)) \mid h' \subset h \}
\end{equation*}
is linearly independent. 
\end{theorem}

\bigskip

\appendix
\section{More details for type $D$} \label{appendix:A}

This appendix accounts for the details of the proof of Theorem~\ref{theorem:basisD}.

\subsection{Construction of a basis for $R^{(\mathbf{B}_{(n)},\mathbf{D}_{(n)})}(j)$} \label{subsect:A.1}

Recall from Section~\ref{section:typeD} that the polynomials $f^{\mathbf{d}_i}_{i,2n-1-i} \ (1 \leq i \leq n-1)$ and the associated ring $R^{\mathbf{D}_{(n)}}$ are defined as
\begin{align*}
f^{\mathbf{d}_i}_{i,2n-1-i}=&\sum_{k=1}^i d_{ik} (x_k^2-x_{i+1}^2)\cdots(x_k^2-x_n^2) + (-1)^{n-i+1}p_i \, x_{i+1}^2 \cdots x_n^2 \\
& {\rm for} \ i \in [n-1] \ {\rm and} \ \mathbf{d}_i=(d_{i1},\ldots,d_{ii}, p_i) \in \R^{i+1}, \\
R^{\mathbf{D}_{(n)}}=&\R[x_1,\ldots,x_n]/(f^{\mathbf{d}_i}_{i,2n-1-i}, x_1 \cdots x_n \mid 1 \leq i \leq n-1) \\
& {\rm for} \ \mathbf{D}_{(n)}=(\mathbf{d}_i \in \R^{i+1} \mid 1 \leq i \leq n-1).
\end{align*}
In this section we begin with the definition of analogues of the polynomial and the ring above.
For $i \in [n]$ and $\mathbf{b}_i=(b_{i1},\ldots,b_{ii}) \in \R^i$, define a polynomial
\begin{equation*}
g^{\mathbf{b}_i}_{i,2n+1-i}:=\sum_{k=1}^i b_{ik} (x_k^2-x_{i+1}^2)\cdots(x_k^2-x_n^2) x_k^2.
\end{equation*}
Note that the polynomial $g^{\mathbf{b}_i}_{i,2n+1-i}$ is a slight generalization of the polynomial which is used to describe the cohomology ring of $\Hess(N,h)$ in type $B_n$ (see \cite[Section~10.3]{AHMMS}).
For $\mathbf{D}_{(n)}=(\mathbf{d}_i \in \R^{i+1} \mid 1 \leq i \leq n-1)$ and $\mathbf{B}_{(n)}=(\mathbf{b}_i \in \R^i \mid 1 \leq i \leq n)$ we define rings 
\begin{align*} 
R^{\mathbf{D}_{(n)}}(j):=&\R[x_1,\ldots,x_n]/(f^{\mathbf{d}_i}_{i,2n-1-i}, (x_1 \cdots x_n)x_j \mid 1 \leq i \leq n-1) \ \ \ \\
&{\rm for} \ j=1,\ldots,n, \\
R^{(\mathbf{B}_{(n)},\mathbf{D}_{(n)})}(j):=&\R[x_1,\ldots,x_n]/(g^{\mathbf{b}_p}_{p,2n+1-p}, f^{\mathbf{d}_{q-1}}_{q-1,2n-1-(q-1)} \mid 1 \leq p \leq j-1, j \leq q \leq n) \ \ \ \\
&{\rm for} \ j=1,\ldots,n+1,
\end{align*}
where $f^{\mathbf{d}_{0}}_{0,2n-1}:=x_1^2 \cdots x_n^2$.

The following lemma can be proved by using the same argument for the proof of Lemma~\ref{lemma:setsudouringA}.

\begin{lemma} \label{lemma:D1.5}
For $\mathbf{D}_{(n)}=(\mathbf{d}_i \mid 1 \leq i \leq n-1)$ and $1 \leq j \leq n$, we have 
$$
R^{\mathbf{D}_{(n)}}(j)/(x_j) \cong R^{(\mathbf{B}'_{(n-1)},\mathbf{D}'_{(n-1)})}(j)
$$
for some $\mathbf{B}'_{(n-1)}=(\mathbf{b}'_i \in \R^i \mid 1 \leq i \leq n-1)$ and $\mathbf{D}'_{(n-1)}=(\mathbf{d}'_i \in \R^{i+1} \mid 1 \leq i \leq n-2)$. 
Here, we regard the variables in the ring $R^{(\mathbf{B}'_{(n-1)},\mathbf{D}'_{(n-1)})}(j)$ as $x_1,\ldots,\widehat{x_{j}},\ldots,x_n$.
The caret sign \ $\widehat{}$ \ over $x_j$ means that the entry is omitted.
\end{lemma}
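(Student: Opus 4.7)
The plan is to construct the claimed isomorphism by computing, for each defining relation of $R^{\mathbf{D}_{(n)}}(j)$, its image in the quotient $R^{\mathbf{D}_{(n)}}(j)/(x_j)$, and then reading off parameters $\mathbf{B}'_{(n-1)}$ and $\mathbf{D}'_{(n-1)}$ that make the residues coincide with the defining relations of $R^{(\mathbf{B}'_{(n-1)},\mathbf{D}'_{(n-1)})}(j)$. The extra relation $(x_1 \cdots x_n)\,x_j$ is killed outright upon setting $x_j = 0$, so only the $n-1$ polynomials $f^{\mathbf{d}_i}_{i,\,2n-1-i}$ for $1 \le i \le n-1$ require analysis, and I would split this into the cases $i < j$ and $i \ge j$.

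For $i < j$, the index $j$ belongs to $\{i+1,\ldots,n\}$, so $x_j$ appears in each factor $(x_k^2 - x_j^2)$ of the sum as well as in the tail $p_i\,x_{i+1}^2\cdots x_n^2$. Setting $x_j = 0$ converts $(x_k^2 - x_j^2)$ into $x_k^2$ and annihilates the tail entirely. After relabeling $x_1,\ldots,\widehat{x_j},\ldots,x_n$ as $y_1,\ldots,y_{n-1}$, the residue coincides with $g^{\mathbf{b}'_i}_{i,\,2(n-1)+1-i}$ where $b'_{ik} = d_{ik}$, giving the $g$-type generators of the target (for $p = 1, \ldots, j-1$).

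For $i \ge j$, the index $j$ lies outside $\{i+1,\ldots,n\}$, so $x_j$ enters only through the summand indexed by $k = j$, which collapses under $x_j = 0$ to $(-1)^{n-i} d_{ij}\,x_{i+1}^2 \cdots x_n^2$ and merges with the unchanged tail into $(-1)^{n-i+1}(p_i - d_{ij})\,x_{i+1}^2 \cdots x_n^2$. The surviving summands ($k \ne j$, $k \le i$) retain the shape of an $f$-type polynomial with one fewer term. Relabeling variables, the residue equals $f^{\mathbf{d}'_{q-1}}_{q-1,\,2(n-1)-1-(q-1)}$ with $q = i$, where $\mathbf{d}'_{q-1}$ is obtained from $\mathbf{d}_q$ by deleting $d_{qj}$, shifting the remaining $d_{qk}$ with $k > j$ down by one index, and setting $p'_{q-1} = p_q - d_{qj}$. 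These supply the $f$-type generators (for $q = j,\ldots,n-1$).

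Counting gives $(j-1) + (n-j) = n-1$ relations on each side over a polynomial ring in $n-1$ variables, and each residue coincides (up to a nonzero scalar) with a defining relation of the target ring, so the isomorphism will follow. The main work is just careful bookkeeping of the index shift for $k \ge j$ in $\mathbf{d}'_{q-1}$ and the sign in $p'_{q-1}$; the mildly delicate points are the boundary case $q = j$ (where the range $j \le k \le q-1$ is empty) and the case $j = 1$ (where $q - 1 = 0$ invokes the convention $f^{\mathbf{d}'_0}_{0,\,2n-3} = y_1^2 \cdots y_{n-1}^2$). No genuine obstacle is anticipated.
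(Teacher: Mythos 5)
Your proposal is correct and follows essentially the same route as the paper's own proof: reduce each generator $f^{\mathbf{d}_i}_{i,2n-1-i}$ modulo $x_j$, split into the cases $i<j$ (yielding the $g$-type generators with $b'_{ik}=d_{ik}$) and $i\geq j$ (yielding the $f$-type generators with $\mathbf{d}'_{i-1}=(d_{ik},\,p_i-d_{ij})_{k\neq j}$), after observing that the extra generator $(x_1\cdots x_n)x_j$ dies in the quotient. The bookkeeping you outline, including the $j=1$ convention case, matches the paper's computation.
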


Assume that $R^{\mathbf{D}_{(n)}}(j)$ is Artinian.
Then, from Lemmas~\ref{lemma:key} and \ref{lemma:D1.5} one obtains the following exact sequence 
\begin{equation} \label{eq:exactD1}
0 \rightarrow R^{\mathbf{D}_{(n)}} \xrightarrow{\times x_j} R^{\mathbf{D}_{(n)}}(j) \rightarrow R^{(\mathbf{B}'_{(n-1)},\mathbf{D}'_{(n-1)})}(j) \rightarrow 0
\end{equation}
for some $\mathbf{B}'_{(n-1)}$ and $\mathbf{D}'_{(n-1)}$. 
Here, we regard the variables in the ring $R^{(\mathbf{B}'_{(n-1)},\mathbf{D}'_{(n-1)})}(j)$ as $x_1,\ldots,\widehat{x_{j}},\ldots,x_n$.

We now discuss an additive basis of the ring $R^{(\mathbf{B}_{(n)},\mathbf{D}_{(n)})}(j)$.
Fix a positive integer $j$ with $1 \leq j \leq n+1$.
For $1 \leq i \leq n$ and $i+1 \leq k \leq 2(n+1)-1-i$, we define
$$
\beta^{(j)}_{i,k}:=\alpha^{D_{n+1}}_{i,k}(x_1,\ldots,x_{j-1},0,x_j,\ldots,x_n),
$$
where the right hand side $\alpha^{D_{n+1}}_{i,k}(y_1,\ldots,y_{n+1})$ means the positive root for type $D_{n+1}$ in the variables $y_1,\ldots,y_{n+1}$.
The list of $\beta^{(j)}_{i,k}$ is shown in Figures~\ref{picture:PositiveRootTypeBandD} and \ref{picture:PositiveRootTypeB}.
Note that the $\beta^{(n+1)}_{i,k}$ is nothing but the positive root for type $B_n$.

\begin{figure}[h]
\begin{center}
\begin{picture}(500,110)
\hspace{-25pt}
\put(0,100){\framebox(30,10){\tiny $x_1-x_2$}} 
\put(35,85){$\ddots$}
\put(35,100){$\cdots$} 
\put(55,100){\framebox(50,10){\tiny $x_1-x_{j-1}$}} 
\put(80,85){$\vdots$} 
\put(55,70){\framebox(50,10){\tiny $x_{j-2}-x_{j-1}$}} 
\put(105,100){\framebox(20,10){\tiny $x_1$}} 
\put(115,85){$\vdots$} 
\put(105,70){\framebox(20,10){\tiny $x_{j-2}$}} 
\put(105,60){\framebox(20,10){\tiny $x_{j-1}$}} 
\put(125,100){\framebox(40,10){\tiny $x_1-x_j$}} 
\put(145,85){$\vdots$} 
\put(125,70){\framebox(40,10){\tiny $x_{j-2}-x_j$}} 
\put(125,60){\framebox(40,10){\tiny $x_{j-1}-x_j$}} 
\put(125,50){\framebox(40,10){\tiny $-x_j$}} 
\put(165,100){\framebox(50,10){\tiny $x_1-x_{j+1}$}} 
\put(185,85){$\vdots$} 
\put(165,70){\framebox(50,10){\tiny $x_{j-2}-x_{j+1}$}} 
\put(165,60){\framebox(50,10){\tiny $x_{j-1}-x_{j+1}$}} 
\put(165,50){\framebox(50,10){\tiny $-x_{j+1}$}} 
\put(165,40){\framebox(50,10){\tiny $x_j-x_{j+1}$}} 
\put(220,100){$\cdots$} 
\put(220,70){$\cdots$} 
\put(220,60){$\cdots$} 
\put(220,50){$\cdots$} 
\put(220,40){$\cdots$} 
\put(220,25){$\ddots$} 
\put(240,100){\framebox(40,10){\tiny $x_1-x_{n}$}} 
\put(260,85){$\vdots$} 
\put(240,70){\framebox(40,10){\tiny $x_{j-2}-x_{n}$}} 
\put(240,60){\framebox(40,10){\tiny $x_{j-1}-x_{n}$}} 
\put(240,50){\framebox(40,10){\tiny $-x_{n}$}} 
\put(240,40){\framebox(40,10){\tiny $x_j-x_{n}$}} 
\put(260,25){$\vdots$} 
\put(240,10){\framebox(40,10){\tiny $x_{n-1}-x_{n}$}} 
\put(280,100){\framebox(50,10){\tiny $x_1+x_{n-1}$}} 
\put(300,85){$\vdots$} 
\put(280,70){\framebox(50,10){\tiny $x_{j-2}+x_{n-1}$}} 
\put(280,60){\framebox(50,10){\tiny $x_{j-1}+x_{n-1}$}} 
\put(280,50){\framebox(50,10){\tiny $x_{n-1}$}} 
\put(280,40){\framebox(50,10){\tiny $x_j+x_{n-1}$}} 
\put(300,33){$\cdot$} 
\put(300,31){$\cdot$} 
\put(300,29){$\cdot$} 
\put(280,20){\framebox(50,10){\tiny $x_{n-2}+x_{n-1}$}} 
\put(335,100){$\cdots$} 
\put(335,70){$\cdots$} 
\put(335,60){$\cdots$} 
\put(335,50){$\cdots$} 
\put(331,40){$\cdot$} 
\put(333,40){$\cdot$}
\put(335,40){$\cdot$}
\put(335,34){$\cdot$} 
\put(340,36){$\cdot$}
\put(345,38){$\cdot$}
\put(350,100){\framebox(40,10){\tiny $x_1+x_j$}} 
\put(370,85){$\vdots$} 
\put(350,70){\framebox(40,10){\tiny $x_{j-2}+x_j$}} 
\put(350,60){\framebox(40,10){\tiny $x_{j-1}+x_j$}} 
\put(350,50){\framebox(40,10){\tiny $x_j$}} 
\put(390,100){\framebox(20,10){\tiny $x_1$}} 
\put(400,85){$\vdots$} 
\put(390,70){\framebox(20,10){\tiny $x_{j-2}$}} 
\put(390,60){\framebox(20,10){\tiny $x_{j-1}$}} 
\put(410,100){\framebox(50,10){\tiny $x_1+x_{j-1}$}} 
\put(435,85){$\vdots$} 
\put(410,70){\framebox(50,10){\tiny $x_{j-2}+x_{j-1}$}} 
\put(465,85){$\cdot$}
\put(470,87.5){$\cdot$}
\put(475,90){$\cdot$}
\put(465,100){$\cdots$} 
\put(480,100){\framebox(30,10){\tiny $x_1+x_2$}} 
\end{picture}
\end{center}
\vspace{-10pt}
\caption{The list of $\beta^{(j)}_{i,k}$ for $1 \leq j \leq n$.}
\label{picture:PositiveRootTypeBandD}
\end{figure} 

\begin{figure}[h]
\begin{center}
\begin{picture}(440,80)
\put(400,70){\framebox(40,10){\tiny $x_1+x_2$}}
\put(335,70){$\cdots$}
\put(160,70){\framebox(40,10){\tiny $x_1-x_n$}} 
\put(200,70){\framebox(40,10){\tiny $x_1$}} 
\put(240,70){\framebox(40,10){\tiny $x_1+x_n$}} 
\put(95,70){$\cdots$}
\put(0,70){\framebox(40,10){\tiny $x_1-x_2$}} 
\put(55,55){$\ddots$}
\put(218,55){$\vdots$}
\put(375,55){$\cdot$} 
\put(380,57.5){$\cdot$}
\put(385,60){$\cdot$}

\put(320,40){\framebox(40,10){\tiny $x_i+x_{i+1}$}} 
\put(160,40){\framebox(40,10){\tiny $x_i-x_n$}}
\put(295,40){$\cdots$} 
\put(200,40){\framebox(40,10){\tiny $x_i$}}
\put(135,40){$\cdots$} 
\put(240,40){\framebox(40,10){\tiny $x_i+x_n$}}
\put(80,40){\framebox(40,10){\tiny $x_i-x_{i+1}$}}
\put(135,25){$\ddots$}
\put(218,25){$\vdots$}
\put(295,25){$\cdot$} 
\put(300,27.5){$\cdot$}
\put(305,30){$\cdot$}
\put(240,10){\framebox(40,10){\tiny $x_{n-1}+x_n$}}
\put(200,10){\framebox(40,10){\tiny $x_{n-1}$}}
\put(160,10){\framebox(40,10){\tiny $x_{n-1}-x_n$}}
\put(200,0){\framebox(40,10){\tiny $x_n$}}
\end{picture}
\end{center}
\vspace{-10pt}
\caption{The list of $\beta^{(n+1)}_{i,k}$.}
\label{picture:PositiveRootTypeB}
\end{figure} 
For $m=(m_1,\ldots,m_n)$ with $0 \leq m_i \leq 2(n-i)+1$ for any $i \in [n]$, we define 
$$
w_m^{(j)}:=\prod_{i=1}^{n} \beta_{i,2n+1-i}^{(j)} \cdot \beta_{i,2n-i}^{(j)} \cdots \beta_{i,2n+1-i-m_i+1}^{(j)}
$$
with the convention $\beta_{i,2n+1-i}^{(j)} \cdot \beta_{i,2n-i}^{(j)} \cdots \beta_{i,2n+1-i-m_i+1}^{(j)}=1$ whenever $\m_i=0$.
Then, we have the following proposition whose proof is the similar to the proof of Proposition~\ref{proposition:basisAFlag}.

\begin{proposition} \label{proposition:D2}
Let $1 \leq j \leq n+1$.
Assume that $R^{(\mathbf{B}_{(n)},\mathbf{D}_{(n)})}(j)$ is Artinian. 
Then, the classes $w_m^{(j)}$ with $0 \leq \m_i \leq 2(n-i)+1$, form a basis for $R^{(\mathbf{B}_{(n)},\mathbf{D}_{(n)})}(j)$ over $\R$.
\end{proposition}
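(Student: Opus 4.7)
Because $R^{(\mathbf{B}_{(n)},\mathbf{D}_{(n)})}(j)$ is Artinian and generated by $n$ homogeneous polynomials whose degrees are $2,4,\ldots,2n$, Lemma~\ref{lemma:wellknown} identifies it as a complete intersection with Hilbert series $\prod_{i=1}^n(1+t+\cdots+t^{2(n-i)+1})$. This coincides with the generating function for the multi-indices $m=(m_1,\ldots,m_n)$ with $0\le m_i\le 2(n-i)+1$ weighted by $\sum_i m_i=\deg w_m^{(j)}$, so it suffices to show that the classes $w_m^{(j)}$ are linearly independent.

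I would argue by induction on $n$, in close parallel with the proof of Proposition~\ref{proposition:basisAFlag}. Assume a nontrivial relation $\sum_m c_m w_m^{(j)}=0$ and let $m_1^\circ$ denote the minimum value of $m_1$ for which some $c_m\neq 0$. A direct calculation of $\prod_{k=2}^{2n}\beta^{(j)}_{1,k}$ from the defining substitution $\beta^{(j)}_{i,k}=\alpha^{D_{n+1}}_{i,k}(x_1,\ldots,x_{j-1},0,x_j,\ldots,x_n)$ shows that one of the defining generators factors through this product times a single linear form $\alpha$: explicitly, $g^{\mathbf{b}_1}_{1,2n}=b_{11}(x_1+x_n)\prod_{k=2}^{2n}\beta^{(j)}_{1,k}$ for $2\le j\le n$, $g^{\mathbf{b}_1}_{1,2n}=b_{11}\,x_1\prod_{k=2}^{2n}\beta^{(n+1)}_{1,k}$ for $j=n+1$, and $f^{\mathbf{d}_0}_{0,2n-1}=(-1)^n x_n\prod_{k=2}^{2n}\beta^{(1)}_{1,k}$ for $j=1$. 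Passing to the quotient yields the identity $\alpha\cdot\prod_{k=2}^{2n}\beta^{(j)}_{1,k}=0$ for the corresponding auxiliary factor $\alpha$.

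Multiplying the relation by $\alpha\cdot\prod_{k=2}^{2n-m_1^\circ-1}\beta^{(j)}_{1,k}$, every term with $m_1>m_1^\circ$ picks up all of $\beta^{(j)}_{1,2},\ldots,\beta^{(j)}_{1,2n}$ (possibly with duplication) together with $\alpha$ and hence vanishes by the above identity, while the surviving $m_1=m_1^\circ$ terms acquire the common first-row factor $g''_n:=\alpha\cdot\prod_{k=2,\,k\neq 2n-m_1^\circ}^{2n}\beta^{(j)}_{1,k}$. Setting $g'_n:=\beta^{(j)}_{1,2n-m_1^\circ}$, the product $g'_n\cdot g''_n$ equals (up to a nonzero scalar) the defining generator used above, so Lemma~\ref{lemma:key} produces an injection $R^{(\mathbf{B}_{(n)},\mathbf{D}_{(n)})}(j)/(\beta^{(j)}_{1,2n-m_1^\circ})\hookrightarrow R^{(\mathbf{B}_{(n)},\mathbf{D}_{(n)})}(j)$ via multiplication by $g''_n$. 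Hence the residual $m_1=m_1^\circ$ portion of the relation descends to a nontrivial linear dependence in $R^{(\mathbf{B}_{(n)},\mathbf{D}_{(n)})}(j)/(\beta^{(j)}_{1,2n-m_1^\circ})$.

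The last step, and the main obstacle, is to identify this quotient with a ring $R^{(\mathbf{B}',\mathbf{D}')}(j')$ in the variables $y_k:=x_{k+1}$ (with $j'=j-1$), and to match the descended products with classes $w_{m'}^{(j-1)}$ of the smaller ring. The freed factor $\beta^{(j)}_{1,2n-m_1^\circ}$ is one of $x_1$, $x_1-x_\ell$ or $x_1+x_\ell$ depending on its position in the first row; a case-by-case computation paralleling Lemma~\ref{lemma:D1.5} shows that imposing its vanishing annihilates the leading generator $g^{\mathbf{b}_1}_{1,2n}$ (or $f^{\mathbf{d}_0}_{0,2n-1}$) and reindexes each of the remaining generators under $y_k:=x_{k+1}$ into a generator of the appropriate $g^{\mathbf{b}'}$ or $f^{\mathbf{d}'}$ form, yielding a ring of shape $R^{(\mathbf{B}',\mathbf{D}')}(j-1)$; Lemma~\ref{lemma:5-18} (or a direct substitution in its spirit) then identifies the reduced factors $\beta^{(j)}_{i,k}$ with $\beta^{(j-1)}_{i-1,k-1}$, so the descended relation is again of the type handled by the inductive hypothesis, producing the desired contradiction. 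The trickiest subcase is $j=1$ with a substitution $x_1=\pm x_\ell$, where $f^{\mathbf{d}_0}_{0,2n-1}$ does not vanish outright but reduces to $y_{\ell-1}^2$ times a smaller $f^{\mathbf{d}_0}$; a further appeal to Lemma~\ref{lemma:key} (or a direct non-zerodivisor argument) is required to clean up this case before the induction can close.
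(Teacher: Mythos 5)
Your overall route is the paper's own: reduce to linear independence via Lemma~\ref{lemma:wellknown} and the degree count $2,4,\ldots,2n$; induct on $n$; take the minimal $m_1^{\circ}$; multiply the relation by the complementary first-row factors together with the auxiliary linear form so that all terms with $m_1>m_1^{\circ}$ die against the factored generator; and then use Lemma~\ref{lemma:key} with $g'_n=\beta^{(j)}_{1,2n-m_1^{\circ}}$ to push the surviving part of the relation into the quotient by that linear form. Your factorizations are correct, and your separate treatment of $j=n+1$ (auxiliary factor $x_1$ rather than $x_1+x_n$) is in fact more accurate than the paper's blanket statement in its Case~1, so up to this point the proposal matches the paper's proof.

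The flaw is in your endgame for $j=1$. When $j=1$ the defining substitution is $y_1=0$, so every first-row form $\beta^{(1)}_{1,k}$ is $\pm x_m$ for some $m\in[n]$; no form of the shape $x_1\pm x_\ell$ occurs in that row, so the ``trickiest subcase'' you describe (setting $x_1=\pm x_\ell$ and having $f^{\mathbf{d}_0}_{0,2n-1}$ survive as $x_\ell^2$ times a smaller $f^{\mathbf{d}_0}$, requiring a further appeal to Lemma~\ref{lemma:key}) never arises. What actually happens is simpler: one quotients by some $x_\ell$, under which $f^{\mathbf{d}_0}_{0,2n-1}=x_1^2\cdots x_n^2$ vanishes outright, and the remaining generators $f^{\mathbf{d}_{q-1}}$ reduce (exactly as in Lemma~\ref{lemma:D1.5}) to $g^{\mathbf{b}'}$-type generators when $q-1<\ell$ and to $f^{\mathbf{d}'}$-type generators when $q-1\geq\ell$. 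Consequently the quotient is identified with $R^{(\mathbf{B}'_{(n-1)},\mathbf{D}'_{(n-1)})}(\ell)$ in the variables $x_1,\ldots,\widehat{x_\ell},\ldots,x_n$, not with a ring of index $j'=j-1$ in the variables $x_2,\ldots,x_n$ (for $j=1$ the index $j-1=0$ is not even defined); your blanket claim ``$j'=j-1$, $y_k=x_{k+1}$'' is only valid for $2\leq j\leq n+1$. This matters because the inductive hypothesis must be invoked at the index $\ell$, which can be any element of $[n]$ — which is fine, since the statement is proved for all admissible indices simultaneously — and because the reduced factors $\beta^{(1)}_{i,k}$ ($i\geq2$) become the $\beta^{(\ell)}$-forms in the deleted-variable set, which is what lets the descended relation be read as $\sum c_m w^{(\ell)}_{(m_2,\ldots,m_n)}=0$. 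Once you replace your $j=1$ paragraph with this (easier) identification, the argument closes exactly as in the paper.
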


\subsection{Proof of Theorem~\ref{theorem:basisD}} \label{subsect:A.2}

As discussed in Section~\ref{section:typeD}, we prove Theorem~\ref{theorem:basisD} for the case when $h=h^{(n)}_{\max}$ by using the exact sequences \eqref{eq:exactD3-1}, \eqref{eq:exactD3-2}, and \eqref{eq:exactD3-3}.
Noting that each cokernel in \eqref{eq:exactD3-1}, \eqref{eq:exactD3-2}, and \eqref{eq:exactD3-3} is isomorphic to 
\begin{align*}
&R^{\mathbf{D}_{(n)}}_{h_j}/(x_1-x_j) \cong R^{\mathbf{D}'_{(n-1)}} \ \ \ {\rm for} \ 2 \leq j \leq n, \\
&R^{\mathbf{D}_{(n)}}_{\tilde{h}_n}/(x_1+x_n) \cong R^{\mathbf{D}'_{(n-1)}}(n-1), \\
&R^{\mathbf{D}_{(n)}}_{\tilde{h}_j}/(x_1+x_{2n-j}) \cong R^{\mathbf{D}'_{(n-1)}}(2n-j-1) \ \ \ {\rm for} \ n+1 \leq j \leq 2n-2, 
\end{align*}
we derive the exact sequences 
\begin{align} 
&0 \rightarrow R^{\mathbf{D}_{(n)}}_{h_{j-1}} \xrightarrow{\times (x_1-x_j)} R^{\mathbf{D}_{(n)}}_{h_j} \rightarrow R^{\mathbf{D}'_{(n-1)}} \rightarrow 0 \ \ \ {\rm for} \ 2 \leq j \leq n, \label{eq:exactD3-1'} \\
&0 \rightarrow R^{\mathbf{D}_{(n)}}_{h_{n}} \xrightarrow{\times (x_1+x_n)} R^{\mathbf{D}_{(n)}}_{\tilde{h}_n} \rightarrow R^{\mathbf{D}'_{(n-1)}}(n-1) \rightarrow 0, \label{eq:exactD3-2'} \\
&0 \rightarrow R^{\mathbf{D}_{(n)}}_{\tilde{h}_{j-1}} \xrightarrow{\times (x_1+x_{2n-j})} R^{\mathbf{D}_{(n)}}_{\tilde{h}_j} \rightarrow R^{\mathbf{D}'_{(n-1)}}(2n-j-1) \rightarrow 0 \ \ \ {\rm for} \ n+1 \leq j \leq 2n-2, \label{eq:exactD3-3'}
\end{align}
for some $\mathbf{D}'_{(n-1)}$. 
Remark that the variables in the rings $R^{\mathbf{D}'_{(n-1)}}$, $R^{\mathbf{D}'_{(n-1)}}(n-1)$, and $R^{\mathbf{D}'_{(n-1)}}(2n-j-1)$ are regarded as $x_2,\ldots,x_n$.
We now give a proof of Theorem~\ref{theorem:basisD}. 

\begin{proof}[Proof of Theorem~\ref{theorem:basisD}]
By Lemma~\ref{lemma:D1}, it is enough to prove that the elements $v_m^{(h^{(n)}_{\max})}$ with $0 \leq \m_i \leq h^{(n)}_{\max}(i)-i$, form an additive basis for the ring $R^{\mathbf{D}_{(n)}}$ with the assumption that $R^{\mathbf{D}_{(n)}}$ is Artinian. 
We prove this by induction on $n$.
The base case $n=2$ is clear. 
Now we assume that $n>2$ and the claim holds for $n-1$, and any $\mathbf{D}'_{(n-1)}=(\mathbf{d}'_i \in \R^{i+1} \mid 1 \leq i \leq n-2)$.

\smallskip

\noindent
\textit{Claim~1 The set $\{v_m^{(h_1)} \mid 0 \leq \m_i \leq h_{1}(i)-i\}$ forms a basis of $R^{\mathbf{D}_{(n)}}_{h_1}$.} 

We put $m'=(m'_1,\ldots,m'_{n-1})$ with $m'_i=m_{i+1}$ for $i \in [n-1]$. 
Then we have $0 \leq m'_i \leq h^{(n)}_{\max}(i+1)-(i+1) = h^{(n-1)}_{\max}(i)-i $ for $i \in [n-2]$, and $0 \leq m'_{n-1} \leq h^{(n)}_{\max}(n)-1-n = h^{(n-1)}_{\max}(n-1)-(n-1)$.
We see that the isomorphism $R^{\mathbf{D}_{(n)}}_{h_1} \cong R^{\mathbf{D}'_{(n-1)}}$ in \eqref{eq:D3-1} maps $v_{m}^{(h_1)}$ to $v_{m'}^{(h^{(n-1)}_{\max})}$.
Since the first and the last entries $\ell_1$ and $\ell_n$ of $\ell:=h_1-m$ satisfy
\begin{align*}
&\ell_1=1-m_1=1 \leq n, \\
&\ell_n=h^{(n)}_{\max}(n)-1-m_n<h^{(n)}_{\max}(n)=2n-1,
\end{align*}
$\ell'$ is defined and the procedure $\ell \rightarrow \ell'$ is obtained from $\eqref{eq:proc1r}$, and hence
\begin{align*}
\ell'=(h^{(n)}_{\max}(2)-m_2-1, \ldots, h^{(n)}_{\max}(n-1)-m_{n-1}-1, h^{(n)}_{\max}(n)-1-m_n-1)=h^{(n-1)}_{\max}-m'.
\end{align*}
By Lemma~\ref{lemma:5-18} we obtain $v_{m}^{(h_1)}(x_1,\ldots,x_n) = v_{m'}^{(h^{(n-1)}_{\max})}(x_2,\ldots,x_n)$ under the isomorphism $R^{\mathbf{D}_{(n)}}_{h_1} \cong R^{\mathbf{D}'_{(n-1)}}$.
However, $R^{\mathbf{D}'_{(n-1)}}$ has a basis $\{v_{m'}^{(h^{(n-1)}_{\max})} \mid 0 \leq \m'_i \leq h^{(n-1)}_{\max}(i)-i\}$ by the inductive assumption on $n$, which proves Claim~1. 

\smallskip

\noindent
\textit{Claim~2 The set $\{v_m^{(h_j)} \mid 0 \leq \m_i \leq h_{j}(i)-i\}$ forms a basis of $R^{\mathbf{D}_{(n)}}_{h_j}$ for $1 \leq j \leq n$.} 

We prove Claim~2 by induction on $j$.
The base case $j=1$ is nothing but Claim~1.
Now we assume that $j>1$ and Claim~2 holds for $j-1$.
Consider the exact sequence in \eqref{eq:exactD3-1'}.
By the inductive hypothesis on $j$, a basis of $R^{\mathbf{D}_{(n)}}_{h_{j-1}}$ is given by 
\begin{align} 
\{v_s^{(h_{j-1})} \mid 0 \leq s_i \leq h_{j-1}(i)-i \ (1 \leq i \leq n) \}.  \label{eq:D3Claim1-2'} 
\end{align}
On the other hand, by the inductive assumption on $n$ we can take as a basis of $R^{\mathbf{D}'_{(n-1)}}$ in the variables $x_2,\ldots,x_n$ the set 
\begin{align*} 
\{v_{m'}^{(h^{(n-1)}_{\max})}(x_2,\ldots,x_n) \mid 0 \leq \m'_i \leq h^{(n-1)}_{\max}(i)-i \ (1 \leq i \leq n-1) \}. 
\end{align*}
By an argument similar to Claim~1 the set above is the image of a set 
\begin{align} 
\{v_m^{(h_j)} \mid m_1=0, 0 \leq \m_i \leq h^{(n)}_{\max}(i)-i \ (2 \leq i \leq n-1), 0 \leq \m_n \leq h^{(n)}_{\max}(n)-1-n \} \label{eq:D3Claim1-1} 
\end{align}
under the surjection $R^{\mathbf{D}_{(n)}}_{h_j} \twoheadrightarrow R^{\mathbf{D}'_{(n-1)}}$. 
It then follows from the exact sequence in \eqref{eq:exactD3-1'} 
that a basis of $R^{\mathbf{D}_{(n)}}_{h_j}$ can be obtained by combining the sets \eqref{eq:D3Claim1-1} and \eqref{eq:D3Claim1-2'}, except that the
set \eqref{eq:D3Claim1-2'} must be multiplied by $x_1-x_j$.
We conclude that
\begin{align*}
\big(\eqref{eq:D3Claim1-2'} \times (x_1-x_j) \big) \cup \eqref{eq:D3Claim1-1} 
\end{align*}
is an additive basis of $R^{\mathbf{D}_{(n)}}_{h_j}$.
In order to see that this set coincides with the set given in the statement of Claim~2, it suffices to show that 
\begin{align*}
v_m^{(h_j)}= (x_1-x_j) \cdot v_s^{(h_{j-1})}
\end{align*}
for $m_1=s_1+1$ and $m_i=s_i \ (2 \leq i \leq n)$ where $s$ runs over the condition in \eqref{eq:D3Claim1-2'}.
We put $\ell=h_j-m$ and $u=h_{j-1}-s$. 
Then one has $h_{j-1}-s=h_j-m$, and hence
\begin{align} \label{eq:D3Claim1-3}
\alpha^{(\ell)}_{i,j}=\alpha^{(u)}_{i,j}.
\end{align}
Since the first and the last entries $\ell_1$ and $\ell_n$ of $\ell$ satisfy 
$\ell_1=j-m_1 \leq j \leq n$ and 
$\ell_n=h^{(n)}_{\max}(n)-1-m_n<h^{(n)}_{\max}(n)=2n-1$, respectively, 
$\ell'$ is defined and the procedure $\ell \rightarrow \ell'$ is obtained from $\eqref{eq:proc1r}$. Thus we have $\alpha^{(\ell)}_{1,j} = \alpha_{1,j} = x_1-x_j$.
This together with \eqref{eq:D3Claim1-3} leads us to the equality 
\begin{align*}
v_m^{(h_j)}=& (\alpha^{(\ell)}_{1,j} \cdot \alpha^{(\ell)}_{1,j-1} \cdots \alpha^{(\ell)}_{1,j-m_1+1}) \cdot (\prod_{i=2}^n \alpha^{(\ell)}_{i,h^{(n)}_{\max}(i)} \cdots \alpha^{(\ell)}_{i,h^{(n)}_{\max}(i)-m_i+1}) \\
=& \big((x_1-x_j) \cdot \alpha^{(u)}_{1,j-1} \cdots \alpha^{(u)}_{1,(j-1)-s_1+1}\big) \cdot \big(\prod_{i=2}^n \alpha^{(u)}_{i,h^{(n)}_{\max}(i)} \cdots \alpha^{(u)}_{i,h^{(n)}_{\max}(i)-m_i+1}\big) \\
=& (x_1-x_j) \cdot v_s^{(h_{j-1})},
\end{align*} 
as desired.

\smallskip

\noindent
\textit{Claim~3 The set $\{v_m^{(\tilde{h}_n)} \mid 0 \leq \m_i \leq \tilde{h}_{n}(i)-i\}$ forms a basis of $R^{\mathbf{D}_{(n)}}_{\tilde{h}_n}$.} 

Consider the exact sequence in \eqref{eq:exactD3-2'}.
It then follows from Claim~2 that the set 
\begin{align} 
\{v_s^{(h_{n})} \mid 0 \leq s_i \leq h_{n}(i)-i \ (1 \leq i \leq n) \}  \label{eq:D3Claim2-2'} 
\end{align}
forms an additive basis for $R^{\mathbf{D}_{(n)}}_{h_{n}}$. 
We prove the following subclaim.

\smallskip

\noindent
\textit{Subclaim~3 The image of a set 
\begin{align}
\{v_m^{(\tilde{h}_n)} \mid m_n=0, 0 \leq \m_1 \leq n-1, 0 \leq \m_i \leq h^{(n)}_{\max}(i)-i \ (2 \leq i \leq n) \} \label{eq:D3Claim2-1}
\end{align}
under the surjection $R^{\mathbf{D}_{(n)}}_{\tilde{h}_n} \twoheadrightarrow R^{\mathbf{D}'_{(n-1)}}(n-1)$ forms an additive basis for $R^{\mathbf{D}'_{(n-1)}}(n-1)$.} 

\smallskip

If we prove the subclaim above, then Claim~3 holds. 
In fact, as in the argument of Claim~2, 
the exact sequence in \eqref{eq:exactD3-2'} deduces that 
\begin{align*}
\big(\eqref{eq:D3Claim2-2'} \times (x_1+x_n) \big) \cup \eqref{eq:D3Claim2-1}
\end{align*}
is an additive basis of $R^{\mathbf{D}_{(n)}}_{\tilde{h}_n}$.
However, one can see from an argument similar to Claim~2 that 
$$
(x_1+x_n) \cdot v_s^{(h_{n})} = v_m^{(\tilde{h}_n)}
$$
for $m_n=s_n+1$ and $m_i=s_i \ (1 \leq i \leq n-1)$ where $s$ runs over the condition in \eqref{eq:D3Claim2-2'}.
In fact, since the last entry of $\ell=\tilde{h}_n-m$ is strictly less than $2n-1$ by $m_n>0$, we have the procedure $\ell \xrightarrow{\eqref{eq:proc1r}} \ell'$ and hence $\alpha^{(\tilde{h}_n-m)}_{n,2n-1}=x_1+x_n$.
Thus Claim~3 follows from Subclaim~3.

We now prove Subclaim~3. 
In what follows, the image of $v_m^{(\tilde{h}_n)}$ under the surjection $R^{\mathbf{D}_{(n)}}_{\tilde{h}_n} \twoheadrightarrow R^{\mathbf{D}'_{(n-1)}}(n-1)$ is also denoted by the same notation $v_m^{(\tilde{h}_n)}$.
Consider the exact sequence \eqref{eq:exactD1} for $n-1$ 
\begin{equation} \label{eq:exactD3Claim2}
0 \rightarrow R^{\mathbf{D}'_{(n-1)}} \xrightarrow{\times x_n} R^{\mathbf{D}'_{(n-1)}}(n-1) \rightarrow R^{(\mathbf{B}''_{(n-2)},\mathbf{D}''_{(n-2)})}(n-1) \rightarrow 0,
\end{equation}
where we may regard the variables in the rings $R^{\mathbf{D}'_{(n-1)}}, R^{\mathbf{D}'_{(n-1)}}(n-1)$ as $x_2,\ldots,x_n$, and the variables in the ring $R^{(\mathbf{B}''_{(n-2)},\mathbf{D}''_{(n-2)})}(n-1)$ as $x_2,\ldots,x_{n-1}$.
From Proposition~\ref{proposition:D2} one can take as a basis of $R^{(\mathbf{B}''_{(n-2)},\mathbf{D}''_{(n-2)})}(n-1)$ the set  
\begin{align} \label{eq:D3Claim2-3} 
\{w_{m'}^{(n-1)}(x_2,\ldots,x_{n-1}) \mid 0 \leq \m'_i \leq 2(n-2-i)+1 \ (1 \leq i \leq n-2) \}. 
\end{align}
We first show that an element $w_{m'}^{(n-1)}(x_2,\ldots,x_{n-1})$ of the set \eqref{eq:D3Claim2-3} is the image of $v_m^{(\tilde{h}_n)}$ for $m_n=0$, $m_1=0$, and  $m_i=m'_{i-1} \ (2 \leq i \leq n-1)$ under the surjection $R^{\mathbf{D}'_{(n-1)}}(n-1) \twoheadrightarrow R^{(\mathbf{B}''_{(n-2)},\mathbf{D}''_{(n-2)})}(n-1)$.
Put $\ell:=\tilde{h}_n-m=(n,h^{(n)}_{\max}(2)-m_2,\ldots,h^{(n)}_{\max}(n-1)-m_{n-1},2n-1)$.
Then $\ell'$ is undefined by \eqref{eq:proc2} and hence
\begin{align*}
v_m^{(\tilde{h}_n)}&=\prod_{i=2}^{n-1} \alpha^{D_n}_{i,h^{(n)}_{\max}(i)}(x_1,\ldots,x_n) \cdots \alpha^{D_n}_{i,h^{(n)}_{\max}(i)-m_i+1}(x_1,\ldots,x_n) \\
&=\prod_{i=1}^{n-2} \alpha^{D_{n-1}}_{i,h^{(n-1)}_{\max}(i)}(x_2,\ldots,x_{n-1},x_n) \cdots \alpha^{D_{n-1}}_{i,h^{(n-1)}_{\max}(i)-m'_i+1}(x_2,\ldots,x_{n-1},x_n) \ \ \ ({\rm by \ Lemma~\ref{lemma:5-18}}). 
\end{align*}
Thus, the surjection $R^{\mathbf{D}'_{(n-1)}}(n-1) \twoheadrightarrow R^{\mathbf{D}'_{(n-1)}}(n-1)/(x_n) \cong R^{(\mathbf{B}''_{(n-2)},\mathbf{D}''_{(n-2)})}(n-1)$ sends $v_m^{(\tilde{h}_n)}$ to 
$$
\prod_{i=1}^{n-2} \alpha^{D_{n-1}}_{i,h^{(n-1)}_{\max}(i)}(x_2,\ldots,x_{n-1},0) \cdots \alpha^{D_{n-1}}_{i,h^{(n-1)}_{\max}(i)-m'_i+1}(x_2,\ldots,x_{n-1},0)=w_{m'}^{(n-1)}(x_2,\ldots,x_{n-1}), 
$$
as desired. 
Hence, the surjection $R^{\mathbf{D}'_{(n-1)}}(n-1) \twoheadrightarrow R^{(\mathbf{B}''_{(n-2)},\mathbf{D}''_{(n-2)})}(n-1)$ sends a set 
\begin{equation} \label{eq:D3Claim2-3'}
\{ v_m^{(\tilde{h}_n)} \mid m_n=0, m_1=0, 0 \leq \m_i \leq h^{(n)}_{\max}(i)-i \ (2 \leq i \leq n-1) \}
\end{equation}
to the set in \eqref{eq:D3Claim2-3}.

On the other hand, by the inductive assumption on $n$ the ring $R^{\mathbf{D}'_{(n-1)}}$ has an additive basis 
\begin{align} \label{eq:D3Claim2-4} 
\{v_{t}^{(h^{(n-1)}_{\max})}(x_2,\ldots,x_n) \mid 0 \leq t_i \leq h^{(n-1)}_{\max}(i)-i \ (1 \leq i \leq n-1) \}.
\end{align}
By the exactness of the sequence \eqref{eq:exactD3Claim2} we obtain that the union of \eqref{eq:D3Claim2-4} multiplied by $x_n$ and \eqref{eq:D3Claim2-3'} is an
additive basis of $R^{\mathbf{D}'_{(n-1)}}(n-1)$. 
In order to prove Subclaim~3, it suffices to prove that $v_m^{(\tilde{h}_n)}$ is congruent to
\begin{align*}
x_n \cdot v_{t}^{(h^{(n-1)}_{\max})}(x_2,\ldots,x_n) \ \ \ {\rm mod} \ x_1+x_n
\end{align*}
up to a non-zero scalar multiplication for $m_n=0$, $m_1=t_{n-1}+1$, and $m_i=t_{i-1} \ (2 \leq i \leq n-1)$ where $t$ runs over the condition in \eqref{eq:D3Claim2-4}. 
In what follows, we prove that 
\begin{equation} \label{eq:D3Claim2-5}
v_m^{(\tilde{h}_{n})}(x_1,\ldots,x_n) \equiv \pm 2x_n \cdot v_t^{(h^{(n-1)}_{\max})}(x_2,\ldots,x_n) \ \ \ \ \ {\rm mod} \ x_1+x_n.
\end{equation}
Put $\ell=\tilde{h}_n-m$. 
Since the first and the last entries $\ell_1$ and $\ell_n$ of $\ell$ satisfy
$\ell_1=n-m_1 \leq n-1$ and
$\ell_n=h^{(n)}_{\max}(n)-m_n=2n-1$
, one has $\ell \xrightarrow{\eqref{eq:proc2r}} \ell'$ and hence 
$$
\ell'=(h^{(n)}_{\max}(2)-m_2-1,\ldots,h^{(n)}_{\max}(n-1)-m_{n-1}-1,2n-2-m_1)=h^{(n-1)}_{\max}-t.
$$
It follows from Lemma~\ref{lemma:5-18} that
\begin{align} \label{eq:D3Claim2-6}
\alpha^{(\ell)}_{i+1,j}(x_1,\ldots,x_n) = \alpha^{(\ell')}_{i,j-1}(x_2,\ldots,x_n) 
\end{align}
for $1 \leq i \leq n-2$ and $i+1 < j \leq 2n-1-(i+1)$.
One also has
\begin{equation} \label{eq:D3Claim2-7}
\alpha^{(\ell)}_{1,n}=\alpha_{1,n}=x_1-x_n
\end{equation}
by the definition of $\alpha^{(\ell)}_{i,j}$. 
In order to prove \eqref{eq:D3Claim2-5}, we first prove  
\begin{align} \label{eq:D3Claim2-8} 
\alpha^{(\ell)}_{1,n-p}(x_1,\ldots,x_n) \equiv \pm \alpha^{(\ell')}_{n-1,h^{(n-1)}_{\max}(n-1)-p+1}(x_2,\ldots,x_n) \ \ \ {\rm mod} \ x_1+x_n 
\end{align}
for $1 \leq p < m_1$.
Let us consider the two procedures starting at $\ell$ and $\ell'$:
\begin{align*}
&\ell=\ell^{(1)} \rightarrow \ell^{(2)} \rightarrow \cdots; \\
&\ell'=\ell'^{(1)} \rightarrow \ell'^{(2)} \rightarrow \cdots. 
\end{align*}
Note that $\ell'^{(i)}=\ell^{(i+1)}$ if they can be defined.
One can see from the inductive argument that the procedure 
$$
\ell'=\ell'^{(1)} \rightarrow \ell'^{(2)} \rightarrow \cdots \rightarrow \ell'^{(p)} \rightarrow \ell'^{(p+1)} \rightarrow \cdots \rightarrow \ell'^{(m_1-1)} \rightarrow \ell'^{(m_1)}
$$
can be defined and each step is either \eqref{eq:proc1r} or \eqref{eq:proc3r}. 
In fact, the last entry $\ell'^{(p)}_{n-p}$ of $\ell'^{(p)}$ for $1 \leq p < m_1$ is inductively given by
$$
\ell'^{(p)}_{n-p}=2n-2-m_1-(p-1) < 2n-2-p-(p-1) =2(n-p)-1.
$$
Hence, the procedure 
$$
\ell=\ell^{(1)} \xrightarrow{\eqref{eq:proc2r}} \ell^{(2)} \xrightarrow{\eqref{eq:proc1r} \ {\rm or} \ \eqref{eq:proc3r}} \cdots \xrightarrow{\eqref{eq:proc1r} \ {\rm or} \ \eqref{eq:proc3r}} \ell^{(m_1)} \xrightarrow{\eqref{eq:proc1r} \ {\rm or} \ \eqref{eq:proc3r}} \ell^{(m_1+1)}
$$
can be defined. 
This derives
\begin{align*}
\alpha^{(\ell)}_{1,n-p}(x_1,\ldots,x_n)=\begin{cases}
x_{p+1} \ \ \ &{\rm if} \ \ell^{(p+1)} \xrightarrow{\eqref{eq:proc3r}} \ell^{(p+2)} \\
x_1-x_{p+1} \ \ \ &{\rm if} \ \ell^{(p+1)} \xrightarrow{\eqref{eq:proc1r}} \ell^{(p+2)} 
\end{cases} 
\end{align*}
and
\begin{align*}
\alpha^{(\ell')}_{n-1,h^{(n-1)}_{\max}(n-1)-p+1}(x_2,\ldots,x_n) =&\alpha^{(\ell')}_{n-1,2(n-1)-p}(x_2,\ldots,x_n) \\
=&\begin{cases}
x_{p+1} \ \ \ &{\rm if} \ \ell'^{(p)} \xrightarrow{\eqref{eq:proc3r}} \ell'^{(p+1)} \\
x_{p+1}+x_n \ \ \ &{\rm if} \ \ell'^{(p)} \xrightarrow{\eqref{eq:proc1r}} \ell'^{(p+1)} 
\end{cases} 
\end{align*}
for $1 \leq p < m_1$.
Thus, we obtain \eqref{eq:D3Claim2-8}.
It then follows from \eqref{eq:D3Claim2-7}, \eqref{eq:D3Claim2-8}, and \eqref{eq:D3Claim2-6} that 
\begin{align*}
\hspace{-30pt}
v_m^{(\tilde{h}_n)} =&\alpha^{(\ell)}_{1,n}(x_1,\ldots,x_n) \cdot \alpha^{(\ell)}_{1,n-1}(x_1,\ldots,x_n) \cdots \alpha^{(\ell)}_{1,n-m_1+1}(x_1,\ldots,x_n) \\
&\cdot \big(\prod_{i=2}^{n-1} \alpha^{(\ell)}_{i,h^{(n)}_{\max}(i)}(x_1,\ldots,x_n) \cdots \alpha^{(\ell)}_{i,h^{(n)}_{\max}(i)-m_{i}+1}(x_1,\ldots,x_n)\big) \\
=&(x_1-x_n) \cdot \alpha^{(\ell)}_{1,n-1}(x_1,\ldots,x_n) \cdots \alpha^{(\ell)}_{1,n-m_1+1}(x_1,\ldots,x_n) \\
&\cdot \big(\prod_{i=1}^{n-2} \alpha^{(\ell)}_{i+1,h^{(n)}_{\max}(i+1)}(x_1,\ldots,x_n) \cdots \alpha^{(\ell)}_{i+1,h^{(n)}_{\max}(i+1)-m_{i+1}+1}(x_1,\ldots,x_n)\big) \\
\equiv & \pm (x_1-x_n) \cdot \alpha^{(\ell')}_{n-1,h^{(n-1)}_{\max}(n-1)}(x_2,\ldots,x_n) \cdots \alpha^{(\ell')}_{n-1,h^{(n-1)}_{\max}(n-1)-(m_1-1)+1}(x_2,\ldots,x_n) \\
&\cdot \big(\prod_{i=1}^{n-2} \alpha^{(\ell')}_{i,h^{(n-1)}_{\max}(i)}(x_2,\ldots,x_n) \cdots \alpha^{(\ell')}_{i,h^{(n-1)}_{\max}(i)-m_{i+1}+1}(x_2,\ldots,x_n)\big) \ \ \ {\rm mod} \ x_1+x_n \ \ \ ({\rm by \ Lemma~\ref{lemma:5-18}})\\
=& \pm (x_1-x_n) \cdot v_{t}^{(h^{(n-1)}_{\max})}(x_2,\ldots,x_n) \ \ \ ({\rm because} \ t_{n-1}=m_1-1, t_i=m_{i+1} \ {\rm for} \ 1 \leq i \leq n-2)\\
=& \pm2x_n \cdot v_{t}^{(h^{(n-1)}_{\max})}(x_2,\ldots,x_n) \ \ \ {\rm mod} \ x_1+x_n.
\end{align*}

We proved \eqref{eq:D3Claim2-5} and hence Subclaim~3 holds. 
This completes the proof of Claim~3.

\smallskip

\noindent
\textit{Claim~4 The set $\{v_m^{(\tilde{h}_j)} \mid 0 \leq \m_i \leq \tilde{h}_{j}(i)-i\}$ forms a basis of $R^{\mathbf{D}_{(n)}}_{\tilde{h}_j}$ for $n \leq j \leq 2n-2$.} 

We prove Claim~4 by induction on $j$.
As the base case $j=n$, it is exactly Claim~3. 
We now assume that $j>n$ and Claim~4 holds for $j-1$.
Consider the exact sequence in \eqref{eq:exactD3-3'} in this case.
By similar considerations as in the previous claim, the image of a set 
\begin{align}
\{v_m^{(\tilde{h}_j)} \mid m_1=0, 0 \leq \m_i \leq h^{(n)}_{\max}(i)-i \ (2 \leq i \leq n) \}  \label{eq:D3Claim3-1}
\end{align}
under the surjection $R^{\mathbf{D}_{(n)}}_{\tilde{h}_j} \twoheadrightarrow R^{\mathbf{D}'_{(n-1)}}(2n-j-1)$ forms an additive basis for the ring $R^{\mathbf{D}'_{(n-1)}}(2n-j-1)$.
In fact, a similar argument using the exact sequence
\begin{equation} \label{eq:exactD3Claim3}
0 \rightarrow R^{\mathbf{D}'_{(n-1)}} \xrightarrow{\times x_{2n-j}} R^{\mathbf{D}'_{(n-1)}}(2n-j-1) \rightarrow R^{(\mathbf{B}''_{(n-2)},\mathbf{D}''_{(n-2)})}(2n-j-1) \rightarrow 0
\end{equation}
yields the claim in this case.
More specifically, by Proposition~\ref{proposition:D2} we know that the ring $R^{(\mathbf{B}''_{(n-2)},\mathbf{D}''_{(n-2)})}(2n-j-1)$ has a basis
\begin{align*} 
\{w_{m'}^{(2n-j-1)}(x_2,\ldots,\widehat{x_{2n-j}},\ldots,x_n) \mid 0 \leq \m'_i \leq 2(n-2-i)+1 \ (1 \leq i \leq n-2) \},
\end{align*}
and the set above is the image of  
\begin{equation} \label{eq:D3Claim3-3'}
\{ v_m^{(\tilde{h}_j)} \mid m_1=0, m_n=0, 0 \leq \m_i \leq h^{(n)}_{\max}(i)-i \ (2 \leq i \leq n-1) \}
\end{equation}
under the surjection $R^{\mathbf{D}'_{(n-1)}}(2n-j-1) \twoheadrightarrow R^{(\mathbf{B}''_{(n-2)},\mathbf{D}''_{(n-2)})}(2n-j-1)$, by an argument similar to Claim~3.
Furthermore, by the inductive hypothesis on $n$ the ring $R^{\mathbf{D}'_{(n-1)}}$ has a basis 
\begin{align} \label{eq:D3Claim3-4} 
\{v_{t}^{(h^{(n-1)}_{\max})}(x_2,\ldots,x_n) \mid 0 \leq t_i \leq h^{(n-1)}_{\max}(i)-i \ (1 \leq i \leq n-1) \},
\end{align}
and we can prove, in a similar manner to Claim~3, that 
\begin{equation*} 
v_m^{(\tilde{h}_{j})}(x_1,\ldots,x_n) \equiv -x_{2n-j} \cdot v_t^{(h^{(n-1)}_{\max})}(x_2,\ldots,x_n) \ \ \ \ \ {\rm mod} \ x_1+x_{2n-j}
\end{equation*}
for $m_1=0$, $m_i=t_{i-1} \ (2 \leq i \leq n-1)$, and $m_n=t_{n-1}+1$ where $t$ runs over the condition in \eqref{eq:D3Claim3-4}.
Therefore, by the exactness of the sequence \eqref{eq:exactD3Claim3} we conclude that the image of the set \eqref{eq:D3Claim3-1} forms an additive basis for $R^{\mathbf{D}'_{(n-1)}}(2n-j-1)$.

On the other hand, by the inductive hypothesis on $j$, we know that $R^{\mathbf{D}_{(n)}}_{\tilde{h}_{j-1}}$ has an additive basis 
\begin{align} 
\{v_s^{(\tilde{h}_{j-1})} \mid 0 \leq s_i \leq \tilde{h}_{j-1}(i)-i \ (1 \leq i \leq n) \}.  \label{eq:D3Claim3-2'} 
\end{align}
Note that we have $(x_1+x_{2n-j}) \cdot v_s^{(\tilde{h}_{j-1})} = v_m^{(\tilde{h}_j)}$ for $m_1=s_1+1$ and $m_i=s_i \ (2 \leq i \leq n)$ where $s$ runs over the condition in \eqref{eq:D3Claim3-2'}, by similar manipulations to the previous claim. 
A similar argument using the exact sequence in \eqref{eq:exactD3-3'} yields the result in Claim~4.

\smallskip

Now the proof of Theorem~\ref{theorem:basisD} follows immediately from Lemma~\ref{lemma:D1} and  the special case $j=2n-2$ of Claim~4 because $R^{\mathbf{D}_{(n)}}_{\tilde{h}_{2n-2}}$ is equal to $R^{\mathbf{D}_{(n)}}$ which is isomorphic to $H^*(G/B)$ whenever $\mathbf{D}_{(n)}=\{ \mathbf{d}_i=(1, \ldots, 1, 1) \in \R^{i+1} \mid i \in [n-1] \}$. 
\end{proof}

\bigskip

\end{document}